\theoremstyle{definition}
\newtheorem{theorem}{Theorem}[section]
\newtheorem{proposition}[theorem]{Proposition}
\newtheorem{lemma}[theorem]{Lemma}
\newtheorem{remark}[theorem]{Remark}
\newtheorem{definition}[theorem]{Definition}
\newtheorem{notation}[theorem]{Notation}
\newtheorem*{acknowledgement}{Acknowledgement}
\numberwithin{equation}{section}
\numberwithin{figure}{section}
\newcommand{\dx}{\mathrm{d}}
\newcommand{\e}{\mathrm{e}}
\newcommand{\E}{\mathds{E}}
\newcommand{\setone}{\mathds{1}}
\renewcommand{\rho}{\varrho}
\renewcommand{\phi}{\varphi}
\DeclareMathOperator{\var}{var}
\DeclareMathOperator{\cov}{cov}
\DeclareMathOperator{\dist}{dist}
\title[Decay of correlations in 1D lattice systems]{Decay of correlations in 1D lattice systems of continuous spins and long-range interaction}
\date{September 1, 2013}
\author{Georg Menz }
\address{Georg Menz\\Stanford University}
\email{gmenz@stanford.edu}
\author{Robin Nittka}
\address{Robin Nittka\\Max Planck Institute for Mathematics in the Sciences\\Inselstr. 22\\04103 Leipzig\\Germany}
\email{robin.nittka@gmail}
\subjclass[2000]{Primary 82C26; secondary 82B20; 60K35; 26D10.}
\keywords {lattice systems, continuous spin, long-range interaction, decay of correlations, Gibbs measure, phase transition, logarithmic Sobolev inequality}
\begin{document}

\begin{abstract}
  We consider an one-dimensional lattice system of unbounded and continuous spins. The Hamiltonian consists of a perturbed strictly-convex single-site potential and with longe-range interaction. We show that if the interactions decay algebraically of order $2+\alpha$, $\alpha>0$ then the correlations also decay algebraically of order $2+ \tilde \alpha$ for some $\tilde \alpha > 0$. For the argument we generalize a method from Zegarlinski from finite-range to infinite-range interaction to get a preliminary decay of correlations, which is improved to the correct order by a recursive scheme based on Lebowitz inequalities. Because the decay of correlations yields the uniqueness of the Gibbs measure, the main result of this article yields that the on-phase region of a continuous spin system is at least as large as for the Ising model.
\end{abstract}

\maketitle

\section{Introduction and main results} 

We consider a lattice system of unbounded and continuous spins on the one-dimensional lattice $\mathds{Z}$. The formal Hamiltonian $H: \mathds{R}^{\mathds{Z}} \to \mathds{R}$ of the system is given by  
\begin{equation}
  \label{e_d_Hamiltonian}
	H(x) = \sum_{i \in \mathds{Z} } \psi_i(x_i) + \frac{1}{2} \sum_{i,j \in \mathds{Z}} M_{ij} x_i x_j.  
\end{equation}
We assume that the single-site potentials $\psi_i : \mathds{R} \to \mathds{R}$ are perturbed convex. This means that there is a splitting $\psi_i= \psi_i^c + \psi_i^b$ such that for all $i \in \mathds{Z}$ and $z \in \mathds{R}$
\begin{equation}\label{e_cond_psi}
  (\psi_i^c )'' (z) \geq 0 \qquad \mbox{and} \qquad |\psi_i^b (z)| + \left| (\psi_i^b)' (z) \right| \lesssim 1.
\end{equation}
Here, we used the convention that (see also Definition~\ref{def:dep} from below)
\begin{equation*}
  a \lesssim b \qquad :\Leftrightarrow \mbox{there is a uniform constant $C>0$ such that $a \leq C b$}.
 \end{equation*}
Moreover, we assume that 
\begin{itemize}
\item  the interaction is symmetric i.e.~
  \begin{equation}
    \label{e_ass_sym}
    M_{ij}=M_{ji} \qquad \mbox{ for all $i, j \in \mathds{Z}$,}
  \end{equation}
  
\item and the matrix $M= (M_{ij})$ is strictly diagonal dominant i.e.~for some $\delta > 0$ it holds for any $i \in \mathds{Z}$
\begin{equation}\label{e_strictly_diag_dominant}
  \sum_{j \in \mathds{Z}, j \neq i} |M_{ij}| + \delta \le M_{ii}. 
\end{equation}
\end{itemize}

One of the most interesting phenomenon in statistical mechanics are phase
transitions. In the case of the one-dimensional Ising model
(i.e.~the spin values $x_i$ are either $-1$ or $1$) there is
a unique infinite-volume Gibbs measure, if the interaction
decays sufficiently fast i.e. 
 \begin{equation} \label{e_decay_ising}
|M_{ij}| \lesssim \frac{1}{|i-j|^{2 + \alpha}+1}  
\end{equation}
uniformly in $i,j \in \mathds{Z}$ for some $\alpha>0$ (cf.~\cite{Dobru_1,Dobru_2,Ruelle_1,CaMaOrPi}). Hence, there is no phase transition for such decay of interaction. However, if the interaction decays sufficiently slowly i.e.~$-1 <\alpha \leq 0$, it is known that the one-dimensional Ising model has a phase transition (cf.~\cite{Dyson,FroeSpenc,CaFeMePr,Imbrie}). \medskip

Though one would expect similar results to hold in the case of continuous and unbounded spins, surprisingly few facts are known. This is surprising because one could think that technically the task becomes easier: Due to the continuity of the spins one can use analytic tools like differentiation and convexity. However, in our situation there also is a technical drawback: Due to the unboundedness of spin values one looses compactness. Hence, many arguments known from the bounded discrete case do not carry over.   
\medskip

In fact, the only rigorous results obtained to date are for finite-range interaction, i.e.~there is an integer $R$ such that 
\begin{equation*}
M_{ij}=0 \qquad  \qquad \mbox{if } \quad |i-j|\geq R .
\end{equation*}
 Zegarlinski showed that the spin-spin correlation function decays exponentially fast (cf.~\cite[Lemma~4.5.]{Zeg96}). Nobuo Yoshida pointed out in his survey article~\cite{Yos_2} that such a decay immediately yields the uniqueness of the infinite-volume Gibbs measure (cf.~conditions (DS1), (DS2), and (DS3) in~\cite{Yos01}). Therefore there is no phase transition on the one-dimensional lattice provided the range of the interaction is finite.
\medskip

In this article, we extend the study of Zegarlinski and Yoshida to the case of infinite-range interaction. We will show that one-dimensional lattice systems of continuous spins do not have a phase transition in the same region of interaction as is known in the Ising model. Hence, we assume that the interaction decays as in~\eqref{e_decay_ising} and deduce the uniqueness of the infinite-volume Gibbs measure. \medskip

The hard ingredient of the analysis is 
to deduce that the spin-spin correlation function decays like
  \begin{equation}\label{e_decay_spin_spin_correlation}
    \frac{1}{|i-j|^{2 + \tilde \alpha}+1}
  \end{equation}
for some $0< \tilde \alpha < \alpha$. This estimate can be understood as an extension of Zegarlinski`s correlation estimate to infinite-range interaction (cf.~\cite[Lemma~4.5.]{Zeg96} and Theorem~\ref{p_mr_decay} from below). \medskip

Once the decay of correlations is established (see Theorem~\ref{p_mr_decay} from below), one can apply a recent result of one of the author (cf.~\cite[Theorem 1.14]{OR_rev}) which establishes the uniqueness of the Gibbs state (see~Theorem~\ref{p_unique_Gibbs}). We want to point out that the decay of correlations also yields a logarithmic Sobolev inequality (LSI) for the finite-volume Gibbs measure with a uniform constant in the system size and the boundary condition (cf.~\cite[Theorem 1.7]{OR_rev} and Theorem~\ref{p_mr_LSI} from below). Therefore one can say that this article generalizes Zegarlinski's main result (cf.~\cite[Theorem 4.1]{Zeg96}) from finite range to infinite-range interaction. \medskip

Now let us have a closer look at how the decay of correlations is deduced in this work. 
\begin{notation}
Let $S\subset \mathds{Z}$ be an arbitrary subset of $\mathds{Z}$. For convenience, we write $x^S$ as a shorthand for $(x_i)_{i \in S}$. 
\end{notation}
\begin{definition}[Tempered spin-values]
Given a finite subset $\Lambda\subset \mathds{Z}$, we call the spin values $x^{\mathds{Z}\backslash \Lambda}$ tempered, if for all $i \in \Lambda$
\begin{equation*}
  \sum_{j \in \mathds{Z} \backslash {\Lambda}} |M_{ij}| \ |x_j| < \infty.
\end{equation*} 
\end{definition}
\begin{definition}[Finite-volume Gibbs measure]
  Let $\Lambda$ be a finite subset of the lattice $Z$ and let $x^{\mathds{Z} \backslash \Lambda}$ be a tempered state. We call the measure $\mu_{\Lambda}( dx^{\Lambda})$ the finite-volume Gibbs measure associated to the Hamiltonian $H$ with boundary values $x^{\mathds{Z} \backslash \Lambda}$, if it is a probability measure on the space $\mathds{R}^{\Lambda}$ given by the density
\begin{equation}
  \label{e_d_Gibbs_measure}
	\mu_{\Lambda}(dx^\Lambda) = \frac{1}{Z_{\mu_\Lambda}} \e^{-H(x^\Lambda,x^{\mathds{Z} \backslash \Lambda} )} \dx x^\Lambda .
\end{equation} 
Here, $Z_{\mu_\Lambda}$ denotes the normalization constant sucht that $\mu_{\Lambda}$ is a probability measure. If there is no ambiguity, we also may write $Z$ to denote the normalization constant of a probability measure. Note that $\mu_\Lambda$ depends on the spin values $x^{\mathds{Z} \backslash \Lambda}$ outside of the set $\Lambda$.
\end{definition}
 The main result of this article is:
\begin{theorem}[Decay of spin-spin correlations]\label{p_mr_decay}
Assume that the Hamiltonian $H:\mathds{R}^{\mathds{Z}} \to \mathds{R} $ given by~\eqref{e_d_Hamiltonian} satisfies the assumptions~\eqref{e_cond_psi}~-~\eqref{e_decay_ising}.\newline
Then there exist a constant $0 < \tilde \alpha < \alpha$, where
$\alpha$ is given by~\eqref{e_decay_ising}, such that 
\begin{equation*}
  |\cov_{\mu_\Lambda } (x_i,x_j)| \lesssim \ \frac{1}{|i-j|^{2 + \tilde \alpha}+1}
\end{equation*}
uniformly in $\Lambda \subset \mathds{Z}$ and $i,j \in \Lambda$.
\end{theorem}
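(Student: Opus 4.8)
The plan is to follow the two-stage strategy announced in the introduction: first produce a \emph{preliminary} decay of correlations by generalizing Zegarlinski's finite-range argument to infinite range, then \emph{bootstrap} this to the sharp order $2+\tilde\alpha$ via a recursion built on Lebowitz-type inequalities. For the preliminary step, the key observation is that under the strict diagonal dominance~\eqref{e_strictly_diag_dominant} together with the bounded perturbation~\eqref{e_cond_psi}, each single-site conditional measure $\mu_{\{i\}}(dx_i \mid x^{\mathds{Z}\setminus\{i\}})$ satisfies a logarithmic Sobolev (equivalently, spectral-gap / covariance) estimate with a constant uniform in the conditioning. One then sets up a Gibbs-sampler / Dobrushin-type comparison: differentiating $\E_{\mu_\Lambda}[x_j]$ with respect to a boundary spin $x_k$ (or comparing two boundary conditions) produces a bound of the form $|\partial_{x_k}\E_{\mu_\Lambda}[x_j]| \le \sum_{m} A_{jm}\,|M_{mk}|$ where $A = (\text{Id} - B)^{-1}$ and $B_{jm} \lesssim |M_{jm}|$ for $j\ne m$. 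Because $\sum_{m\ne j}|M_{jm}| \le M_{jj} - \delta$ and the diagonal contributes a strictly contractive factor coming from the convexity plus gap, the Neumann series converges and one reads off that $\cov_{\mu_\Lambda}(x_i,x_j)$ decays at least as fast as the matrix $(\text{Id}-B)^{-1}B$ entry $(i,j)$ — which, for algebraically decaying $M$ of order $2+\alpha$, is again algebraic but of some smaller, possibly much smaller, order $\tilde\alpha_0>0$. This is essentially the content of an infinite-range version of \cite[Lemma~4.5]{Zeg96}, and I would isolate it as a separate lemma.

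The second stage improves $\tilde\alpha_0$ up toward $\alpha$. Here I would use the Helffer--Sjöstrand / Lebowitz inequality representation: for the (convexified) system, $\cov_{\mu_\Lambda}(x_i,x_j) \le \sum_{k,l} \E_{\mu_\Lambda}\big[ (\text{something positive}) \big]$, which after the standard manipulation yields an inequality of the schematic form
\begin{equation*}
  |\cov_{\mu_\Lambda}(x_i,x_j)| \lesssim \sum_{k\ne l} |M_{kl}|\, |\cov(x_i,x_k)|\,|\cov(x_l,x_j)| + (\text{diagonal terms}).
\end{equation*}
Feeding in the \emph{a priori} bound $|\cov(x_i,x_j)| \lesssim (|i-j|^{2+\beta}+1)^{-1}$ with $\beta = \tilde\alpha_0$ and splitting the sum over $k,l$ according to which of $|i-k|$, $|k-l|$, $|l-j|$ dominates, one shows that the right-hand side is itself $\lesssim (|i-j|^{2+\beta'}+1)^{-1}$ for some $\beta' = F(\beta)$ with $\beta' > \beta$ as long as $\beta < \alpha$ (the convolution of an order-$(2+\alpha)$ sequence with an order-$(2+\beta)$ sequence has order $2+\min(\alpha,\beta)$ up to the boundary case, and the three-fold structure is what buys the strict gain). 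Iterating the map $F$ finitely many — or countably many, with a fixed-point argument — times pushes the exponent up to some $\tilde\alpha \in (0,\alpha)$, uniformly in $\Lambda$, which is exactly the assertion. Throughout, one must keep every constant uniform in $\Lambda$ and in the (tempered) boundary condition; the bounded perturbation $\psi^b$ is absorbed via the usual Holley--Stroock / bounded-perturbation principle for the covariance estimates.

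The main obstacle I expect is making the bootstrap recursion genuinely \emph{gain}: the naive convolution estimate for two algebraically decaying kernels of orders $2+\alpha$ and $2+\beta$ only returns order $2+\min(\alpha,\beta)$, so a lazy application of Lebowitz gives $\beta' = \beta$ and no improvement. The gain has to come from the three-factor structure $|M_{kl}|\,|\cov(x_i,x_k)|\,|\cov(x_l,x_j)|$ together with the fact that at least one of the three ``legs'' is short on any term contributing near the diagonal scale $|i-j|$ — essentially a Simon--Lieb / finite-energy-volume type inequality where summing the interaction over a block of size $\sim|i-j|^\theta$ costs a factor $|i-j|^{-\theta(1+\alpha)}$, and choosing $\theta$ optimally yields a strictly larger exponent. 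Getting the combinatorics of this split clean, and checking that the resulting recursion for $\beta_n$ has $\sup_n \beta_n = \tilde\alpha < \alpha$ rather than stalling, is the delicate part; the rest (the single-site LSI, the Neumann series, the Helffer--Sjöstrand formula, uniformity in the boundary data) is standard once assembled in the right order.
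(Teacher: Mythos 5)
Your two-stage plan — preliminary Zegarlinski-style decay, then bootstrap via a Lebowitz-type recursion — matches the paper's outline, but the mechanisms you propose for each stage differ substantially from the paper's, and the second stage as written contains a logical gap.

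\textbf{First stage.} You propose a Dobrushin/Gibbs-sampler contraction: bound $|\partial_{x_k}\E_{\mu_\Lambda}[x_j]|$ by a Neumann series $(\mathrm{Id}-B)^{-1}$ with $B_{jm}\lesssim|M_{jm}|$ and read off the decay from the entries of the resolvent. The paper does not go this route. It first doubles variables to pass to the conditional measure $\mu_{\Lambda,q}$ (whose single-site potentials are symmetric and whose variables are centered), then dominates the covariance by that of the ferromagnetic system $\mu_{\Lambda,q,|M|}$ via a second-GKS / ghost-spin argument, and finally obtains the preliminary decay by truncating the interaction to range $R=|i-j|^{1-\varepsilon}$ between $i$ and $j$ and running Zegarlinski's $\tanh$-telescoping identity on the truncated measure, with a cut-off error controlled by moment estimates. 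A Dobrushin contraction can in principle deliver the result more directly, but verifying the Dobrushin/Wasserstein interdependence matrix condition uniformly in the boundary condition for unbounded spins under only \eqref{e_cond_psi}--\eqref{e_strictly_diag_dominant} is precisely the kind of step the paper's authors avoid; you would need to argue it from scratch, and it is not clearly weaker than what the paper does.

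\textbf{Second stage — this is where the real gap is.} You feed in an a priori bound $|\cov(x_i,x_j)|\lesssim(|i-j|^{2+\beta}+1)^{-1}$ with $\beta=\tilde\alpha_0$ coming from the preliminary stage. But the preliminary decay that the Zegarlinski-type argument (and the paper's Proposition \ref{p_mr_decay_reduced_measure}) produces is only $|i-j|^{-\tilde\alpha_0}$, \emph{two full orders weaker} than $|i-j|^{-(2+\tilde\alpha_0)}$; the paper says this explicitly. So either your Dobrushin first stage already yields the order $2+\tilde\alpha_0$ — in which case the bootstrap is superfluous, since any $\tilde\alpha_0>0$ already proves the theorem and iterating the map $F$ adds nothing — or the first stage yields only $|i-j|^{-\tilde\alpha_0}$, in which case your bootstrap, built on convolving kernels of order $2+\alpha$ and $2+\beta$, cannot produce the factor $|i-j|^{-2}$ out of thin air. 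The paper's bootstrap is engineered to do exactly that: the Simon--Lieb recursion is iterated $l\sim\log|i-j|$ times with a \emph{fixed} block radius $a$; the preliminary decay is used only to make $\sum_k J_{lk}\le\tilde c<1$ (a summability, not a decay, statement); the main term $T$ then decays like $\tilde c^l\sim|i-j|^{-(2+\hat\alpha)}$ geometrically in $l$, while the remainder $R$ inherits the $2+\alpha$ decay from the interaction $M_{kn}$ itself and a pigeonhole step that forces at least one of the $l$ ``jumps'' to be of length $\gtrsim|i-j|/l$. Your proposed fixed-point iteration $\beta_{n+1}=F(\beta_n)$ is a different and, as you yourself note, genuinely uncertain mechanism; the convolution bound returns $\min(\alpha,\beta)$ and you have no proof that the ``three-factor structure'' buys a strict gain.

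\textbf{Missing reduction.} You invoke the Lebowitz inequality ``for the (convexified) system,'' but Lebowitz and the Simon--Lieb consequence require a ferromagnetic interaction. The paper's Lemma \ref{p:attractive_interact_dominates} (domination of $\mu_{\Lambda,q}$ by $\mu_{\Lambda,q,|M|}$, via GKS) is there precisely to replace $M_{kl}$ by $|M_{kl}|$; without it neither Lebowitz nor Simon--Lieb apply. Your sketch skips this step entirely. You also conflate the Helffer--Sjöstrand representation (a Brascamp--Lieb-type identity, convexity-based) with the Lebowitz inequality (a Gaussian-domination / correlation inequality, ferromagnetism-based); they are different tools and are used differently.
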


The proof of Theorem~\ref{p_mr_decay} is stated in Section~\ref{s_covariance_estimate}. It is motivated by the approach for the Ising model and consists of three major steps (cf.~\cite{FroeSpenc}):
\begin{itemize}
\item In the first step we show that it suffices to estimate the covariances of an associated Gibbs measure with better properties than the original Gibbs measure. More precisely, the associated Gibbs measure which will be easier to study is one whose Hamiltonian has attractive interactions and symmetric single-site potentials.
\item 
In the second step we show that the spin-spin correlation function of the associated Gibbs measure decays like (see in Theorem~\ref{p_mr_decay_reduced_measure} below)
\begin{equation}\label{e_preliminary_decay_covariances}
  \frac{1}{|i-j|^{\tilde \alpha}}.
\end{equation}
For this purpose we extend the strategy of Zegarlinski~\cite{Zeg96} from finite-range interaction to infinite-range interaction by a perturbation argument. We also improve some moment estimates of~\cite{Zeg96}, which is crucial for our argument. Note that the decay of~\eqref{e_preliminary_decay_covariances} is two orders less than claimed in Theorem~\ref{p_mr_decay}. 
\item In the third step we improve the suboptimal decay of~\eqref{e_preliminary_decay_covariances} to the desired order in Theorem~\ref{p_mr_decay} by applying an iterative scheme based on Lebowitz inequalities (cf.~\cite[Section~3]{Simon}). 
\end{itemize}

\begin{remark}\label{r_linear_term}
 Note that the structural assumptions~\eqref{e_cond_psi}~-~\eqref{e_strictly_diag_dominant} on the Hamiltonian $H$ are invariant under adding a linear term
  \begin{equation*}
    \sum_{i \in \mathds{Z}} x_ib_i
  \end{equation*}
for arbitrary $b_i \in \mathds{R}$. Therefore the constant in the decay of correlations of Theorem~\ref{p_mr_decay} is invariant under adding a linear term to the Hamiltonian. Such a linear term can be interpreted as a field acting on the system. If the coefficients $b_i$ are chosen randomly, one calls the linear term random field.
\end{remark}

Let us we turn to the uniqueness of the infinite-volume Gibbs measure. A direct consequence of the decay of correlations of Theorem~\ref{p_mr_decay} that the finite-volume Gibbs measure $\mu_{\Lambda}$ satisfies a LSI uniformly in $\Lambda$ and in the boundary values $x^{\mathds{Z} \backslash \Lambda}$ i.e
\begin{theorem}(Uniform logarithmic Sobolev inequality) \label{p_mr_LSI}
  We assume that the formal Hamiltonian $H:\mathds{R}^{\mathds{Z}} \to \mathds{R} $ given by~\eqref{e_d_Hamiltonian} satisfies the Assumptions~\eqref{e_cond_psi}~-~\eqref{e_decay_ising}.\newline
Then the finite-volume Gibbs measure satisfies a LSI uniformly in $\Lambda$ and $x^{\mathds{Z} \backslash \Lambda}$ i.e.~for all functions $f \geq 0$ 
  \begin{equation}\label{e_definition_of_LSI}
  \int f \log f \ d \mu - \int f  d\mu  \log \left( \int f  d\mu \right) \leq \frac{1}{2 \varrho} \int \frac{|\nabla f|^2}{f} d\mu,  
  \end{equation}
where the constant $\varrho>0$ is independent of $\Lambda$ and $x^{\mathds{Z} \backslash \Lambda}$.
\end{theorem}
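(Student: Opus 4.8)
The plan is to derive Theorem~\ref{p_mr_LSI} from the covariance estimate of Theorem~\ref{p_mr_decay} by invoking the abstract criterion of \cite[Theorem 1.7]{OR_rev}, which states that a uniform decay of correlations of the stated algebraic order implies a uniform LSI for the finite-volume Gibbs measures. The reduction therefore amounts to checking that the hypotheses of that theorem are met in our setting: the single-site potentials are perturbed convex in the sense of \eqref{e_cond_psi}, the interaction matrix $M$ is symmetric and strictly diagonally dominant by \eqref{e_ass_sym}--\eqref{e_strictly_diag_dominant}, and the decay rate $2+\tilde\alpha$ with $\tilde\alpha>0$ supplied by Theorem~\ref{p_mr_decay} is summable enough to fall within the admissible range in \cite{OR_rev}. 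Since $\tilde\alpha>0$, the tails $\sum_{j}|\cov_{\mu_\Lambda}(x_i,x_j)|$ are bounded uniformly in $i$ and $\Lambda$, which is the quantitative input needed.

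Concretely, first I would recall the two-scale / covariance-to-LSI machinery: one uses the decomposition of the Gibbs measure into conditional measures on single sites (or small blocks) together with a marginal, shows the conditional measures satisfy an LSI with a uniform constant because their single-site Hamiltonians are perturbed strictly convex (bounded perturbation of a uniformly convex potential, via the Bakry--\'Emery criterion and the Holley--Stroock perturbation lemma), and then controls the covariance interaction term by the algebraic decay estimate. The off-diagonal covariance bound $|\cov_{\mu_\Lambda}(x_i,x_j)|\lesssim (|i-j|^{2+\tilde\alpha}+1)^{-1}$ enters precisely at the stage where one estimates the operator norm (or a weighted $\ell^1\!-\!\ell^\infty$ norm) of the matrix of covariances, and the summability $\sum_{k\neq 0}(|k|^{2+\tilde\alpha}+1)^{-1}<\infty$ guarantees that this norm is finite uniformly in $\Lambda$ and in the boundary condition $x^{\mathds{Z}\setminus\Lambda}$. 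Combining the uniform conditional LSI with this uniform bound through the two-scale criterion of \cite{OR_rev} yields \eqref{e_definition_of_LSI} with a constant $\varrho>0$ depending only on the structural constants in \eqref{e_cond_psi}--\eqref{e_decay_ising}.

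The main obstacle is not in the present theorem — its proof is essentially a citation once Theorem~\ref{p_mr_decay} is in hand — but rather lies entirely in establishing Theorem~\ref{p_mr_decay} itself, in particular verifying that the hypotheses of \cite[Theorem 1.7]{OR_rev} are literally satisfied (e.g.\ that the decay order $2+\tilde\alpha$ required there is no stronger than what Theorem~\ref{p_mr_decay} delivers, and that uniformity in the boundary values is genuinely obtained rather than merely uniformity in $\Lambda$). One should also note, as in Remark~\ref{r_linear_term}, that adding a linear field does not affect the argument, so the LSI constant is likewise field-independent. Thus the proof of Theorem~\ref{p_mr_LSI} consists of: (i) stating the abstract result \cite[Theorem 1.7]{OR_rev}; (ii) checking its structural hypotheses against \eqref{e_cond_psi}--\eqref{e_strictly_diag_dominant}; (iii) feeding in the covariance decay from Theorem~\ref{p_mr_decay} and observing the required summability; and (iv) concluding the uniform LSI \eqref{e_definition_of_LSI}.
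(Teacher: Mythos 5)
Your proposal matches the paper exactly: the authors omit the proof of Theorem~\ref{p_mr_LSI} and state that it is a direct application of \cite[Theorem 1.7]{OR_rev} once the covariance decay of Theorem~\ref{p_mr_decay} is established. Your additional commentary on the two-scale mechanism and the summability of the covariance tails correctly describes why the hypotheses of that cited theorem are met, but does not alter the route.
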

The proof of Theorem~\ref{p_mr_LSI} consists of a direct application of~\cite[Theorem 1.7]{OR_rev} and therefore is omitted in this article. The fact that a uniform LSI in combination with decay of correlation yields the uniqueness of the Gibbs state is already known from the case of finite-range interaction (cf.~\cite{Roy07},~\cite{Zitt}, and~\cite{Yos01}). Therefore it is not surprising that one can generalize this statement to infinite-range interaction (cf.~\cite[Theorem 1.14]{OR_rev}). Hence, a direct application of \cite[Theorem 1.14]{OR_rev} yields the uniqueness of the Gibbs measure:
\begin{definition}[Infinite-Volume Gibbs measure]
  Let $\mu$ be a probability measure on the state space $\mathds{R}^{\mathds{Z}}$ equipped with the standart product Borel sigma-algebra. For any finite subset $\Lambda \subset \mathds{Z}$ we decompose the measure $\mu$ into the conditional measure $\mu(dx^\Lambda| x^{\mathds{Z} \backslash \Lambda})$ and the marginal $\bar \mu (d x^{\mathds{Z} \backslash \Lambda})$. This means that for any test function $f$ it holds
\begin{equation*}
  \int f(x) \mu (dx) = \int \int f(x) \mu(dx^\Lambda| x^{\mathds{Z} \backslash \Lambda}) \bar \mu (d x^{\mathds{Z} \backslash \Lambda}).
\end{equation*}
We say that the measure $\mu$ is the infinite-volume Gibbs measure associated to the Hamiltonian $H$, if the conditional measures $\mu(dx^\Lambda| x^{\mathds{Z} \backslash \Lambda})$ are given by the finite-volume Gibbs measures $\mu_{\Lambda}(dx^\Lambda)$ defined by~\eqref{e_d_Gibbs_measure} i.e.
\begin{equation*}
  \mu(dx^\Lambda| x^{\mathds{Z} \backslash \Lambda}) = \mu_\Lambda (dx^\Lambda).
\end{equation*} 
The equations of the last identity are also called Dobrushin-Lanford-Ruelle (DLR) equations. 
\end{definition}
\begin{theorem}[Uniqueness of the infinite-volume Gibbs measure]\label{p_unique_Gibbs}
 We assume that the formal Hamiltonian $H:\mathds{R}^{\mathds{Z}} \to \mathds{R} $ given by~\eqref{e_d_Hamiltonian} satisfies the Assumptions~\eqref{e_cond_psi}~-~\eqref{e_decay_ising}.\newline
Then there is at most one unique Gibbs measure $\mu$ associated to the Hamiltonian $H$ satisfying the uniform bound
\begin{equation}~\label{e_sup_moment}
  \sup_{i \in \mathds{Z}} \var_{\mu}(x_i) < \infty.
\end{equation}
\end{theorem}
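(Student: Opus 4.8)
The plan is to derive the statement from \cite[Theorem 1.14]{OR_rev}: that result deduces uniqueness of the infinite-volume Gibbs measure within the class of measures satisfying~\eqref{e_sup_moment} from exactly two analytic inputs --- a logarithmic Sobolev inequality for the finite-volume Gibbs measures $\mu_\Lambda$ with constant uniform in $\Lambda$ and in the boundary data $x^{\mathds{Z}\backslash\Lambda}$, and an algebraic decay of the finite-volume two-point functions uniform in $\Lambda$ and in $x^{\mathds{Z}\backslash\Lambda}$. Both are now available: the first is Theorem~\ref{p_mr_LSI}, the second is Theorem~\ref{p_mr_decay}. Together with the structural assumptions~\eqref{e_cond_psi}--\eqref{e_decay_ising} these are precisely the hypotheses of \cite[Theorem 1.14]{OR_rev}, so the statement follows. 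For orientation I sketch the mechanism that makes such an implication work.

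I would first reduce to local observables: two probability measures on $\mathds{R}^{\mathds{Z}}$ with the product Borel $\sigma$-algebra agree as soon as they assign the same expectation to every bounded smooth function of finitely many coordinates, since such functions generate the $\sigma$-algebra. So let $\mu,\nu$ be infinite-volume Gibbs measures with $\sup_i\var_\mu(x_i)<\infty$ and $\sup_i\var_\nu(x_i)<\infty$, fix a finite $S\subset\mathds{Z}$ and a bounded smooth $f=f(x^S)$, and let $\Lambda_n\uparrow\mathds{Z}$ be finite sets with $S\subset\Lambda_n$. The variance bound together with $\sum_j|M_{ij}|\lesssim1$ (from~\eqref{e_decay_ising}) ensures that $\mu$- and $\nu$-almost every configuration is tempered and that $\E_\mu|x_j|$ and $\E_\nu|x_j|$ are bounded uniformly in $j$. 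By the DLR equations,
\[
  \E_\mu[f]-\E_\nu[f]=\int\!\!\int\Bigl(\E_{\mu_{\Lambda_n}(\cdot| y)}[f]-\E_{\mu_{\Lambda_n}(\cdot| z)}[f]\Bigr)\,\bar\mu(dy^{\mathds{Z}\backslash\Lambda_n})\,\bar\nu(dz^{\mathds{Z}\backslash\Lambda_n}),
\]
so it suffices to show that $\E_{\mu_{\Lambda_n}(\cdot| y)}[f]$ becomes asymptotically independent of the boundary condition as $n\to\infty$.

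To see this I would interpolate linearly between the boundary conditions $y$ and $z$ and differentiate: using $\partial_{y_j}\E_{\mu_\Lambda}[f]=-\sum_{i\in\Lambda}M_{ij}\cov_{\mu_\Lambda}(f,x_i)$, the integrand above becomes $-\int_0^1\sum_{j\notin\Lambda_n}(y_j-z_j)\sum_{i\in\Lambda_n}M_{ij}\,\cov(f,x_i)\,dt$, the covariance being taken with respect to $\mu_{\Lambda_n}$ with boundary condition $ty+(1-t)z$. The key point is that $|\cov_{\mu_{\Lambda_n}}(f,x_i)|$ decays in $\dist(i,S)$: by the chain rule it is a weighted average of covariances of the type $\cov_{\mu_{\Lambda_n}}(g,x_i)$ with $g$ Lipschitz and supported on $S$, which the uniform LSI of Theorem~\ref{p_mr_LSI} bounds uniformly, while Theorem~\ref{p_mr_decay} (whose statement is the case $g=x_k$, the general case following by the same argument) supplies the decay $\lesssim(|k-i|^{2+\tilde\alpha}+1)^{-1}$. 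Inserting the uniform first-moment bound on $y_j,z_j$ and summing via $\sum_{j:\,|i-j|\ge d}|M_{ij}|\lesssim d^{-1-\alpha}$ leads to
\[
  |\E_\mu[f]-\E_\nu[f]|\ \lesssim\ \sum_{i\in\Lambda_n}\frac{1}{\dist(i,\mathds{Z}\backslash\Lambda_n)^{1+\alpha}}\cdot\frac{1}{\dist(i,S)^{2+\tilde\alpha}+1},
\]
and in dimension one this sum tends to $0$ as $n\to\infty$ for any $\alpha,\tilde\alpha>0$, as one checks by splitting the summation according to whether $i$ is close to $S$ or close to $\mathds{Z}\backslash\Lambda_n$. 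Hence $\E_\mu[f]=\E_\nu[f]$ for all such $f$, i.e.\ $\mu=\nu$.

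The main obstacle is the step where the pair-correlation decay of Theorem~\ref{p_mr_decay} is upgraded to a bound on $\cov_{\mu_{\Lambda_n}}(f,x_i)$ for general local $f$ that remains uniform in $\Lambda_n$ and in the boundary condition; this is exactly where the uniform LSI of Theorem~\ref{p_mr_LSI} is needed, and it is the part absorbed into the machinery of \cite[Theorem 1.14]{OR_rev}. A secondary subtlety is the quantitative matching of the interaction exponent $2+\alpha$ against the correlation exponent $2+\tilde\alpha$ and the $|\Lambda_n|$ terms implicit in the double sum, which the displayed estimate resolves. Since both required inputs have been established above, Theorem~\ref{p_unique_Gibbs} is a direct consequence of \cite[Theorem 1.14]{OR_rev}, and we omit the remaining details.
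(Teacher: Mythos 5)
Your proposal takes exactly the paper's route: the uniqueness is obtained as a direct application of \cite[Theorem~1.14]{OR_rev}, whose two hypotheses are supplied by the uniform LSI of Theorem~\ref{p_mr_LSI} and the correlation decay of Theorem~\ref{p_mr_decay}. The additional interpolation-in-the-boundary-condition sketch you give (which the paper omits entirely) is a reasonable account of what that cited theorem does under the hood and does not change the fact that the core argument is the same citation.
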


The variance condition~\ref{e_sup_moment} is kind of standart. For example it follows from a widely used moment condition
\begin{equation*}
  \sup_{i \in \mathds{Z}} \int (x_i)^2 \mu (dx) < \infty,
\end{equation*}
which is used in the study of infinite-volume Gibbs measures (see for example~~\cite{BHK82} and~\cite[Chapter~4]{Roy07}). It is relatively easy to show that the condition~\eqref{e_sup_moment} is invariant under adding a bounded random field to the Hamiltonian~$H$ (cf.~Remark~\ref{r_linear_term}).

 
\begin{remark}
  In this article, we do not show the existence of an infinite-volume Gibbs measure. However, the authors of this article believe that under the assumption~\eqref{e_sup_moment} the existence should follow by an compactness argument similarly to the one used in~\cite{BHK82}.   
\end{remark}

\begin{remark}
Usually one considers finite-volume Gibbs measure with some inverse temperature $\beta >0$ i.e.
  \begin{equation*}
    	\mu_\Lambda (dx^\Lambda ) = \frac{1}{Z_\mu} \e^{- \beta H(x^\Lambda, x^{\mathds{Z} \backslash \Lambda})} \dx x \qquad \mbox{for } x^\Lambda \in \mathds{R}^{\Lambda}.
  \end{equation*}
This case is also contained in the main result of this article, because the Hamiltonian $\beta H$ still satisfies the structural Assumptions~\eqref{e_cond_psi}~-~\eqref{e_decay_ising}. Of course, the constants in the decay of correlations result of Theorem~\ref{p_mr_decay} will depend on the inverse temperature~$\beta$. 
\end{remark}

\begin{remark}
  Because we assume that the matrix $M= (M_{ij})$ is strictly diagonal dominant (cf.~\eqref{e_strictly_diag_dominant}), the full single-site potential 
  \begin{equation*}
    \psi_i(x_i) + m_{ii} x_i^2 = m_{ii} x_i^2 + \psi_i^c(x_i) + \psi_i^b (x_i)
  \end{equation*}
is perturbed strictly-convex. This is the same structural assumption as used in the article~\cite{MO}, which seems to be vey natural for unbounded spin systems.
\end{remark}

In order to avoid confusion, let us make the notation $a \lesssim b$ more precise. 
\begin{definition}\label{def:dep}
	We will use the notation $a \lesssim b$ for quantities $a$ and $b$
	to indicate that there is a constant $C \ge 0$
	which depends only on the lower bound~$\delta$ and upper bounds for $|\psi_i^b|$, $|(\psi_i^b)'|$, and $\sum_{i,j \in \mathds{Z}} |M_{ij}|$ such that $a \le C b$. In the same manner, if we assert the existence of certain constants, they may freely depend on the above mentioned quantities, whereas all other dependencies will be pointed out.
\end{definition}

We close the introduction by giving an outline over the article.\smallskip
\begin{itemize}
\item  In Section~\ref{s_covariance_estimate}, we outline the strategy of the proof of Theorem~\ref{p_mr_decay}. 
\item In Section~\ref{s_griffiths_estimates}, we carry out the first step of the proof of Theorem~\ref{p_mr_decay}. We show that it suffices to estimate the covariances with respect to a ferromagnetic finite-volume Gibbs measure with nice symmetries.
\item In Section~\ref{s_zegar_perturbation_argument}, we carry out the second step of the proof of Theorem~\ref{p_mr_decay} deducing a preliminary decay of correlations by perturbing Zegarlinski's argument.
\item In Section~\ref{s_cov_est_postprocess}, we carry out the last step of the proof of Theorem~\ref{p_mr_decay} applying an iterative scheme based on Lebowitz inequalities to improve the preliminary decay of correlations to the correct order.
\end{itemize}

\section{Proof of Theorem~\ref{p_mr_decay}: Outline of the argument}\label{s_covariance_estimate}

Before we turn to the covariance estimate, we apply a transformation that allows us to analyze covariances of an associated measure $\mu_{\Lambda,q}$, which has better properties than the original measure $\mu_{\Lambda}$. This standart procedure is, for example, also applied in~\cite{Zeg96}. \medskip

We represent the covariance $\cov_{\mu_{\Lambda}}(x_i, x_j)$ in the following way. From the definition of the covariance it follows that
\begin{equation*}
  \cov_{\mu_{\Lambda}}(x_i, x_j) = \frac{1}{2} \int \int (x_i -y_i) (x_j -y_j) \mu_{\Lambda}(dx) \mu_{\Lambda} (dy).
\end{equation*}
By the change of coordinates $x_k= q_k + p_k$ and $y_k= q_k -p_k$ for all $k \in \Lambda$, the last identity yields by using the definition~\eqref{e_d_Gibbs_measure} of the finite-volume Gibbs measure $\mu_{\Lambda}$ that
\begin{align*}
  \cov_{\mu_{\Lambda}}(x_i, x_j) &= 2 \int \int p_i  p_j \  \underbrace{\frac{e^{-H(q^{\Lambda}+p^{\Lambda}, x^{\mathds{Z}\backslash \Lambda}) - H(q^{\Lambda}-p^{\Lambda}, x^{\mathds{Z}\backslash \Lambda})}}{\int e^{-H(q^{\Lambda}+p^{\Lambda}, x^{\mathds{Z}\backslash \Lambda})  -H(q^{\Lambda}-p^{\Lambda}, x^{\mathds{Z}\backslash \Lambda})}  \dx p^{\Lambda} \dx q^{\Lambda} }   \dx p^{\Lambda} \dx q^{\Lambda} }_{=: d \tilde \mu_{\Lambda} (q^{\Lambda},p^{\Lambda})} . 
\end{align*}
By conditioning on the values $q^{\Lambda}$ it follows from the definition~\eqref{e_d_Hamiltonian} of~$H$ that
\begin{equation} \label{e:repre_covariance}
  \cov_{\mu_{\Lambda}}(x_i, x_j) = 2 \mathds{E}_{\tilde \mu_{\Lambda}} \left[ \mathds{E}_{\mu_{\Lambda,q}} \left[ p_i  p_j \right] \right].
\end{equation}
Here, the conditional measure $\mu_{\Lambda,q}$ is given by the density
\begin{equation}
  \label{eq:def_mu_q}
	\mu_{\Lambda,q}(dp^\Lambda) \coloneqq \frac{1}{Z_{\mu_{\Lambda,q}}} \e^{-\sum_{k \in \Lambda} \psi_{k,q}(p_k) - \sum_{k,l \in \Lambda} M_{kl} p_k p_l} \dx p^\Lambda  
\end{equation}
with single-site potentials $\psi_{k,q} \coloneqq \psi_{k,q}^c + \psi_{k,q}^b$ defined by 
\begin{align*}
   \psi_{k,q}^c(p_k) & \coloneqq \psi_k^c(q_k + p_k) + \psi_k^c(q_k - p_k) \qquad \mbox{and} \\
  \psi_{k,q}^b(p_k) & \coloneqq \psi_k^b(q_k + p_k) + \psi_k^b(q_k - p_k).  
\end{align*}
The conditional measure $\mu_{\Lambda,q}$ has three properties: the single-potentials $\psi_{k,q}$ are symmetric i.e.~$\psi_{k,q}(p_k)=\psi_{k,q}(-p_k)$, therefore the variables $p^{\Lambda}$ are centered, and $\mu_{\Lambda,q}$  is independent of the fixed spin values $x^{\mathds{Z} \backslash \Lambda}$. Therefore, it holds 
\begin{equation}\label{e_repr_covariance_moment}
  \int p_i \dx \mu_{\Lambda, q} (p^{\Lambda})=0 \qquad \mbox{and thus} \qquad \mathds{E}_{\mu_{\Lambda,q}}  \left[ p_i  p_j \right] = \cov_{\mu_{\Lambda, q}} (p_i, p_j).
\end{equation}
In view of~\eqref{e:repre_covariance}, it therefore suffices to show that the covariances of the measure $\mu_{\Lambda,q}$ decay sufficiently fast. \medskip

In order to simplify further the structure of the associated Gibbs measure~$\mu_{\Lambda,q}$, the next statement shows that we may assume w.l.o.g.~that the interactions in the Hamiltonian are attractive i.e.

\begin{lemma}\label{p:attractive_interact_dominates}
  Assume that the finite-volume Gibbs measure~$\mu_{\Lambda,q}$ is given by (\ref{eq:def_mu_q}). Additionally, consider the corresponding finite-volume Gibbs measure~~$\mu_{\Lambda,q,|M|}$ with attractive interaction i.e. 
\begin{equation}
  \label{eq:def_mu_q_|M|}
	\mu_{\Lambda,q,|M|}(dp^\Lambda) \coloneqq \frac{1}{Z_{\mu_{\Lambda,q,|M|}}} \e^{-\sum_{k \in \Lambda} \psi_{k,q}(p_k) - \sum_{k,l \in \Lambda} |M_{kl}| p_k p_l} \dx p^\Lambda  
\end{equation}
Then it holds that for any $i,j \in \Lambda$
\begin{equation}
  \label{eq:covariance_domination}
  | \cov_{\mu_{\Lambda, q}} (p_i,p_j)  | \leq \cov_{\mu_{\Lambda, q, |M|}} (p_i,p_j) .
\end{equation}
\end{lemma}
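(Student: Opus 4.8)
The plan is to derive the covariance domination from Griffiths-type correlation inequalities. The key structural fact is that both measures $\mu_{\Lambda,q}$ and $\mu_{\Lambda,q,|M|}$ have single-site potentials $\psi_{k,q}$ that are even functions of $p_k$; only the interaction matrices differ, namely $(M_{kl})$ versus $(|M_{kl}|)$. Since the measure $\mu_{\Lambda,q,|M|}$ has purely attractive (ferromagnetic) interactions and even single-site weights, one can expand both partition functions and correlation functions in the off-diagonal interaction coefficients. First I would write, for each pair $k\neq l$, $\e^{-M_{kl}p_kp_l}$ as a power series (or use a cluster/high-temperature expansion in these couplings), keeping the diagonal quadratic term $M_{kk}p_k^2$ together with $\psi^c_{k,q}$ and $\psi^b_{k,q}$ inside a fixed even reference single-site measure $\nu_k(dp_k)$. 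Because $\nu_k$ is even, all odd moments $\int p_k^{2m+1}\,\nu_k(dp_k)$ vanish, so in the expansion of $\E_{\mu_{\Lambda,q}}[p_ip_j]$ only terms survive in which every site appears to an even total power; each such surviving term carries a monomial $\prod_{\{k,l\}} (-M_{kl})^{n_{kl}}$ times a manifestly nonnegative product of even moments of the $\nu_k$. This is the standard Griffiths/GKS mechanism adapted to continuous even spins.

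Concretely, I would establish the two inequalities
\begin{equation*}
  \cov_{\mu_{\Lambda,q,|M|}}(p_i,p_j) \ge 0
  \qquad\text{and}\qquad
  \bigl|\cov_{\mu_{\Lambda,q}}(p_i,p_j)\bigr| \le \cov_{\mu_{\Lambda,q,|M|}}(p_i,p_j),
\end{equation*}
the second by comparing the expansions term by term: replacing each factor $(-M_{kl})^{n_{kl}}$ by $|M_{kl}|^{n_{kl}} = |{-M_{kl}}|^{n_{kl}}$ can only increase a sum of terms each of which is a nonnegative coefficient times such a monomial, and $|\sum \text{(signed terms)}| \le \sum \text{(absolute terms)}$. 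A clean way to make the expansion rigorous despite the unbounded spins is to first note that $\mu_{\Lambda,q}$ and $\mu_{\Lambda,q,|M|}$ are well-defined probability measures with all moments finite (the diagonal quadratic term $M_{kk}p_k^2$ dominates, by strict diagonal dominance~\eqref{e_strictly_diag_dominant}, and $\psi^c_{k,q}\ge 0$ while $\psi^b_{k,q}$ is bounded), so the Gaussian-type tail guarantees absolute convergence of the series for $\E[p_ip_j]$ and for the normalization. Alternatively, one can approximate $\nu_k$ by compactly supported even measures and invoke the classical Griffiths inequalities for even single-site distributions (as in Ellis' monograph or Simon's book, cf.~\cite{Simon}) and then pass to the limit.

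The main obstacle I anticipate is handling the unboundedness of the spins in the correlation-inequality step: the classical GKS/Griffiths II inequality for continuous spins is usually stated for single-site measures with finite exponential moments or compact support, whereas here $\nu_k$ only has Gaussian-type decay. So the crux is either (i) to verify directly that the term-by-term expansion converges absolutely and can be rearranged — which needs a bound on the even moments $\int p_k^{2m}\,\nu_k(dp_k)$ growing no faster than $(Cm)^m$, again a consequence of the dominant diagonal term — or (ii) to set up a truncation argument and control the limit of covariances, using that the truncated measures converge weakly and that $p_ip_j$ is uniformly integrable under them. Once the expansion/approximation is justified, the sign-comparison itself is elementary. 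I would also remark that, since $q$ enters only through the even potentials $\psi_{k,q}$, the inequality holds for every fixed tempered $q^\Lambda$, which is exactly what is needed to feed back into the representation~\eqref{e:repre_covariance}.
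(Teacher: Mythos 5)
The approach you propose does not actually establish the inequality, because the term-by-term expansion controls the \emph{unnormalized} integrals but not the ratio. Write $N_M = \int p_ip_j\,\e^{-H_M}\,\dx p^\Lambda$ and $Z_M$ for the partition function of $\mu_{\Lambda,q}$, and similarly $N_{|M|}$, $Z_{|M|}$ for $\mu_{\Lambda,q,|M|}$. Your expansion in the off-diagonal couplings, together with the evenness of the single-site reference measures, does show that every surviving term carries a nonnegative coefficient, hence $|N_M|\le N_{|M|}$ and $0<Z_M\le Z_{|M|}$. But the inequality you need is $|N_M|\,Z_{|M|}\le N_{|M|}\,Z_M$, and the two bounds you have point in opposite directions: $|N_M|\le N_{|M|}$ helps, but $Z_M\le Z_{|M|}$ hurts. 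So the naive comparison stops short of~\eqref{eq:covariance_domination}; it only yields the first Griffiths inequality (nonnegativity of the ferromagnetic correlation $N_{|M|}/Z_{|M|}\ge 0$), not the domination of covariances. Controlling the cross terms between numerator and denominator is exactly what the second GKS inequality (equivalently, Ginibre's duplication of variables) is designed for.

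The paper's proof handles this by introducing a ghost Ising spin $\sigma\in\{-1,1\}$ coupled to the frustrated bonds, so that $\sigma=-1$ recovers $\mu_{\Lambda,q}$ and $\sigma=+1$ recovers $\mu_{\Lambda,q,|M|}$, and the combined $(p^\Lambda,\sigma)$-system is ferromagnetic; the desired inequality is then precisely $\cov_{\tilde\mu}(p_ip_j,\sigma)\ge 0$, i.e.\ a GKS-II statement for the augmented system, proved via Sylvester's duplication-of-variables argument (Lemma~\ref{p_second_GKS}). The lower bound $-\cov_{\mu_{\Lambda,q}}\le\cov_{\mu_{\Lambda,q,|M|}}$ is obtained by a spin flip at site $i$ and the same ghost-spin trick. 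If you want to rescue an expansion-style argument, you would have to expand \emph{after} duplicating variables (as in the proof of Lemma~\ref{p_second_GKS}), which is essentially the same mechanism; a direct single-copy expansion of the ratio does not close. Your remarks on convergence and truncation for unbounded spins are pertinent but secondary — the structural gap is the missing duplication/GKS-II step.
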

The content of Lemma~\ref{p:attractive_interact_dominates} is also standart (see for example~\cite[Section~3]{Simon}). However, for the sake of completeness we state the proof of Lemma~\ref{p:attractive_interact_dominates} in Section~\ref{s_griffiths_estimates}. One can apply the same argument as used in~\cite{HorMor} for discrete spins, as soon as one shows that the second GKS inequality~\cite{KellySher} holds in our situation. \medskip

As announced in the introduction, we derive the following covariance estimate in the second step.
\begin{proposition}[Preliminary decay of spin-spin correlations] \label{p_mr_decay_reduced_measure}
Assume that the formal Hamiltonian $H:\mathds{R}^{\mathds{Z}} \to \mathds{R} $ given by~\eqref{e_d_Hamiltonian} satisfies the assumptions~\eqref{e_cond_psi}~-~\eqref{e_decay_ising}. We consider the conditional measure $\mu_{\Lambda,q,|M|}$ given by~(\ref{eq:def_mu_q_|M|}). 
Then there exist a constant $0 < \tilde \alpha < \alpha$ such that 
\begin{equation}\label{e_algeb_decay_sp_sp_corr}
  \cov_{\mu_\Lambda,q,|M| } (x_i,x_j) \lesssim \ \frac{1}{|i-j|^{\tilde \alpha}}
\end{equation}
uniformly in $\Lambda \subset \mathds{Z}$, $i,j \in \Lambda$, and $p^\Lambda $.
\end{proposition}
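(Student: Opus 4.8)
The plan is to adapt Zegarlinski's finite-range martingale argument (see \cite[Lemma~4.5]{Zeg96}) to the infinite-range setting by a perturbation argument, exploiting the fact that the associated measure $\mu_{\Lambda,q,|M|}$ has symmetric single-site potentials and attractive interaction. First I would fix $i,j \in \Lambda$ and consider the sequence of blocks interpolating between $\{i\}$ and $\Lambda$; following Zegarlinski, I would decompose the covariance $\cov_{\mu_{\Lambda,q,|M|}}(p_i,p_j)$ along a martingale associated to the successive conditional expectations, so that the covariance is controlled by the product over scales $\ell$ of quantities measuring how strongly a block of size $\sim 2^\ell$ is coupled to its complement. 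In the finite-range case each such coupling factor can be made $<1$ uniformly because only a bounded number of bonds cross a cut; here, by \eqref{e_decay_ising}, the total strength of bonds of length $\geq r$ is $\lesssim r^{-(1+\alpha)}$, so a cut at scale $\ell$ contributes a factor $1 + C 2^{-\ell(1+\alpha)}$ rather than a factor bounded away from $1$, and the product of these correction factors converges. The core analytic input is a uniform logarithmic Sobolev inequality (or at least a uniform spectral gap / Poincar\'e inequality) for the single-site conditional measures, which holds because the full single-site potential $m_{kk}p_k^2 + \psi_{k,q}(p_k)$ is perturbed strictly convex uniformly in $k$ and $q$ (the Bakry-\'Emery criterion applies to $\psi^c_{k,q}$, and the bounded perturbation $\psi^b_{k,q}$ is handled by the Holley-Stroock perturbation lemma); this is where I would also need the improved moment estimates on $\E_{\mu_{\Lambda,q,|M|}}[|p_k|^m]$ that the introduction flags as crucial, since the long-range bonds contribute terms like $\sum_j |M_{kj}|\, \E[|p_j|]$ that must be absorbed.

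Concretely, I would proceed in the following steps. Step 1: establish uniform (in $\Lambda$, $q$, and the finite-volume approximations) moment bounds $\E_{\mu_{\Lambda,q,|M|}}[p_k^2] \lesssim 1$ and higher-moment analogues, using strict diagonal dominance \eqref{e_strictly_diag_dominant} to dominate the interaction by the single-site curvature and a Gaussian-type comparison. Step 2: for a cut separating a finite block $B$ from its complement, derive a one-step covariance contraction estimate showing that conditioning on the spins outside $B$ reduces the covariance by a factor $(1 + C\sum_{k\in B, l\notin B}|M_{kl}|)^{-1}$ or similar, via a Gaussian-domination / Brascamp-Lieb-type bound combined with the uniform Poincar\'e inequality from the perturbed-convex single-site potentials. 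Step 3: iterate over a dyadic sequence of nested blocks between $i$ and $j$; the number of scales is $\sim \log|i-j|$, and multiplying the per-scale contraction factors $e^{-c}$ at the scales where the block boundary is ``short'' against the correction factors $(1 + C 2^{-\ell(1+\alpha)})$ at coarser scales yields a bound of the form $|i-j|^{-c}$ for some $c>0$; choosing $\tilde\alpha = \min(c,\alpha)$, or any positive number below both, gives \eqref{e_algeb_decay_sp_sp_corr}. Step 4: check that all constants depend only on the quantities listed in Definition~\ref{def:dep}, in particular that they are uniform in the boundary data $q^\Lambda$ because $\mu_{\Lambda,q,|M|}$ is independent of $x^{\mathds{Z}\setminus\Lambda}$ and the symmetrized potentials $\psi_{k,q}$ inherit uniform perturbed-convexity bounds from $\psi_k$.

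The main obstacle I anticipate is Step 2–3, specifically making the long-range bonds compatible with the telescoping structure: in Zegarlinski's finite-range scheme each block interacts only with an $O(R)$-neighborhood, so the recursion is genuinely finite and the contraction at each step is ``clean,'' whereas with algebraically decaying interactions every block interacts with \emph{all} other blocks, and one must bookkeep the tail contributions $\sum_{l \notin B}|M_{kl}|\,|p_l|$ carefully so that they are summable over scales and do not destroy the product estimate. This is presumably why the author says one must ``improve some moment estimates of \cite{Zeg96}'': the naive moment bounds are too weak to control these tails uniformly, and a sharper bound (perhaps controlling $\E[|p_k|]$ with a constant that does not grow with the number of interacting sites) is needed to close the perturbation argument. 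A secondary technical point is ensuring the uniform LSI/Poincar\'e constant survives the symmetrization $p \mapsto \psi^c_k(q+p)+\psi^c_k(q-p)$, which preserves convexity but could a priori change the curvature lower bound; this should follow because $(\psi^c_{k,q})'' (p) = (\psi^c_k)''(q+p) + (\psi^c_k)''(q-p) \geq 0$ and the diagonal term $m_{kk}$ supplies the strict convexity with a constant uniform in $q$.
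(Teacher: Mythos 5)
Your proposal takes a genuinely different route from the paper, and in its present form the central mechanism does not close. The paper does \emph{not} prove Proposition~\ref{p_mr_decay_reduced_measure} via a martingale decomposition together with a logarithmic Sobolev or Poincar\'e inequality and a one-step covariance contraction. Instead, it proceeds in two steps: (i) a truncation argument (Lemma~\ref{lem:cut}), where all interactions of range at least $R=|i-j|^{1-\varepsilon}$ between the sites $i$ and $j$ are dropped and the resulting change in $\cov(p_i,p_j)$ is bounded by $|i-j|^{-\delta}$ using the exponential moment estimates of Lemma~\ref{lem:moments}; and (ii) Zegarlinski's reflection trick (Lemma~\ref{lem:zeg}) applied to the truncated measure, in which the symmetry $\psi_{k,q}(p)=\psi_{k,q}(-p)$ yields the exact identity
\begin{equation*}
  \int p_i p_j \, \dx\mu_{\Lambda,q,I}
  = \int p_i p_j \prod_{k=1}^{m}\tanh\Bigl(-2\sum_{k'\in B_{k-1}}\sum_{l\in B_k} M_{k'l}\, p_{k'} p_l\Bigr)\,\dx\mu_{\Lambda,q,I},
\end{equation*}
which, split along a high-probability event where many $\tanh$ factors are small, produces decay superpolynomial in the number of blocks $L\sim|i-j|^{\varepsilon}$. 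Balancing $R$ against $L$ then gives the stated algebraic decay. The r\^ole of the improved moment estimates is precisely to make the truncation error and the bad-event probability explicit in $R$ and $L$; your intuition that the moment estimates are the crucial input is correct, but they are used differently.

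The genuine gap in your approach is Step~2: the claim that conditioning on the spins outside a block $B$ reduces the covariance by a factor $\bigl(1+C\sum_{k\in B,\,l\notin B}|M_{kl}|\bigr)^{-1}$ is not a consequence of Brascamp--Lieb or of a uniform Poincar\'e inequality. Brascamp--Lieb requires log-concavity, which the perturbed-convex single-site potentials do not provide, and a uniform spectral gap bounds variances but does not by itself give a multiplicative contraction of covariances under conditioning. Similarly, in Step~3 the precise combination of the per-scale ``contraction'' and ``correction'' factors is not given, and the number of scales at which one actually gets a factor $\e^{-c}$ is left unjustified, so the claimed $|i-j|^{-c}$ bound is not established. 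Finally, your treatment of the symmetry of $\psi_{k,q}$ is auxiliary, whereas in the paper's argument this symmetry is the engine of the proof: the $\tanh$-product representation is obtained entirely by reflections $p_k\mapsto-p_k$ across block boundaries, which is why the reduction to $\mu_{\Lambda,q,|M|}$ matters. If you want to pursue a Poincar\'e/LSI-based scheme you would essentially be proving a different (Dobrushin--Shlosman type) criterion, and would need to formulate and prove the contraction lemma from scratch; as written, the proposal replaces the key mechanism of the paper with one that has not been supplied.
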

The fact that Proposition~\ref{p_mr_decay_reduced_measure} does not show the right order of decay is not surprising: The reason is that Proposition~\ref{p_mr_decay_reduced_measure} is derived by a combination of the approach of Zegarlinski~\cite[Theorem~4.1.]{Zeg96} for finite-range interaction and a perturbation argument. The proof of Proposition~\ref{p_mr_decay_reduced_measure} is stated in Subsection~\ref{s_zegar_perturbation_argument}. In the perturbation argument, we pass from the original Hamiltonian $H$ to a Hamiltonian with suitably truncated interactions. We also have to quantize the estimates more precisely than it was needed in Zegarlinski's application. This quantification is achieved by using slightly improved moment estimates compared to~\cite{Zeg96} (cf.~notes before Lemma~\ref{lem:zeg} from below). At this point one fundamentally uses the simplified structure of the measure~$\mu_{\Lambda,q,|M|}$, namely that the convex part of the single-site potentials $\psi_{k,q}^c$ has a global minimum in $0$.  \medskip

After deducing Proposition~\ref{p_mr_decay_reduced_measure}, we improve the suboptimal covariance estimate~\eqref{e_algeb_decay_sp_sp_corr} by postprocessing the estimate recursively, which was already a successful strategy in the case of discrete spins (cf.~\cite{FroeSpenc}). In order to get a recursive relation for the covariance, we apply that the measure $\mu_{\Lambda,q,|M|}$ obeys the Lebowitz inequality~\cite[Section~3]{Simon}. We carry out the details of this argument in Subsection~\ref{s_cov_est_postprocess}, where the proof of Theorem~\ref{p_mr_decay} is also stated.   \medskip

\section{Domination of covariances by ferromagnetic systems: proof of Lemma~\ref{p:attractive_interact_dominates}}\label{s_griffiths_estimates}

In the argument for Lemma~\ref{p:attractive_interact_dominates}, we follow the lines of the proof of~\cite[Theorem~1]{HorMor}.
\begin{proof}[Proof of Lemma~\ref{p:attractive_interact_dominates}]
  Firstly, observe that for any $i \in \Lambda$
  \begin{equation*}
    \int p_i \mu_{\Lambda, q} = \int p_i \mu_{\Lambda, q, |M|}= 0   
  \end{equation*}
   by symmetry of the Hamiltonian w.r.t.~the measures~$\mu_{\Lambda, q}$ and~$\mu_{\Lambda, q, |M|}$. We start with showing
  \begin{equation} \label{e:ferromangnetic_dominates_upper_bound}
     \cov_{\mu_{\Lambda, q}} (p_i,p_j) =  \int p_i p_j \dx \mu_{\Lambda, q}  \leq \cov_{\mu_{\Lambda, q, |M|}} (p_i,p_j)= \int p_i p_j \dx \mu_{\Lambda, q, |M|}.
  \end{equation}
  For this purpose, we introduce the auxiliary Hamiltonian $\tilde H (x^\Lambda, \sigma)$ by
  \begin{equation}\label{e:aux_Hamil_ferro_domination}
    \tilde H(p^\Lambda, \sigma) = \sum_{k \in \Lambda} \psi_{k,q} (p_k) + \sum_{k,l \notin \Omega} M_{kl} p_k p_l  - \sum_{k,l \in \Omega} M_{kl} p_k p_l \sigma,
  \end{equation}
  where the set $\Omega$ is defined by
  \begin{equation*}
    \Omega \coloneqq \left\{ \left\{k,l \right\} \subset \Lambda \ : \ M_{kl} > 0 \right\}
  \end{equation*}
  and $\sigma \in  \left\{ -1, 1 \right\}$ is a ghost Ising spin on a ghost site. We introduce the finite-volume Gibbs measure associated to the Hamiltonian $\tilde H$ by
  \begin{equation}
    \label{eq:def_aux_meas_ferro_domination}
    \tilde \mu (dp^\Lambda,d \sigma)= \frac{1}{Z_{\tilde \mu}} e^{-\tilde H(p^\Lambda, \sigma)} \dx p^\Lambda \  \dx \gamma(\sigma), 
  \end{equation}
where $\gamma$ denotes the measure 
$\gamma = \frac{1}{2} (\delta_{-1} + \delta_{1})$ corresponding to the ghost Ising spin $\sigma$. Because the Hamiltonian $\tilde H(p^\Lambda, \sigma)$ only has ferromagnetic interaction the following special case of the second GKS inequality holds (for a proof see Lemma~\ref{p_second_GKS} from below)
\begin{equation}\label{eq:ferro_domination_second_GKS}
  \int p_i p_j \sigma \dx \tilde \mu - \int p_i p_j \dx \tilde \mu \int \sigma \dx \tilde \mu  \geq 0.
\end{equation}
Note that $\tilde H(p^\Lambda, -1)$ coincides with the Hamiltonian of the measure $\mu_{\Lambda, q}$, whereas $\tilde H(p^\Lambda, 1)$ coincides with the Hamiltonian of the measure~$\mu_{\Lambda, q, |M|}$. Therefore we get the identity 
\begin{equation*}
  \int \sigma \dx \tilde \mu  = \frac{1}{Z_{\tilde \mu}} \left( Z_{\mu_{\Lambda, q, |M|}} - Z_{\mu_{\Lambda, q}}  \right) \qquad \mbox{and} \qquad Z_{\tilde \mu} = Z_{\mu_{\Lambda, q, |M|}} + Z_{\mu_{\Lambda, q}}.   
\end{equation*}
Hence, multiplying the estimate~\eqref{eq:ferro_domination_second_GKS} with $Z_{\tilde \mu}^2$ yields
\begin{align*}
 & \left(  \int p_i p_j e^{\tilde H(p^\Lambda, 1)} \dx p^\Lambda - \int p_i p_j e^{\tilde H(p^\Lambda, -1)} \dx p^\Lambda  \right)   \left(  Z_{\mu_{\Lambda, q, |M|}} + Z_{\mu_{\Lambda, q}} \right) \\
& \qquad \geq  \left(  \int p_i p_j e^{\tilde H(p^\Lambda, 1)} \dx p^\Lambda + \int p_i p_j e^{\tilde H(p^\Lambda, -1)} \dx p^\Lambda  \right) \left(   Z_{\mu_{\Lambda, q, |M|}} -Z_{\mu_{\Lambda, q}} \right),
\end{align*}
from which the desired estimate~\eqref{e:ferromangnetic_dominates_upper_bound} follows. \newline
It is left to show that
\begin{equation}\label{e:ferromangnetic_dominates_lower_bound}
  - \int p_i p_j \dx \mu_{\Lambda, q}  \leq  \int p_i p_j \dx \mu_{\Lambda, q, |M|}.
\end{equation}
For this purpose we define the auxiliary Hamiltonian $H_i$ by
\begin{equation*}
  \tilde H_i(p^\Lambda, \sigma) = \sum_{k \in \Lambda} \psi_{k,q} (p_k) + \sum_{\substack{k,l \in \Lambda\\ k \neq i, l \neq i } } M_{kl} p_k p_l  - \sum_{\substack{k,l \in \Lambda\\ k =i \vee l=i} } M_{kl} p_k p_l \sigma.
\end{equation*}
Note that this Hamiltonian flipped the interaction w.r.t.~the $i$-th site. Considering the finite-volume Gibbs measure $\mu_i (dp^\Lambda)$ that is associated to the Hamiltonian $H_i$ we get by substitution
\begin{equation*}
  - \int p_ip_j \dx \mu_{\Lambda, q} = \int p_i p_j \dx \mu_i.
\end{equation*}
Hence, the desired estimate~\eqref{e:ferromangnetic_dominates_lower_bound} follows from the estimate 
\begin{equation*}
  \int p_i p_j \dx \mu_i \leq \int p_i p_j \dx \mu_{\Lambda, q, |M|},
\end{equation*}
which can be shown in the same way as~\eqref{e:ferromangnetic_dominates_upper_bound}.
\end{proof}
In the proof of Lemma~\ref{p:attractive_interact_dominates} we used the following fact.
\begin{lemma}\label{p_second_GKS}
  The measure $\tilde \mu (dp^\Lambda,d \sigma)$ given by~\eqref{eq:def_aux_meas_ferro_domination} satisfies for any $i,j \in \Lambda$ the estimate 
  \begin{equation}\label{eq:ferro_domination_second_GKS_lemm}
      \cov_{\tilde \mu} (p_ip_j, \sigma) = \int p_i p_j \sigma \dx \tilde \mu - \int p_i p_j \dx \tilde \mu \int \sigma \dx \tilde \mu  \geq 0.
  \end{equation}
\end{lemma}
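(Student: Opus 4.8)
The statement is precisely the second Griffiths--Kelly--Sherman (GKS) inequality applied to the ferromagnetic measure $\tilde\mu$ with the pair-function $f=p_ip_j$ and the single-spin function $g=\sigma$. The plan is to reduce the inequality to the classical GKS framework by recognizing that $\tilde H$ is a ferromagnetic Hamiltonian on the extended configuration space $\mathbb{R}^\Lambda\times\{-1,1\}$. The key structural observations are: (i) the single-site potentials $\psi_{k,q}$ are even, hence the one-body factors $e^{-\psi_{k,q}(p_k)}$ are symmetric under $p_k\mapsto -p_k$; (ii) every remaining term of $\tilde H$ is of the form $-J\,p_kp_l$ or $-J\,p_kp_l\sigma$ with $J>0$, i.e.\ a product of an even number of ``spins'' (including $\sigma$) with a positive coupling; and (iii) $\sigma$ itself enters only through such ferromagnetic products. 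Under these hypotheses the Ginibre/GKS argument applies verbatim.

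Concretely, I would proceed by the standard duplication trick. Introduce an independent copy $(p'^\Lambda,\sigma')$ of $(p^\Lambda,\sigma)$ under $\tilde\mu\otimes\tilde\mu$, and pass to the ``sum'' and ``difference'' variables $t_k=\tfrac12(p_k+p'_k)$, $u_k=\tfrac12(p_k-p'_k)$, together with $\tau=\sigma\sigma'\in\{-1,1\}$ (or, equivalently, treat $\sigma,\sigma'$ directly). One then writes
\begin{equation*}
\cov_{\tilde\mu}(p_ip_j,\sigma)=\tfrac12\int\!\!\int\bigl(p_ip_j\sigma+p'_ip'_j\sigma'-p_ip_j\sigma'-p'_ip'_j\sigma\bigr)\,d\tilde\mu\,d\tilde\mu,
\end{equation*}
and expresses the integrand in the new variables; the crucial point is that $p_ip_j\sigma+p'_ip'_j\sigma'-p_ip_j\sigma'-p'_ip'_j\sigma$ factors into a nonnegative combination of monomials in $t,u$ (and the sign variables). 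Simultaneously the exponential weight $e^{-\tilde H(p^\Lambda,\sigma)-\tilde H(p'^\Lambda,\sigma')}$, after expanding each $e^{J p_kp_l\sigma}=\sum_{n\ge0}\frac{(Jp_kp_l\sigma)^n}{n!}$ into its (nonnegative-coefficient) power series and re-summing, becomes a series with nonnegative coefficients in the monomials of $t,u$ and the sign variables. Because each $u_k$ and each $t_k$ appears to even total degree in every surviving term (this is where evenness of $\psi_{k,q}$ enters, guaranteeing $\int t^a u^b\,e^{-\psi_{k,q}(t+u)-\psi_{k,q}(t-u)}\,dt\,du\ge0$ whenever $a,b$ are both even, and $=0$ otherwise), every term integrates to a nonnegative number. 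Summing yields $\cov_{\tilde\mu}(p_ip_j,\sigma)\ge0$.

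The main obstacle is purely bookkeeping rather than conceptual: one must justify the interchange of the (infinite) power-series expansion of the exponential with the Gaussian-type integration in the unbounded variables $p_k$, i.e.\ an absolute-convergence/Fubini argument. This is where the convexity of $\psi_{k,q}^c$ and the boundedness of $\psi_{k,q}^b$ from \eqref{e_cond_psi}, together with strict diagonal dominance \eqref{e_strictly_diag_dominant} (which makes the quadratic form $\sum M_{kl}p_kp_l$ subdominant to the single-site confinement), ensure that all moments are finite and the rearranged series converges. Once this is in place, the sign analysis above goes through as in the discrete case treated in \cite{KellySher} and \cite{HorMor}; alternatively, one may obtain the continuous-spin inequality by approximating each single-site measure $e^{-\psi_{k,q}(p_k)}\,dp_k$ weakly by convex combinations of symmetric two-point measures $\tfrac12(\delta_{-a}+\delta_a)$ (which is possible precisely because $\psi_{k,q}$ is even and the measure has all moments), applying the discrete second GKS inequality of \cite{KellySher} to each approximant, and passing to the limit. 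I would present whichever of these two routes is shortest; the approximation route has the advantage of quoting the discrete GKS inequality as a black box and reducing the new content to a standard weak-convergence lemma for even measures with finite moments.
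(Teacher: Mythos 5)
Your main route---Ginibre/Sylvester duplication, passing to sum-and-difference variables (the paper's $\hat p_k,\hat q_k,\alpha,\beta$), expanding the ferromagnetic interaction into a power series with nonnegative coefficients, and using the evenness of $\psi_{k,q}$ to conclude that only even-degree monomials survive and each contributes a nonnegative amount---is precisely the argument the paper gives, which it explicitly attributes to Sylvester. Your suggested alternative via approximation by symmetric two-point measures is a different, also standard route that the paper does not take, and your remark that the interchange of the infinite power series with the integration over the unbounded spins needs a Fubini-type justification is a legitimate point that the paper leaves implicit.
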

The estimate~\eqref{eq:ferro_domination_second_GKS_lemm} is just a special case of the second GKS inequality~\cite{KellySher}. To be self contained, we state a proof of Lemma~\ref{p_second_GKS} which uses the idea of Sylvester of expanding the exponential function~\cite{Sylvester}
.
\begin{proof}[Proof of Lemma~\ref{p_second_GKS}]
  By doubling the variables, we represent the covariance in the following way (cf.~begin of Section~\ref{s_covariance_estimate})
  \begin{equation*}
    \cov_{\tilde \mu} (p_ip_j, \sigma) = \frac{1}{2}\int \int (p_ip_j - \tilde p_i \tilde p_j)  (\sigma  - \tilde \sigma) \tilde \mu (d p^{\Lambda} , d \sigma) \tilde \mu (d \tilde p^{\Lambda}, d \tilde \sigma).
  \end{equation*}
  Applying the the transformation $p_k= \hat p_k + \hat q_k$ and $\tilde p_k= -\hat p_k + \hat q_k$ for all $k \in \Lambda$ and the transformation $\sigma = \alpha + \beta$ and $\tilde \sigma = -\alpha + \beta$  yields the identity
  \begin{align}
  \notag & \cov_{\tilde \mu} (p_ip_j, \sigma) = \\
   &\frac{1}{Z} \underbrace{\int \int ( \hat p_i \hat q_j + \hat p_j \hat q_i ) \alpha e^{-\tilde H (\hat p^\Lambda + \hat q^{\Lambda}, \alpha + \beta) -\tilde H (-\hat p^\Lambda + \hat q^{\Lambda}, -\alpha + \beta)} \dx \hat p^\Lambda \dx \hat q^\Lambda  \ \nu (d\alpha) \ \nu (d \beta)}_{=:T}, \label{rep_cov_second_GKS}
  \end{align}
where $Z>0$ is a unspecified normalization constant and the measure $\nu$ on the space $\left\{ -1,0,1 \right\}$ is given by
\begin{align*}
  \nu(-1) = \nu(1) = \frac{1}{4} \qquad \mbox{and} \qquad \nu(0) =\frac{1}{2}.
\end{align*}
For the sign of the covariance $\cov_{\tilde \mu} (p_ip_j, \sigma)$ only the integral term on the right hand side of~\eqref{rep_cov_second_GKS} is important. \newline
Hence, let us have a closer look at the integral term. Note that the Hamiltonian $\tilde H$ can be written as 
\begin{align*}
   \tilde H (p,q,\sigma)  = \tilde I (p,q,\sigma)  + \tilde P (p,q,\sigma)  
\end{align*}
where $\tilde I$ denotes the ferromagnetic interaction 
\begin{align*}
  \tilde I (p,q,\sigma) =  \sum_{\substack{k,l \in \Lambda\\ k \neq i, l \neq i } } M_{kl} p_k p_l  - \sum_{\substack{k,l \in \Lambda\\ k =i \vee l=i} } M_{kl} p_k p_l \sigma
\end{align*}
and $\tilde P$ denotes the single-site potentials
\begin{align*}
   \tilde P (p,q,\sigma) =  \sum_{k \in \Lambda} \psi_{k,q} (p_k).
\end{align*}
  Recall that the single-site potential $\psi_{k,q} : \mathbb{R} \to \mathbb{R}$ is an even function, therefore the function $\tilde P (p,q,\sigma)$ is also even in any coordinate.\newline
 Using this decomposition of $\tilde H$ we can write the integral term of~\eqref{rep_cov_second_GKS} as
  \begin{align*}
&    T = \int \int ( \hat p_i \hat q_j + \hat p_j \hat q_i ) \alpha e^{-\tilde I (\hat p^\Lambda + \hat q^{\Lambda}, \alpha + \beta) -\tilde I (-\hat p^\Lambda + \hat q^{\Lambda}, -\alpha + \beta)} \\
& \qquad \qquad \times e^{-\tilde P (\hat p^\Lambda + \hat q^{\Lambda}, \alpha + \beta) -\tilde P (-\hat p^\Lambda + \hat q^{\Lambda}, -\alpha + \beta)} \dx \hat p^\Lambda \dx \hat q^\Lambda  \ \nu (d\alpha) \ \nu (d \beta).
  \end{align*}
Expanding the interaction part into a Taylor series yields that
  \begin{align}
&    T = \int \int p(\hat p, \hat q, \alpha, \beta)  e^{-\tilde P (\hat p^\Lambda + \hat q^{\Lambda}, \alpha + \beta) -\tilde P (-\hat p^\Lambda + \hat q^{\Lambda}, -\alpha + \beta)} \dx \hat p^\Lambda \dx \hat q^\Lambda  \ \nu (d\alpha)  \nu (d \beta), \label{e_rep_cov_poly}
  \end{align}
where $p(\hat p, \hat q, \alpha, \beta)$ is a polynomial (formally of infinite degree). The coefficients of the polynomial $p$ are nonnegative due to the fact that the interaction term $\tilde I$ of the Hamiltonian $\tilde H$ is ferromagnetic. \newline
Because the function 
\begin{align*}
  \tilde P (\hat p^\Lambda + \hat q^{\Lambda}, \alpha + \beta) +\tilde P (-\hat p^\Lambda + \hat q^{\Lambda}, -\alpha + \beta)
\end{align*}
is even in any coordinate of $\hat p^\Lambda,\hat q^{\Lambda}, \alpha,$ and $\beta$, it follows that
\begin{align*}
 & \int \int \hat p_i^{n_i} \hat q_j^{n_j} \alpha^{n_\alpha} \beta^{n_\beta}  e^{-\tilde P (\hat p^\Lambda + \hat q^{\Lambda}, \alpha + \beta) -\tilde P (-\hat p^\Lambda + \hat q^{\Lambda}, -\alpha + \beta)} \dx \hat p^\Lambda \dx \hat q^\Lambda  \ \nu (d\alpha)  \nu (d \beta)  \\ 
& \qquad=0,
\end{align*}
if any number $n_i, n_j, {n_\alpha}$, and $n_\beta$ is odd. Therefore, only terms of even order remain in the representation~\eqref{e_rep_cov_poly} from above, which yields the desired estimate 
\begin{align*}
  \cov_{\tilde \mu} (p_ip_j, \sigma) = \frac{1}{Z} T \geq 0.
\end{align*}
\end{proof}

\section{A perturbation of Zegarlinski's approach: proof of Proposition~\ref{p_mr_decay_reduced_measure}}\label{s_zegar_perturbation_argument}

For the proof of Proposition~\ref{p_mr_decay_reduced_measure} we follow the approach of Zegarlinski~\cite{Zeg96}. In the first step, we provide some auxiliary moment estimates. These estimates represent the fundament of the proof of Proposition~\ref{p_mr_decay_reduced_measure}. We start with the following standart tool on stochastic domination of one-dimensional measures with respect to monotone perturbations.

\begin{lemma}\label{lem:FKG}
	Let $\nu$ be a probability measure on $\mathds{R}$ and let $f$ and $\psi$ be monotone functions
	on the support of $\nu$ with the same direction of monotonicity. Then
	\[
		\frac{\int f \e^{-\psi} \dx\nu}{\int \e^{-\psi} \dx\nu} \le \int f \, \dx\nu,
	\]
	provided these integrals exist.
\end{lemma}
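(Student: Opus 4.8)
The plan is to recognize this as the one-dimensional Harris/FKG inequality (equivalently, Chebyshev's sum inequality in integral form): two monotone functions of the \emph{same} variable whose directions of monotonicity are opposite are negatively correlated under any probability measure. Applying this to $f$ and $\e^{-\psi}$ and then dividing by the strictly positive normalization $\int \e^{-\psi}\,\dx\nu$ will give the claim.

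First I would set $g \coloneqq f$ and $h \coloneqq \e^{-\psi}$. Since $\psi$ is monotone on $\operatorname{supp}\nu$, the function $h=\e^{-\psi}$ is monotone there in the \emph{opposite} direction to $\psi$, hence in the opposite direction to $f$. Therefore, for every pair $(x,y)$ of points in $\operatorname{supp}\nu$ the product $(g(x)-g(y))(h(x)-h(y))$ is $\le 0$: if $x\le y$ then $g(x)-g(y)$ and $h(x)-h(y)$ have opposite signs (or one of them vanishes), and the case $x\ge y$ is symmetric. Integrating this pointwise inequality over $\nu\otimes\nu$ and expanding by Fubini gives
\[
0 \ \ge\ \int\!\!\int \bigl(g(x)-g(y)\bigr)\bigl(h(x)-h(y)\bigr)\,\nu(\dx x)\,\nu(\dx y) \ =\ 2\int gh\,\dx\nu \ -\ 2\int g\,\dx\nu \,\int h\,\dx\nu,
\]
so that $\int f\,\e^{-\psi}\,\dx\nu \le \int f\,\dx\nu\,\int \e^{-\psi}\,\dx\nu$. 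Since $\e^{-\psi}>0$ everywhere, $\int \e^{-\psi}\,\dx\nu>0$, and dividing by this quantity yields the asserted bound.

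The only point requiring a little care is the integrability bookkeeping needed to justify the expansion of the double integral — one must know that each of the four terms $\int gh$, $\int g\int h$, etc., is finite, which follows from the standing hypothesis that the relevant integrals exist together with Tonelli applied to absolute values. If one prefers to bypass this entirely, one can instead prove the inequality first for $f$ replaced by its truncation $f_n\coloneqq (f\wedge n)\vee(-n)$, which is bounded and has the same monotonicity, and then let $n\to\infty$ using dominated convergence (the dominating functions being $|f|$ and $|f|\,\e^{-\psi}$, both $\nu$-integrable by assumption). I do not expect any genuine obstacle here; the content of the lemma is the elementary sign observation on $(g(x)-g(y))(h(x)-h(y))$.
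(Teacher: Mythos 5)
Your proof is correct, and it takes a slightly different (and arguably more elementary) route than the paper's. You apply the Chebyshev/FKG sign observation $(g(x)-g(y))(h(x)-h(y)) \le 0$ directly to the pair $g=f$, $h=\e^{-\psi}$, which are oppositely monotone, obtaining in a single step the product inequality $\int f\,\e^{-\psi}\,\dx\nu \le \int f\,\dx\nu \cdot \int \e^{-\psi}\,\dx\nu$ and then dividing. The paper instead interpolates: it introduces the tilted family $\nu_\lambda(\dx x) \propto \e^{-\lambda\psi(x)}\,\nu(\dx x)$, differentiates to get $\frac{\dx}{\dx\lambda}\int f\,\dx\nu_\lambda = -\cov_{\nu_\lambda}(f,\psi)$, and applies the very same doubling identity — but to $f$ and $\psi$ themselves, which are co-monotone so that the covariance is nonnegative — finishing by the fundamental theorem of calculus on $[0,1]$. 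The core combinatorial observation is identical; the difference is whether one applies it once to $(f,\e^{-\psi})$ or pointwise in $\lambda$ to $(f,\psi)$. Your route avoids differentiation under the integral sign and the attendant regularity bookkeeping, and your truncation argument to justify expanding the double integral is a clean way to handle integrability. The paper's path construction buys nothing extra for this particular lemma, though the interpolation template is a convenient one to have on hand when one wants to compare expectations under intermediate tilted measures as well.
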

\begin{proof}[Proof of Lemma~\ref{lem:FKG}]
	Consider the probability measures $\nu_\lambda$ given by 
        \begin{equation*}
          \nu_\lambda(dx) \coloneqq \frac{\e^{-\lambda \psi(x)} \dx\nu(x)}{\int \e^{-\lambda \psi} \dx\nu}.
        \end{equation*}
	Then direct calculation yields the estimate
        \begin{align*}
        	\frac{\dx}{\dx \lambda} \int f \, \dx\nu_\lambda
			& = -\cov_{\nu_\lambda}(f,\psi) \\
			& = -\int \int (f(x)-f(y)) (\psi(x)-\psi(y)) \dx\nu_\lambda(x) \dx\nu_\lambda(y) \le 0  
        \end{align*}
	by monotonicity. Hence, the desired estimate follows from an application of the fundamental theorem of calculs i.e.
	\[
		\frac{\int f \e^{-\psi} \dx\nu}{\int \e^{-\psi} \dx\nu} - \int f \, \dx\nu
			= \int f \, \dx\nu_1 - \int f \, \dx\nu_0
			= \int_0^1 \frac{\dx}{\dx \lambda} \int f \, \dx\nu_\lambda \; \dx\lambda \le 0.
	\qedhere
	\]
\end{proof}

The Lemma~\ref{lem:FKG} is used to deduce the following control of the exponential moment of the finite-volume Gibbs measure $\mu_{\Lambda}$. 

\begin{lemma}\label{lem:moments}
  We assume that the formal Hamiltonian $H:\mathds{R}^{\mathds{Z}} \to \mathds{R} $ given by
   \begin{equation*}
	H(x) = \sum_{i \in \mathds{Z} } \psi_i(x_i) +  \sum_{i,j \in \mathds{Z}} M_{ij} x_i x_j.  
\end{equation*}
satisfies the Assumptions~\eqref{e_cond_psi}~-~\eqref{e_decay_ising}.\newline
Additionally, we assume that for all $ i \in \mathds{Z}$ the convex part $\psi_i^c$ of the single-site potentials $\psi_i$ has a global minimum in $x_i=0$. \newline
Let $\delta >0$ be given by~\eqref{e_strictly_diag_dominant}. Then for every $0 \le a \le \frac{\delta}{2}$ and any subset $\Lambda \subset \mathds{Z}$ it holds
        \begin{equation} \label{e:exponential_moment}
          \E_{\mu_{\Lambda}} \bigl[\e^{a p_i^2}\bigr] \lesssim 1.
        \end{equation}
In particular, for any $k \in \mathds{N}_0$ this yields 
\begin{equation} \label{e:arbitrary_moment}
  \E_{\mu_{\Lambda}}[p_i^{2k}] \lesssim k!.
\end{equation}
\end{lemma}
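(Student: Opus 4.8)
The plan is to condition on all spins but one, to distill a self‑improving inequality for the resulting one‑site conditional law, and to close it by interpolation. Throughout, fix a finite $\Lambda$ and abbreviate $G_\Lambda(b):=\sup_{l\in\Lambda}\E_{\mu_\Lambda}[\e^{b p_l^2}]$. Since the quadratic part of $H$ dominates $\delta\sum_l p_l^2$ by~\eqref{e_strictly_diag_dominant}, while $\psi_l^c$ is bounded below and $|\psi_l^b|\lesssim 1$, the measure $\mu_\Lambda$ has Gaussian tails, so $G_\Lambda(b)<\infty$ for every $b<\delta$, and in particular $G_\Lambda(\tfrac\delta2)<\infty$. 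It therefore suffices to bound $G_\Lambda(\tfrac\delta2)$ by a constant independent of $\Lambda$ (and, by monotonicity in $b$, this also covers all $0\le a\le\tfrac\delta2$).

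First I would establish the one‑site estimate. Conditioned on $p^{\Lambda\setminus\{i\}}$, the law of $p_i$ has Lebesgue density proportional to $\exp(-\psi_i(p_i)-M_{ii}p_i^2-t_ip_i)$ with $t_i=2\sum_{l\ne i}M_{il}p_l$; after completing the square this is a bounded perturbation (by $\psi_i^b$, costing a factor $\e^{O(1)}$ only, since $|\psi_i^b|\lesssim 1$) of the measure proportional to $\exp(-\psi_i^c(p_i)-M_{ii}(p_i-m_i)^2)$, $m_i:=-t_i/(2M_{ii})$, which is log‑concave of curvature at least $2M_{ii}\ge 2\delta$. Using that $\psi_i^c$ attains its minimum at $0$ — hence is monotone on each of $[0,\infty)$ and $(-\infty,0]$, which is exactly what makes Lemma~\ref{lem:FKG} applicable — together with a Gaussian comparison for the log‑concave factor, one should obtain, for $0\le a\le\tfrac\delta2$,
\[
  \E_{\mu_\Lambda}\bigl[\e^{a p_i^2}\bigm| p^{\Lambda\setminus\{i\}}\bigr]\;\le\; C\,\exp\Bigl(\frac{a}{M_{ii}}\sum_{l\ne i}|M_{il}|\,p_l^2\Bigr),
\]
with $C$ depending only on $\delta$ and on the bounds for $\psi^b$. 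The decisive point is that the coefficients in the exponent sum to $\frac{a}{M_{ii}}\sum_{l\ne i}|M_{il}|\le a\bigl(1-\tfrac{\delta}{M_{ii}}\bigr)\le a\theta^*$, where $\theta^*:=1-\delta/\sup_i M_{ii}<1$ is uniform because $\sup_i M_{ii}\le\sup_i\sum_j|M_{ij}|<\infty$ by~\eqref{e_decay_ising}.

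Next I would take $\E_{\mu_\Lambda}$ of both sides and apply Hölder's inequality with weights $|M_{il}|/\sum_{l'\ne i}|M_{il'}|$, writing $\exp(\sum_{l\ne i}c_lp_l^2)=\prod_{l\ne i}\bigl(\exp((\sum_{l'}c_{l'})p_l^2)\bigr)^{w_l}$; this gives $\E_{\mu_\Lambda}[\e^{a p_i^2}]\le C\,G_\Lambda(a\theta^*)$, hence $G_\Lambda(a)\le C\,G_\Lambda(a\theta^*)$ for all $a\in[0,\tfrac\delta2]$. To close the loop, note that $\e^{b p^2}=(\e^{(\delta/2)p^2})^{2b/\delta}$ for $0\le b\le\tfrac\delta2$, so Jensen's inequality yields $G_\Lambda(b)\le G_\Lambda(\tfrac\delta2)^{2b/\delta}$; combining this (with $b=\tfrac\delta2\theta^*$) with the case $a=\tfrac\delta2$ of the previous inequality gives $G_\Lambda(\tfrac\delta2)\le C\,G_\Lambda(\tfrac\delta2)^{\theta^*}$, whence $G_\Lambda(\tfrac\delta2)\le C^{1/(1-\theta^*)}$ uniformly in $\Lambda$. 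This proves~\eqref{e:exponential_moment}, and then $\e^{a p_i^2}\ge(a p_i^2)^k/k!$ with $a=\tfrac\delta2$ turns it into $\E_{\mu_\Lambda}[p_i^{2k}]\le k!\,a^{-k}\,\E_{\mu_\Lambda}[\e^{a p_i^2}]\lesssim k!$, which is~\eqref{e:arbitrary_moment}.

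I expect the main obstacle to be the one‑site estimate of the second paragraph. A crude argument only bounds the conditional expectation by $\lesssim\e^{a m_i^2}$, and since $m_i^2$ can be comparable to $\tfrac{1}{M_{ii}}\sum_{l\ne i}|M_{il}|p_l^2$ with $\sum_{l\ne i}|M_{il}|$ as large as $M_{ii}-\delta$, the resulting exponent could again be of order $a$, so the self‑map $G_\Lambda(a)\le C\,G_\Lambda(a\theta^*)$ would not be a contraction. One has to extract the full gain $\delta/M_{ii}$ — keeping the coefficient of each $|M_{il}|p_l^2$ no larger than $a/M_{ii}$ — and this is precisely where the moment estimates of Zegarlinski must be sharpened and where the hypothesis that $\psi_i^c$ has its minimum at $0$ is used.
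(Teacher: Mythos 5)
Your proposal is correct, and its first half (the conditional one-site estimate) is essentially the argument of the paper: both proofs condition on $p^{\Lambda\setminus\{i\}}$, remove the bounded perturbation $\psi_i^b$ at the cost of a factor $\e^{O(1)}$, split the resulting integral at the minimum of $\psi_i^c$, apply Lemma~\ref{lem:FKG} on each half using the monotonicity of $\psi_i^c$, and finish with an explicit Gaussian computation. (The paper keeps $\psi_i^b$ in the exponent and absorbs $(\psi_i^b)'$ into a small quadratic $\tfrac{\delta}{4}p_i^2$, replacing $M_{ii}$ by $m_i=M_{ii}-\delta/4$; your variant of removing $\psi_i^b$ up front and keeping $M_{ii}$ is marginally sharper but equivalent in substance, since both yield a contraction coefficient strictly below $1$ thanks to the margin $\delta$ in~\eqref{e_strictly_diag_dominant}.)

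Where you genuinely depart from the paper is in closing the loop. The paper sets $v_i:=\log\E_{\mu_\Lambda}[\e^{a p_i^2}]$, derives from H\"older the componentwise vector inequality $Av\le\hat c\,\setone$ with $A_{ii}=m_i-a$, $A_{ij}=-|M_{ij}|$, and then invokes a separate matrix lemma (Lemma~\ref{p:positiv_entries_of_inverses}) asserting that a strictly diagonally dominant matrix with nonpositive off-diagonal entries has a nonnegative inverse with row sums at most $1/\delta$; applying $A^{-1}$ yields the uniform bound on $v$. You instead collapse the estimate to the single scalar $G_\Lambda(b)=\sup_{l}\E_{\mu_\Lambda}[\e^{bp_l^2}]$, obtain the self-map $G_\Lambda(a)\le C\,G_\Lambda(a\theta^*)$ with a uniform $\theta^*<1$, and close it by the interpolation $G_\Lambda(b)\le G_\Lambda(\tfrac{\delta}{2})^{2b/\delta}$ coming from Jensen applied to the concave power $x\mapsto x^{2b/\delta}$. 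Together with the a priori finiteness of $G_\Lambda(\tfrac{\delta}{2})$ for fixed finite $\Lambda$ (which follows since $(|M_{kl}|)_{k,l\in\Lambda}$ is strictly diagonally dominant and hence positive definite with smallest eigenvalue at least $\delta$), this gives $G_\Lambda(\tfrac{\delta}{2})\le C^{1/(1-\theta^*)}$. Your route is more elementary in that it bypasses the matrix-inverse lemma entirely, trading it for a scalar fixed-point argument; the paper's matrix approach is a touch more explicit in how the bound propagates between sites, but for the purpose of this lemma the two are equally effective and depend only on the allowed quantities from Definition~\ref{def:dep}.
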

The statement of Lemma~\ref{lem:moments} is a slight improvement of~\cite[Section~3]{BHK82}, because our assumptions are slightly weaker compared to~\cite {BHK82}. More precisely, $\psi_i''$ may change sign outside every compact set and there is no condition on the signs of the interaction. Moreover, our moment estimate is stronger, which is essential for our application. Still, the proof remains essentially the same.
\begin{proof}[Proof of Lemma~\ref{lem:moments}]
Let us introduce the auxiliary variables 
$$s_i \coloneqq - \frac{1}{2} \sum_{j \in \mathds{Z}, j \neq i} M_{ij} x_j \qquad \mbox{and} \qquad m_i:=M_{ii} - \frac{\delta}{4}.$$
In the first step of the proof, we will deduce the following moment estimate on the finite-volume Gibbs measure $\mu_{\left\{i \right\}}$ associated to a single-site $i \in \mathds{Z}$:
\begin{equation}
  \label{e_moment_estimate_single-site}
  \E_{\mu_{\left\{ i \right\}}} [e^{a x_i^2}] \lesssim \e^{\frac{as_i^2}{m_i(m_i-a)}} .
\end{equation}
Indeed, elementary estimates show that for every $K > 0$ it holds
	\begin{align*}
		\E_{\mu_{\left\{ i \right\} }} [\e^{a x_i^2}]
			\le \e^{aK^2} & + \frac{\int_{\lbrace x_i \le -K \rbrace} \e^{a x_i^2} \e^{-\psi_{i}(x_i) - M_{ii} x_i^2 + 2s_i x_i} \dx x_i}{\int_{\lbrace x_i \le -K \rbrace} \e^{-\psi_{i}(x_i) - M_{ii}x_i^2 + 2s_i x_i} \dx x_i} \\
				& + \frac{\int_{\lbrace x_i \ge K \rbrace} \e^{a x_i^2} \e^{-\psi_{i}(x_i) - M_{ii} x_i^2 + 2s_i x_i} \dx x_i}{\int_{\lbrace x_i \ge K \rbrace} \e^{-\psi_{i}(x_i) - M_{ii}x_i^2 + 2s_i x_i} \dx x_i}.
	\end{align*}
	We set $K \coloneqq \frac{4}{\delta} \sup_i \|\psi_i^b\|_{\mathrm{C}^1} \lesssim 1$. Then it follows by using the fact  that $\psi_{i}^c$ is convex with a global minimum at $x_i = 0$ that for $x_i \ge K$ 
	\[
		\frac{\dx}{\dx x_i} \bigl( \psi_{i}(x_i) + \frac{\delta}{4} x_i^2 \bigr) = (\psi_{i}^c)'(x_i) + (\psi_{i}^b)'(x_i) + \frac{\delta}{2} x_i \ge (\psi_{i}^b)'(x_i) + \frac{\delta}{2} x_i \ge 0
	\]
	and similarly for $x_i \le -K$
	\[
		\frac{\dx}{\dx x_i} \bigl( \psi_{i}(x_i) + \frac{\delta}{4} x_i^2 \bigr) = (\psi_{i}^c)'(x_i) + (\psi_{i}^b)'(x_i) + \frac{\delta}{2} x_i \le (\psi_{i}^b)'(x_i) + \frac{\delta}{2} x_i \le 0
	\]
	Hence, we conclude from Lemma~\ref{lem:FKG} that
	\begin{align}
		& \E_{\mu_{\left\{i \right\}} }[\e^{a x_i^2}] \notag \\
			& \le \e^{aK^2} + \frac{\int_{\lbrace x_i \le -K \rbrace} \e^{-(M_{ii}-\frac{\delta}{4}-a) x_i^2 + 2s_ix_i} \dx x_i}{\int_{\lbrace x_i \le -K \rbrace} \e^{-(M_{ii}-\frac{\delta}{4})x_i^2 + 2s_ix_i} \dx x_i}
				+ \frac{\int_{\lbrace x_i \ge K \rbrace} \e^{-(M_{ii}-\frac{\delta}{4}-a) x_i^2 + 2s_ix_i} \dx x_i}{\int_{\lbrace x_i \ge K \rbrace} \e^{-(M_{ii}-\frac{\delta}{4})x_i^2 + 2s_ix_i} \dx x_i} \notag \\
			& = \e^{aK^2} + \e^{\frac{as_i^2}{m_i(m_i-a)}} \sqrt{\frac{m_i}{m_i-a}} \biggl( \frac{\Phi\bigl(\frac{\sqrt{2}(s_i-K(m_i-a))}{\sqrt{m_i-a}}\bigr)}{\Phi\bigl(\frac{\sqrt{2}(s_i-Km_i)}{\sqrt{m_i}}\bigr)} + \frac{\Phi\bigl(\frac{-\sqrt{2}(s_i+K(m_i-a))}{\sqrt{m_i-a}}\bigr)}{\Phi\bigl(\frac{-\sqrt{2}(s_i+Km_i)}{\sqrt{m_i}}\bigr)} \biggr). \label{e:second_moment_single_site_core}
	\end{align}
	Here, $\Phi$ denotes the cumulative normal distribution function. If $s_i \le -K \sqrt{m_i}\sqrt{m_i-a}$, then
        \begin{align*}
          \frac{\sqrt{2}(s_i-K(m_i-a))}{\sqrt{m_i-a}} \leq  \frac{\sqrt{2}(s_i-Km_i)}{\sqrt{m_i}}
        \end{align*}
        by straightforward calculation. Therefore, we have
        \begin{equation}
          \label{e:second_moment_normal_distribution_first}
            		\Phi\Bigl(\frac{\sqrt{2}(s_i-K(m_i-a))}{\sqrt{m_i-a}}\Bigr) \le \Phi\Bigl(\frac{\sqrt{2}(s_i-Km_i)}{\sqrt{m_i}}\Bigr)
        \end{equation}
	by monotonicity. If $s_i \ge -K \sqrt{m_i} \sqrt{m_i-a}$, then
        \begin{align}
        	\Phi\Bigl(\frac{\sqrt{2}(s_i-Km_i)}{\sqrt{m_i}}\Bigr)
			& \ge \Phi\Bigl(-\sqrt{2}K(\sqrt{m_i-a} + \sqrt{m_i})\Bigr) \notag \\
			& \ge \Phi\Bigl(-2\sqrt{2}K  \sqrt{m_i} \Bigr). \label{e:second_moment_normal_distribution_aux}
        \end{align}
        We may assume w.l.o.g~that $M_{ii} \leq c$. Else one could just redefine for all $M_{ii}$ that satisfy the estimate 
        \begin{equation*}
          M_{ii} \geq \delta + \sum_{i,j \in \mathds{Z}, i \neq j}|M_{ij}| =: c
        \end{equation*}
        the convex part $\psi_i^c$ of the single site potential $\psi_i$ according to 
        \begin{equation*}
          \tilde \psi_i^c (x_i) = \psi_i^c(x_i) + (M_{ii} -c) x_{i}^2.
        \end{equation*}
        Then, the new $\tilde M_{ii}$ would become $\tilde M_{ii}=c$.
        Therefore we can continue the estimation of~\eqref{e:second_moment_normal_distribution_aux} as
        \begin{align}
          \Phi\Bigl(\frac{\sqrt{2}(s_i-Km_i)}{\sqrt{m_i}}\Bigr) \ge \Phi\Bigl(-2\sqrt{2}K  \sqrt{c} \Bigr) > 0. \label{e:second_moment_normal_distribution_second}
        \end{align}
	A combination of the estimate~\eqref{e:second_moment_normal_distribution_first} and~\eqref{e:second_moment_normal_distribution_second} yields
	\[
	  \frac{\Phi\bigl(\frac{\sqrt{2}(s_i-K(m_i-a))}{\sqrt{m_i-a}}\bigr)}{\Phi\bigl(\frac{\sqrt{2}(s_i-Km_i)}{\sqrt{m_i}}\bigr)} \lesssim 1.
	\]
	A similar calculation works for the other expression in~\eqref{e:second_moment_single_site_core} that involves the cumulative normal distribution function~$\Phi$.
	Hence, we have the shown the desired inequality~\eqref{e_moment_estimate_single-site}.\medskip

        \noindent Since $\sum_{j \in \mathds{Z}, j \neq i} |M_{ij}| \le m_i-a$ by Assumption~(\ref{e_strictly_diag_dominant}), convexity yields
        \begin{align*}
        	\frac{s_i^2}{m_i(m_i-a)} &
			\le \frac{\left( \sum_{\substack{j \in \mathds{Z} \\ j \neq i}} |M_{ij}| \, |x_j| \right)^2}{(m_i-a)^2} \\
			 & = \left(\sum_{j \neq i} \frac{|M_{ij}|}{m_i-a} |x_j|\right)^2
			\le \sum_{j \neq i} \frac{|M_{ij}|}{m_i-a} x_j^2.
        \end{align*}
	Combining the latter with~\eqref{e_moment_estimate_single-site} and using H\"older's inequality we obtain
	\begin{align*}
		\E_{\mu_{\Lambda}}[\e^{ax_i^2}] & \lesssim  \E_{\mu_{\Lambda}} [\e^{\frac{as_i^2}{m_i(m_i-a)}}]  \\
 			& \lesssim \E_{\mu_{\Lambda}}\Bigl[\prod_{j \neq i} \e^{\frac{|M_{ij}|}{m_i-a} ax_j^2}\Bigr]
			\le \prod_{j \neq i} \Bigl( \E_{\mu_{\Lambda}}[ \e^{ax_j^2} ] \Bigr)^{\frac{|M_{ij}|}{m_i-a}}.
	\end{align*}
	Define $v_i \coloneqq \log\E_{\mu_{\Lambda}}[\e^{a x_i^2}]$. Then the previous estimate gives
	\[
		(m_i-a) v_i \le (m_i-a) \log c + \sum_{j \neq i} |M_{ij}| v_j
	\]
	for a constant $c \ge 1$.
	By defining the matrix $A$ via 
        \begin{equation*}
        A_{ij} \coloneqq
        \begin{cases}
           m_i - a , & \mbox{for } i=j; \\
           -|M_{ij}|, &\mbox{for }	i \neq j.  
        \end{cases}
        \end{equation*}
         Then the previous estimate shows that for $v= (v_i)$ and $$\hat{c} \coloneqq \sup_i (m_i - a) \log(c) \lesssim 1$$
         the following estimate holds
	\begin{equation}\label{eq:matineq}
		Av \le \hat{c} \ \setone,
	\end{equation}
	where the last inequality has to be understood component wise. Note that matrix $A$ is strictly diagonal dominant, i.e.
        \begin{align*}
          A_{ii} - \sum_{\substack{j \in \mathds{Z} \\ j \neq i }} |A_{ij}|  \geq M_{ii} - \frac{3}{4} \delta - \sum_{\substack{j \in \mathds{Z} \\ j \neq i }} |M_{ij}|  \overset{~(\ref{e_strictly_diag_dominant})}{\geq} \frac{1}{4} \delta >0.
        \end{align*}
        Therefore it follows from a standart result (see Lemma~\ref{p:positiv_entries_of_inverses} below) that the entries $(A^{-1})_{ij}$ of the inverse $A^{-1}$ are nonnegative i.e. $(A^{-1})_{ij} \geq 0$. Hence, applying the inverse $A^{-1}$ on~\eqref{eq:matineq} yields the component wise inequality 
	\[
		v \le \hat{c} \; A^{-1} \; \setone,
	\]
        which yields the desired estimate~\eqref{e:exponential_moment} by an application of~\eqref{eq:bounded_ness_of_inverse_of_A} of Lemma~\ref{p:positiv_entries_of_inverses} from below.\medskip

	\noindent The remaining estimate~\eqref{e:arbitrary_moment} follows from the estimate~\eqref{e:exponential_moment} and the point-wise bound $$x_i^{2k} \le \frac{k!}{a^k} \e^{ax_i^2}.$$
\end{proof}

In the proof of Lemma~\ref{lem:moments} we used the following basic result on matrix theory.
\begin{lemma}\label{p:positiv_entries_of_inverses}
 Assume that the Matrix $A=(A_{ij})$ is strictly diagonally dominant in the sense of~(\ref{e_strictly_diag_dominant}) i.e. 
 \begin{align*}
   M_{ii} - \sum_{j \in \mathds{Z}, j \neq i} |M_{ij}| \geq \delta>0.
 \end{align*}
 Additionally, assume that the off-diagonal entries of $A$ are nonpositive i.e. $A_{ij} \leq 0$ for $i \neq j$.  \newline
Then the matrix $A$ is invertible, the entries of the matrix $A^{-1}$ are nonnegative i.e. $(A^{-1})_{ij} \geq 0$, and  
\begin{equation}
  \label{eq:bounded_ness_of_inverse_of_A}
    \sup_i \sum_{j=1} (A^{-1} )_{ij} \leq \frac{1}{\delta}.
\end{equation}
\end{lemma}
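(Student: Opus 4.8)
The plan is to reduce the three assertions to a single maximum-principle inequality and to prove everything first in finite dimensions; the general ($\mathds Z$-indexed) statement then follows by exhausting $\mathds Z$ with finite subsets. Observe that if $\Lambda \subset \mathds Z$ is finite, the principal submatrix $A^{(\Lambda)} \coloneqq (A_{ij})_{i,j \in \Lambda}$ still has nonpositive off-diagonal entries and is strictly diagonally dominant with the same constant, since $A_{ii} - \sum_{j \in \Lambda,\, j \neq i} |A_{ij}| \ge A_{ii} - \sum_{j \in \mathds Z,\, j \neq i} |A_{ij}| \ge \delta$; so it suffices to treat a finite index set. The key claim is: \emph{if $Ax = b$ with $b \ge 0$ componentwise, then $x \ge 0$ and $\|x\|_\infty \le \delta^{-1}\|b\|_\infty$.} Granting this, invertibility of $A$ follows by taking $b=0$, which forces $x=0$, so $\ker A = \{0\}$ and the square matrix $A$ is invertible; nonnegativity of $A^{-1}$ follows by applying the claim to $b = e_j$ for each standard basis vector, the solution being the $j$-th column of $A^{-1}$; and the estimate $\sup_i \sum_j (A^{-1})_{ij} \le \delta^{-1}$ follows by applying the claim to $b = \setone$, since the $i$-th component of $x = A^{-1}\setone$ equals $\sum_j (A^{-1})_{ij}$ and $\|x\|_\infty \le \delta^{-1}$.

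To prove the key claim, I would pick an index $i_0$ at which $x$ attains its minimum (possible since there are finitely many indices). Using $A_{i_0 j} \le 0$ and $x_j \ge x_{i_0}$ for $j \neq i_0$,
\[
  b_{i_0} = \sum_j A_{i_0 j} x_j = A_{i_0 i_0} x_{i_0} + \sum_{j \neq i_0} A_{i_0 j} x_j \le x_{i_0} \sum_j A_{i_0 j},
\]
and $\sum_j A_{i_0 j} = A_{i_0 i_0} - \sum_{j \neq i_0} |A_{i_0 j}| \ge \delta > 0$; since $b_{i_0} \ge 0$ this forces $x_{i_0} \ge 0$, hence $x \ge 0$. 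Now pick $i_1$ at which $x$ attains its maximum, so $x_{i_1} \ge 0$; the same computation with $x_j \le x_{i_1}$ for $j \neq i_1$ gives $b_{i_1} \ge x_{i_1} \sum_j A_{i_1 j} \ge \delta x_{i_1}$, whence $x_{i_1} \le \delta^{-1} b_{i_1} \le \delta^{-1}\|b\|_\infty$, i.e. $\|x\|_\infty \le \delta^{-1}\|b\|_\infty$. This proves the claim and hence the lemma in finite dimensions.

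For the infinite case I would exhaust $\mathds Z$ by finite sets $\Lambda_1 \subset \Lambda_2 \subset \cdots$ with $\bigcup_n \Lambda_n = \mathds Z$, and write $B^{(n)}$ for the $\mathds Z \times \mathds Z$ matrix obtained by padding $(A^{(\Lambda_n)})^{-1}$ with zeros. Embedding the $j$-th column of $(A^{(\Lambda_n)})^{-1}$ by zero into $\mathds R^{\Lambda_{n+1}}$ produces a vector $y$ with $A^{(\Lambda_{n+1})} y \le e_j$ (the rows indexed by $\Lambda_{n+1}\setminus\Lambda_n$ contribute only nonpositive terms), so applying the maximum principle to $A^{(\Lambda_{n+1})}(z-y) \ge 0$, with $z$ the $j$-th column of $(A^{(\Lambda_{n+1})})^{-1}$, gives $z \ge y$, i.e. the entries $B^{(n)}_{ij}$ increase in $n$. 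Hence the limits $(A^{-1})_{ij} \coloneqq \lim_n B^{(n)}_{ij}$ exist and are bounded by $\delta^{-1}$, the resulting matrix is nonnegative with row sums $\le \delta^{-1}$ by monotone convergence, and $A A^{-1} = A^{-1} A = I$ follows by passing to the limit in the finite-dimensional identities (using $\sum_j |A_{ij}| \le 2A_{ii} - \delta < \infty$ and the fixed sign of the off-diagonal contributions to justify the interchanges). I expect this last passage to the limit to be the only point requiring real care; the finite-dimensional statement — which is all that is used in the proof of Lemma~\ref{lem:moments} — is elementary, its core being the two-line maximum-principle computation above.
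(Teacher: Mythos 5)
Your proof is correct, but it takes a genuinely different route from the paper's. The paper proves the three assertions by three separate devices: invertibility from positive definiteness (itself a consequence of diagonal dominance), non-negativity of $A^{-1}$ from a semigroup argument (it represents $A^{-1}=\int_0^\infty e^{-tA}\,\dx t$ and argues that the flow of $\dot y=-Ay$ preserves the non-negative orthant because off-diagonal entries of $A$ are nonpositive), and the row-sum bound~\eqref{eq:bounded_ness_of_inverse_of_A} from the algebraic identity $1=\sum_i (A^{-1})_{ki}\sum_j A_{ij}\ge\delta\sum_i(A^{-1})_{ki}$, which uses the non-negativity already established. You instead deduce all three from a single discrete maximum principle: if $Ax\ge 0$ componentwise then $x\ge0$ and $\|x\|_\infty\le\delta^{-1}\|Ax\|_\infty$, proved by evaluating the row equation at a minimizer and a maximizer. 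This is arguably cleaner and more elementary (no ODEs, no exponentials), and it sidesteps a soft spot in the paper's argument: the paper's claim that the trajectory ``is rejected from the boundary'' once $\dot y_i(t^*)\ge 0$ is informal as stated, since $\dot y_i(t^*)$ may vanish; the usual rigorous fix (perturb $A$ by $\epsilon I$, or expand $e^{-tA}=e^{-t\operatorname{diag}(A)}\sum_k t^k(\operatorname{diag}(A)-A)^k/k!\ge 0$) is not spelled out. Your max-principle computation has no such issue. Your extra work in the last paragraph to handle an infinite index set is careful and correct, but not needed for the application: in Lemma~\ref{lem:moments} the matrix $A$ is indexed by the finite volume $\Lambda$ appearing in the finite-volume Gibbs measure, so the finite-dimensional statement, which is the crisp part of your argument, is all that is used.
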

The fact that the entries of $A^{-1}$ are nonnegative is for example also shown in~\cite[Lemma~5]{OR07} by induction. Our proof is a bit simpler and uses analysis and semigroups.  
\begin{proof}[Proof of Lemma~\ref{p:positiv_entries_of_inverses}]

	The existence of the inverse $A^{-1}$ follows directly from the fact that $A$ is positive definite, which follows directly from the fact that $A$ is strictly diagonal dominant. \smallskip

        \noindent Let us turn to the non-negativity of the entries of the inverse $A^{-1}$. We will show that the inverse $A^{-1}$ leaves set 
	\[
		S \coloneqq \{ y \in \mathds{R}^{\Lambda} : 0 \le y_i  \text{ for all } i \in \Lambda \}
	\]
invariant, which yields the non-negativity of the entries of the inverse $A^{-1}$ as a direct consequence. 
We need two observations. The first one is that the solution $e^{-tA}$ of $\dot{y}(t) = -Ay(t)$ remains in $S$, if $y(0) \in S$. Indeed, assume $y(t)$ does not stay in $S$. Then there is a time $t^*$ such that $y(t^*) \in \partial S \subset S$. Therefore, a component $y_i(t^*)$ must be $0$ i.e.~$y_i(t^*)=0$. One directly sees that $-Ay \cdot e_i \ge 0$ since $A_{ij} \le 0$ for $i \neq j$. Hence, $y_i(t^*)$ becomes positive again. Therefore $y(t)$ is rejected from the boundary of $S$ and therefore $y(t)$ remains in $S$ for all $t \ge 0$. \newline
\noindent The second observation is that the inverse $A^{-1}$ can be represented in the following way
\begin{equation*}
  A^{-1} = \int_0^{\infty} e^{-t A} \dx t.
\end{equation*}
Indeed, it holds that
\begin{equation*}
  A  \int_0^{\infty} e^{-t A} \dx t = - \int_0^{\infty} \frac{d}{dt} e^{-tA} dt = I.
\end{equation*}
The desired statement --$A^{-1}$ leaves set $S$ invariant-- now follows directly from a combination of the two observations.\smallskip

Let us verify the estimate~\eqref{eq:bounded_ness_of_inverse_of_A}. Because the matrix $A$ is strictly diagonally dominant and the entries $A_{ij}$, $i \neq j$, are nonpositive, it holds that for any $i$
\begin{align*}
  \sum_{j} A_{ij} \geq \delta.
\end{align*}
Because the entries of $A^{-1}$ are nonnegative it follows that for an arbitrary index $k$
\begin{align*}
   1 = \sum_{i} (A^{-1})_{ki}  \sum_{j} A_{ij} \geq \delta \sum_{i} (A^{-1})_{ki} ,
\end{align*}
which already yields the desired estimate~\eqref{eq:bounded_ness_of_inverse_of_A}.
\end{proof} \medskip

Now, we can turn to the proof of Proposition~\ref{p_mr_decay_reduced_measure}. As already noted before, we deduce Proposition~\ref{p_mr_decay_reduced_measure} by combining Zegarlinki's approach~\cite{Zeg96} with a perturbation argument. For this, we pass from the Hamiltonian $H$ to a Hamiltonian with suitably truncated interaction. The reason is that Zegarlinski's approach only works if the spins between $i$ and $j$ have finite-range interaction $R$. However if the range $R$ is fixed, Zegarlinski's method yields exponential decay of covariances, which is a lot more than we would like to proof. The main idea of the argument is to let $R$ grow w.r.t.~the distance $|i-j|$. This will have two consequences: 
\begin{itemize}
\item On the one hand, the covariance estimate obtained for the measure with truncated interaction does not decay exponentially anymore but still algebraically (cf.~Lemma~\ref{lem:zeg} below).
\item On the other hand, the measure with truncated interaction becomes close to the measure $\mu_{\Lambda,q}$ (cf.~Lemma~\ref{lem:cut} below).
\end{itemize}
So in the end, we can transfer the algebraic decay of covariances to the desired measure $\mu_{\Lambda,q}$ by a combination of both observations. So, Proposition~\ref{p_mr_decay_reduced_measure} is a direct consequence of a combination of Lemma~\ref{lem:cut} and Lemma~\ref{lem:zeg} from below. \medskip

Let us now specify how the interactions  are truncated. Without loss of generality we may assume that $i < j$. We define the set $I \subset \Lambda \times \Lambda$ as 
\begin{equation}
  \label{eq:def_I}
		I \coloneqq \bigl\{ (i,j) \in \Lambda \times \Lambda : |i-j| \ge R \bigr\} \setminus \bigl( (S_1 \times S_1) \cup (S_2 \times S_2) \bigr),
\end{equation}
where the sets $S_1, S_2 \subset \Lambda$ are given by 
$$S_1\coloneqq \left\{k \in \Lambda : k <i \right\} \qquad \mbox{and} \qquad S_2\coloneqq \left\{k \in \Lambda : k >j \right\}.$$
In the approximating measure $\mu_{\Lambda,q,I}$ we drop the interactions corresponding to the index set $I$ i.e.~the measure $\mu_{\Lambda,q,I}$ is given by the density
\begin{equation}
  \label{eq:def_aproximation_mu_q}
	\dx\mu_{\Lambda,q,I}(p^\Lambda) \coloneqq \frac{1}{Z_{\mu_{\Lambda,q,I}}} \e^{-\sum_{k \in \Lambda} \psi_{k,q}(p_k) - \sum_{(k,l) \in (\Lambda \times \Lambda) \setminus I} M_{kl} p_k p_l} \; \dx p^\Lambda.
\end{equation}

Note that the sites left from $i$ and right from $j$ are still allowed to interaction with each other. The reason is that the cut-off estimate of Lemma~\ref{lem:cut} below is rather sensitive to the amount of interaction we drop. This is also the reason why we cannot restrict ourselves to finite-range interaction on the whole system. Still, the method of Zegarlinski can still be applied as long as the sites between $i$ and $j$ interact with finite-range and that there is no interaction across $i$ and $j$.\medskip

In the next statement, we show that the covariance $\cov_{\mu_{\Lambda},q}(p_i,p_j)$ is close to the covariance $\cov_{\mu_{\Lambda},q,I}(p_i,p_j)$ if the cutoff $R$ is large enough.
\begin{lemma}\label{lem:cut}
	We consider the measures $\mu_{\Lambda,q}$ and $\mu_{\Lambda,q,I}$ given by~\eqref{eq:def_mu_q} and~\eqref{eq:def_aproximation_mu_q} respectively. Recall the number $0<\alpha<\infty$ from the decay of interaction condition~\eqref{e_decay_ising}. \newline
        If we choose the cutoff $R$ in the definition~\eqref{eq:def_I} of $I$  as
        \begin{equation*}
         R=|i-j|^{1-\epsilon} 
        \end{equation*}
for some $0< \epsilon < \alpha$ small enough, then there is a number $0 < \delta < \alpha$ such that 
        \begin{equation*}
          |\cov_{\mu_{\Lambda},q}(p_i,p_j) -\cov_{\mu_{\Lambda},q,I}(p_i,p_j) | \lesssim \frac{1}{|i-j|^\delta}, 
        \end{equation*}
        uniformly in $\Lambda$ and $q^\Lambda$.
\end{lemma}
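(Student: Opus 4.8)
The plan is to interpolate linearly between the two Hamiltonians and integrate the derivative of the covariance, which reduces the whole statement to the single estimate $\sum_{(k,l)\in I}|M_{kl}| \lesssim |i-j|^{-\delta}$.

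\emph{Interpolation.} For $t\in[0,1]$ let $\mu_t$ be the probability measure on $\mathds{R}^\Lambda$ with density proportional to $\exp\bigl(-\sum_{k\in\Lambda}\psi_{k,q}(p_k)-\sum_{(k,l)\in(\Lambda\times\Lambda)\setminus I}M_{kl}p_kp_l - t\sum_{(k,l)\in I}M_{kl}p_kp_l\bigr)$, so that $\mu_0=\mu_{\Lambda,q,I}$ and $\mu_1=\mu_{\Lambda,q}$. Each $\mu_t$ is invariant under $p^\Lambda\mapsto -p^\Lambda$ because the single-site potentials $\psi_{k,q}$ are even and the interaction is quadratic; hence $\E_{\mu_t}[p_i]=0$ and $\cov_{\mu_t}(p_i,p_j)=\E_{\mu_t}[p_ip_j]$. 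Differentiating in $t$ (the exchange of differentiation and integration being justified by the exponential moment bound of Lemma~\ref{lem:moments}) gives $\frac{\dx}{\dx t}\E_{\mu_t}[p_ip_j] = -\sum_{(k,l)\in I}M_{kl}\cov_{\mu_t}(p_ip_j,p_kp_l)$, and therefore $\cov_{\mu_{\Lambda,q}}(p_i,p_j)-\cov_{\mu_{\Lambda,q,I}}(p_i,p_j) = -\int_0^1\sum_{(k,l)\in I}M_{kl}\cov_{\mu_t}(p_ip_j,p_kp_l)\,\dx t$.

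\emph{Uniform bound on the mixed covariances.} Next I would show $|\cov_{\mu_t}(p_ip_j,p_kp_l)|\lesssim 1$ uniformly in $t$, $\Lambda$ and $q^\Lambda$. By Cauchy--Schwarz this follows from $\E_{\mu_t}[p_m^4]\lesssim 1$ for every $m$, which in turn follows from Lemma~\ref{lem:moments} once one checks that $\mu_t$ falls under its hypotheses after the identification $x\leftrightarrow p$, $\psi_i\leftrightarrow\psi_{i,q}$: the functions $\psi_{k,q}^b$ and $(\psi_{k,q}^b)'$ are bounded (by twice the bounds for $\psi_k^b$), the convex part $\psi_{k,q}^c$ is convex with $(\psi_{k,q}^c)'(0)=0$ and hence attains its global minimum at $0$, and replacing $M_{kl}$ by $tM_{kl}$ for $(k,l)\in I$ only decreases $\sum_{l\neq k}|M_{kl}|$ while leaving the diagonal untouched, so the strict diagonal dominance~\eqref{e_strictly_diag_dominant} is preserved with the same $\delta$. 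Thus the difference in the previous display is bounded in absolute value by a constant times $\sum_{(k,l)\in I}|M_{kl}|$.

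\emph{The combinatorial estimate.} It remains to bound $\sum_{(k,l)\in I}|M_{kl}|$. Set $M_0:=\{k\in\Lambda:i\le k\le j\}$; from~\eqref{eq:def_I} every pair $(k,l)\in I$ either has at least one endpoint in $M_0$, or has one endpoint in $S_1$ and the other in $S_2$. For pairs of the second kind one has $|k-l|\ge|i-j|$, and summing $|M_{kl}|\lesssim|k-l|^{-(2+\alpha)}$ over all such pairs (comparison with a double integral) yields a contribution $\lesssim|i-j|^{-\alpha}$. For pairs of the first kind, using $M_{kl}=M_{lk}$ and $|M_0|\le|i-j|+1$, the contribution is $\lesssim|i-j|\cdot\sum_{m\ge R}m^{-(2+\alpha)}\lesssim|i-j|\,R^{-(1+\alpha)}$. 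With $R=|i-j|^{1-\epsilon}$ this is $\lesssim|i-j|^{1-(1-\epsilon)(1+\alpha)}=|i-j|^{-(\alpha-\epsilon(1+\alpha))}$. Choosing $\epsilon$ small enough that $\epsilon<\alpha/(1+\alpha)$ and putting $\delta:=\alpha-\epsilon(1+\alpha)\in(0,\alpha)$ gives the claim for large $|i-j|$, while for bounded $|i-j|$ the assertion is trivial since both covariances are $\lesssim 1$ by Lemma~\ref{lem:moments}.

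\emph{Main obstacle.} The interpolation and the moment bounds are routine; the delicate point is the precise shape of the set $I$ and the balance hidden in the choice of $R$. One cannot simply drop all interactions of range $\ge R$: retaining the blocks $S_1\times S_1$ and $S_2\times S_2$ is exactly what keeps the dropped mass $\sum_{(k,l)\in I}|M_{kl}|$ small, since the numerous far-apart pairs lying entirely on one side of $[i,j]$ are not touched. On the other hand $R$ must be pushed almost up to $|i-j|$ — one needs $R^{1+\alpha}$ to dominate the $|i-j|$ sites of $M_0$ — while remaining polynomially smaller than $|i-j|$ so that the companion estimate (Lemma~\ref{lem:zeg}), which relies on finite interaction range between $i$ and $j$, still produces algebraic rather than exponential decay; reconciling these two competing requirements is the whole point of the choice $R=|i-j|^{1-\epsilon}$.
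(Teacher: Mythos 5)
Your proof is correct and follows essentially the same route as the paper: interpolate linearly between the Hamiltonians, express the difference as an integral of $-\sum_{(k,l)\in I}M_{kl}\cov_{\mu_t}(p_ip_j,p_kp_l)$, bound the mixed covariances uniformly via the moment estimates of Lemma~\ref{lem:moments}, and reduce to the $\ell^1$ bound $\sum_{(k,l)\in I}|M_{kl}|\lesssim |i-j|^{-\delta}$. Your partitioning of $I$ (pairs touching the middle block $M_0$ versus pairs straddling $S_1\times S_2$) is a minor repackaging of the paper's split according to where the first index lies, and yields the same exponent $\delta=\alpha-\epsilon(1+\alpha)$; your explicit verification that $\mu_t$ satisfies the hypotheses of Lemma~\ref{lem:moments} (evenness, minimum of $\psi^c_{k,q}$ at $0$, preserved diagonal dominance) is a welcome bit of care that the paper leaves implicit.
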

\begin{proof}[Proof of Lemma~\ref{lem:cut}]
  For $\lambda \in \left[0,1 \right]$ we introduce the auxiliary measures $\nu_\lambda$ by the density
  \begin{equation*}
  	\nu_{\lambda}(d p^\Lambda) \coloneqq \frac{1}{Z_{\nu_{\lambda}}} \e^{-\sum_{i \in \Lambda} \psi_{i,q}(p_i) - \sum_{(i,j) \in (\Lambda \times \Lambda) \setminus I} M_{ij} p_i p_j - \lambda \sum_{(i,j) \in I} M_{ij} p_i p_j} \; \dx p^\Lambda.  
  \end{equation*}
  It follows from the definition~\eqref{eq:def_mu_q} and~\eqref{eq:def_aproximation_mu_q} of the measures $\mu_{\Lambda,q}$ and $\mu_{\Lambda,q,I}$ that $$\nu_0 = \mu_{\Lambda,q,I} \qquad \mbox{and} \qquad \nu_1 =\mu_{\Lambda,q}.$$
  Therefore, the fundamental theorem of calculus yields the identity
  \begin{align*}
    |\cov_{\mu_{\Lambda},q}(p_i,p_j) -\cov_{\mu_{\Lambda},q,I}(p_i,p_j) | & = |\int p_i p_j \; \dx \mu_{\Lambda,q}(p^\Lambda) - \int p_i p_j \; \dx \mu_{\Lambda,q,I}(p^\Lambda) | \\  
    & \leq \int_0^1 |\frac{d}{d\lambda} \int p_i p_j \; \dx \nu_{\Lambda}(p^\Lambda)| \, \dx \lambda.
  \end{align*}
  Direct calculation yields that
  \begin{align*}
    \frac{d}{d\lambda} \int p_i p_j \; \dx \nu_{\Lambda}(p^\Lambda) & = \cov_{\nu_\lambda}(p_ip_j, \sum_{(k,l) \in I} M_{kl} p_k p_l) \\
    & = \sum_{(k,l) \in I} M_{kl} \cov_{\nu_\lambda}(p_ip_j, p_k p_l) .
  \end{align*}
By Hoelder's inequality and an application of the moment estimates of Lemma~\ref{lem:moments}, we see that 
\begin{align*}
  |\cov_{\nu_\lambda}(p_ip_j, p_k p_l)| & \leq \int |p_i p_j p_k p_l| \; \dx \nu_\lambda +  \int |p_i p_j| \; \dx \nu_\lambda   \int |p_k p_l| \; \dx \nu_\lambda  \lesssim 1
\end{align*}
uniformly in $\Lambda$, $q$, and $\lambda$. Therefore, the desired statement follows from the observation
\begin{equation}\label{e:approx_central_est}
  \sum_{(k,l) \in I} M_{kl} \lesssim \frac{1}{|i-j|^\delta}.
\end{equation}
Indeed, from the definition~\eqref{eq:def_I} of $I$, we have
\begin{align}
  \label{e:approx_central_est_partition}  \sum_{(k,l) \in I} M_{kl} & = \sum_{k: k < i} \sum_{l:(k,l) \in I} M_{k,l}  \\
& \qquad  + \sum_{k:i \leq k \leq j}  \sum_{l:(k,l) \in I} M_{k,l} +\sum_{k:k > j} \sum_{l:(k,l) \in I} M_{k,l} . 
\end{align}
Let us start with the estimation of the second term on the right hand side of the last identity. Using the decay property~\eqref{e_decay_ising} of the interaction $M_{kl}$ we see that
\begin{align*}
  \sum_{k: i \leq k \leq j}  \sum_{l:(k,l) \in I} M_{k,l} &\lesssim \sum_{k:i \leq k \leq j} \sum_{l:|k-l|\geq R} \frac{1}{|k-l|^{2+\alpha}} \\
  & \lesssim \sum_{k:i \leq k \leq j} \frac{1}{R^{1+\alpha}} = \frac{|i-j|}{(|i-j|^{1-\varepsilon})^{1+\alpha}} \lesssim  \frac{1}{|i-j|^{\delta}}.
\end{align*}
Now, we turn to the estimation of the first term on the right hand side of~\eqref{e:approx_central_est_partition}. Again, by using~\eqref{e_decay_ising} we have
\begin{align*}
  \sum_{k:k <i}  \sum_{l:(k,l) \in I} M_{k,l} & \lesssim \sum_{ k:k<i }  \frac{1}{|k - (i+ R)|^{1+\alpha}} \lesssim \frac{1}{R^\alpha}=  \frac{1}{|i-j|^{\alpha (1- \varepsilon)}}.
\end{align*}

 The third term can be estimated in the same way. Therefore, we have deduced the desired estimate~\eqref{e:approx_central_est} and completed the proof.
\end{proof}

In the next statement, we show by using the approach of~\cite[Lemma~4.5]{Zeg96} that the covariances of the measure $\mu_{\Lambda, q, I}$ decay algebraically. For this purpose, we have to work out the dependence on the range $R$ which was not necessary for deducing~\cite[Lemma~4.5]{Zeg96}.

\begin{lemma}\label{lem:zeg}
  Under the same assumptions as in Lemma~\ref{lem:cut}, it holds that there is a constant $\delta>0$ such that
  \begin{equation*}
    |\cov_{\mu_{\Lambda,q I}} (p_i, p_j)| = |\int p_i p_j \ \mu_{\Lambda,q I} (dp^{\Lambda}) | \lesssim \frac{1}{|i-j|^\delta} ,
  \end{equation*}
  uniformly in $\Lambda$ and $q^\Lambda$. 
\end{lemma}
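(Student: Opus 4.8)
The plan is to adapt Zegarlinski's decay-of-correlations argument (cf.~\cite[Lemma~4.5]{Zeg96}) to the finite-range measure $\mu_{\Lambda,q,I}$, keeping careful track of how the constants depend on the range $R$. The structure of the measure $\mu_{\Lambda,q,I}$ is favorable: by construction of the index set $I$ in~\eqref{eq:def_I}, the sites strictly between $i$ and $j$ interact only with finite range $R$, and there is no interaction connecting the block $S_1 = \{k < i\}$ to the block $S_2 = \{k > j\}$ across the segment $[i,j]$. Moreover, the single-site potentials $\psi_{k,q}$ are symmetric and their convex part $\psi_{k,q}^c$ has a global minimum at $0$, so the moment estimates of Lemma~\ref{lem:moments} apply uniformly in $\Lambda$ and $q^\Lambda$. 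The key representation is obtained by interpolating along the interaction strength on a "cut" separating $i$ from $j$: writing the segment $\{i, i+1, \dots, j\}$ as a chain, one inserts a parameter $t\in[0,1]$ multiplying the interactions across one of the roughly $|i-j|/R$ consecutive "slabs" of width $R$ between $i$ and $j$, differentiates in $t$, and uses $\cov_{\mu_{\Lambda,q,I}}(p_i,p_j)=\int_0^1 \frac{d}{dt}(\cdots)\,dt$ together with the fact that at $t=0$ the covariance vanishes because $p_i$ and $p_j$ become independent.

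First I would set up the chain decomposition: let $N \coloneqq \lfloor |i-j|/R \rfloor$ and define, for $m=1,\dots,N$, the "block cut" across which the interaction is turned off. Second, for a single cut I would run Zegarlinski's one-step estimate: differentiating the covariance in the interpolation parameter produces a sum $\sum_{(k,l)\text{ across the cut}} M_{kl}\,\cov_{\nu_t}(p_ip_j, p_kp_l)$, and after applying Hölder's inequality the covariance factors $\cov_{\nu_t}(p_ip_j,p_kp_l)$ are bounded by a universal constant via~\eqref{e:arbitrary_moment}. Since the spins on the segment have range $R$, the double sum over pairs $(k,l)$ across a single cut contains only $O(R^2)$ terms, and each $|M_{kl}|$ with $|k-l|<R$ contributes at most a constant; more precisely, using~\eqref{e_decay_ising} the total interaction weight across one slab boundary is $\sum_{k \le c < l,\ |k-l|< R}|M_{kl}| \lesssim R$. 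The point of Zegarlinski's argument is that one does not merely bound one cut but iterates: the single-cut bound is of the form $|\cov_{\mu_{\Lambda,q,I}}(p_i,p_j)| \le \kappa \cdot (\text{weight across that cut}) \cdot \sup_{\text{similar covariances over shorter distances}}$, and by choosing the cut optimally (or averaging over all $N$ possible cuts) one gains a factor that decays as the number of available cuts grows. Concretely, averaging the single-cut identity over the $N$ disjoint slab boundaries gives
\begin{equation*}
  |\cov_{\mu_{\Lambda,q,I}}(p_i,p_j)| \lesssim \frac{1}{N} \sum_{m=1}^{N} \Bigl( \text{weight across slab } m \Bigr) \cdot C \lesssim \frac{C \cdot (\text{total weight})}{N},
\end{equation*}
and since $N \asymp |i-j|/R = |i-j|^{\varepsilon}$ with $R = |i-j|^{1-\varepsilon}$, while the total interaction weight along the segment is $O(1)$ by summability of $\sum|M_{kl}|$, this yields $|\cov_{\mu_{\Lambda,q,I}}(p_i,p_j)| \lesssim |i-j|^{-\varepsilon}$, which is the claim with $\delta = \varepsilon$.

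The main obstacle, and the place where I expect the genuine work to lie, is making the iteration quantitatively honest: Zegarlinski's original argument produces a recursion whose constant must be shown to be $R$-independent (or to grow at most polynomially in $R$) so that the telescoping over $\asymp |i-j|/R$ cuts actually yields algebraic decay rather than being swamped by an exponentially growing prefactor. This is exactly the "quantification" the introduction flags as needing the improved moment bounds~\eqref{e:arbitrary_moment}: the factorials $k!$ there must combine with the combinatorial factors from expanding the interaction so that the resulting series converges with an $R$-uniform constant. A secondary technical point is handling the leftover interaction inside $S_1 \times S_1$ and $S_2 \times S_2$ (which is not truncated): one must check that this interaction does not connect $p_i$ to $p_j$ except through the segment, so that turning off a single slab boundary in $[i,j]$ genuinely decouples $i$ from $j$ — this is where the specific shape of $I$ in~\eqref{eq:def_I}, excluding $(S_1\times S_1)\cup(S_2\times S_2)$, is essential. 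Once the $R$-uniform recursion constant is established, the rest is the routine bookkeeping of Hölder's inequality plus Lemma~\ref{lem:moments}.
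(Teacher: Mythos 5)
Your interpolation-over-one-cut approach cannot give decay, and the ``averaging over cuts'' step hides a genuine error. Here is the issue concretely. Fix a single cut $c=b_m$ in the interior of $[i,j]$. After interpolating the cut-crossing interactions from their true values to zero and differentiating in the parameter, the bound you get is
\[
  |\cov_{\mu_{\Lambda,q,I}}(p_i,p_j)| \;\lesssim\; W_{b_m} \coloneqq \sum_{k<b_m\le l,\ |k-l|<R} |M_{kl}|,
\]
with the implicit constant coming from H\"older and Lemma~\ref{lem:moments}. But this weight is of order one, not small: using \eqref{e_decay_ising}, $W_{b_m}\lesssim\sum_{d=1}^{R} d\cdot d^{-(2+\alpha)}=\sum_{d\le R} d^{-(1+\alpha)}\lesssim 1$, and it is \emph{bounded below} by a constant as well, since the interactions at distance one or two alone already contribute $\Theta(1)$. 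So the single-cut bound is only $|\cov|\lesssim 1$. Taking the minimum over the $N$ cuts does not improve this, because $\min_m W_{b_m}\asymp 1$, not $\lesssim 1/N$. Your displayed chain $|\cov|\lesssim\frac1N\sum_m W_{b_m}\lesssim\frac{\text{(total weight)}}{N}$ is not a valid inference (the covariance is bounded by each single-cut bound, hence by the minimum, not by the average), and even if it were, $\sum_m W_{b_m}\asymp N$, not $O(1)$ --- each cut carries $\Theta(1)$ weight, and these contributions do not overlap, so the sum grows linearly in $N$. Your intermediate claim that the weight across one slab is $\lesssim R$ and the later claim that the total over all cuts is $O(1)$ are also mutually inconsistent. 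In short, a one-parameter interpolation across a single cut, followed by a pigeonhole over cut positions, yields no decay in $|i-j|$.

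The missing ingredient is a \emph{multiplicative} gain from the $N\asymp|i-j|^{\varepsilon}$ slab boundaries, not an additive one, and supplying it is the actual content of the paper's proof. There, the symmetry of the potentials $\psi_{k,q}$ is used to derive an exact identity $\int p_ip_j\,\dx\mu_{\Lambda,q,I}=\int p_ip_j\,f_L(p^\Lambda)\,\dx\mu_{\Lambda,q,I}$, where $f_L$ is a \emph{product} of $L$ hyperbolic-tangent factors, one per slab boundary (this is Zegarlinski's $\tanh$-representation, Eq.~\eqref{eq:Zeg_key_repres}, proved by spin-flip substitutions $p_k\mapsto -p_k$ on successive half-lines). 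The decay then comes from splitting into the event where at least $L/2$ of the block-interaction sums lie below a threshold $T$ (so the $\tanh$ product is as small as $(\tanh 2T)^{L/2}$) and its complement, whose probability is controlled by Markov's inequality and the exponential moment estimate \eqref{e:exponential_moment}. Your write-up gestures at a recursion involving ``$\sup$ of similar covariances over shorter distances,'' but your interpolation does not produce such a recursion (the derivative yields covariances of \emph{products} $\cov_{\nu_t}(p_ip_j,p_kp_l)$, which you bound by a constant rather than by a shorter-distance covariance), and no alternative mechanism for the multiplicative gain is supplied. You correctly flag that this is where the genuine work lies, but the gap is not merely quantitative constant-tracking: without the product structure of $f_L$ (or a substitute for it), the argument does not yield any decay at all.
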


\begin{proof}[Proof of Lemma~\ref{lem:zeg}]
	It follows from definition~\eqref{eq:def_aproximation_mu_q} of the measure~$\mu_{\Lambda,q, I}$ that 
        \begin{equation*}
          	\mu_{\Lambda,q,I}(d p^\Lambda) \coloneqq \frac{1}{Z_{\mu_{\Lambda,q,I}}} \e^{-\sum_{k \in \Lambda} \psi_{k,q}(p_k) - \sum_{k,l \in \Lambda} \tilde M_{kl} p_k p_l} \; \dx p^\Lambda,
        \end{equation*}
where the matrix $\tilde M =(M_{kl})$ is given by the entries
\begin{equation*}
  \tilde M_{kl} \coloneqq
  \begin{cases}
    M_{i,j}, & \mbox{if } (i,j)\in \Lambda \times \Lambda \setminus I \\
    0, & \mbox{if } (i,j)\in  I .
  \end{cases}
\end{equation*}
We decompose the sites in $\Lambda$ into $L\coloneqq \bigl[ |i-j|^\varepsilon  \bigr]$ blocks, each containing at least $R=|i-j|^{1-\varepsilon}$ many sites. For this purpose we pick a finite sequence $(b_k)_{k=1}^{L-1}$ of integers (not necessarily contained in $\Lambda$) that satisfy $i < b_1 < b_2 < \cdots < b_L \le j$ and $b_k - b_{k-1} \ge R$ for $1 \le k \le L$. This introduces a partition of $\Lambda$ into (possibly empty) blocks
	\[
		B_n \coloneqq \bigl\{ l \in \Lambda : b_n \le l < b_{n+1} \bigr\}
	\]
	for $0 \le n \le L$, where for notational simplicity set $b_0 \coloneqq -\infty$ and $b_{L+1} \coloneqq \infty$. \medskip

The key ingredient for the approach of Zagarlinski~\cite{Zeg96} is the following representation of the covariance, namely for all $0 \leq m \leq L$ 
\begin{equation}
  \label{eq:Zeg_key_repres}
		\int p_i p_j \; \dx \mu_{\Lambda,q,I} = \int p_i p_j f_m(p^\Lambda) \; \dx\mu_{\Lambda,q,I},  
\end{equation}
where the function $f_m$ is given by 
	\[
		f_m(p) \coloneqq 		= \prod_{k=1}^m \tanh\bigl(-2 \sum_{ k \in B_{k-1}} \sum_{ l \in B_k} M_{kl} p_k p_l \bigr).
	\]
We show the identity~\eqref{eq:Zeg_key_repres} by induction. For $m=0$, the identity~\eqref{eq:Zeg_key_repres} holds since $f \equiv 1$. Let us consider now $m\geq 1$. Substituting $p_i$ by $-p_i$ for $i < b_m$ and exploiting the fact that $f_{m-1}$ is invariant under this substitution yields the identity
	\begin{equation*}
		\int p_i p_j f_{m-1}(p^\Lambda) \; \dx\mu_{\Lambda,q,I} = - \int p_i p_j f_{m-1}(p^\Lambda) \e^{4\sum_{i < b_m \le j} M_{ij} p_i p_j}  \dx\mu_{\Lambda,q}.
	\end{equation*}
Using the last identity we directly get that
	\begin{align}
	& \int p_i p_j f_{m-1}(p^\Lambda) \; \dx\mu_{\Lambda,q,I} \notag \\
			& \qquad = \int p_i p_j f_{m-1}(p^\Lambda) \ \frac{1}{2}\Bigl(1 - \e^{4\sum_{k < b_m \le l} M_{kl} p_k p_l} \Bigr) \dx\mu_{\Lambda,q}. \label{e:eq:Zeg_key_repres_subst}
	\end{align}
By the assumption on the support of $\tilde M$ it holds that
\[
   \sum_{i < b_m \le j} \tilde M_{ij} p_i p_j =  \sum_{i \in B_{m-1}} \sum_{j \in B_m} M_{ij} p_i p_j.
\]
Therefore, by using the definition of $f_m$ one can rewrite the right hand side of the identity~\eqref{e:eq:Zeg_key_repres_subst} as
	\begin{align*}
          & \int p_i p_j f_{m-1}(p^\Lambda) \ \frac{1}{2}\Bigl(1 - \e^{4\sum_{k < b_m \le l} M_{kl} p_k p_l} \Bigr) \dx\mu_{\Lambda,q} \\
			& \qquad = \int p_i p_j f_m(p^\Lambda) \cdot \frac{1}{2}\Bigl(1 + \e^{4\sum_{i < b_m \le j} M_{ij}p_ip_j} \Bigr) \dx\mu_{\Lambda,q}(p) \\
        \end{align*}

 Applying now again the substitution $p_i$ by $-p_i$ for $i < b_m$ to the second summand in the last identity, one sees that 
	\begin{align*}
          & \int p_i p_j f_m(p^\Lambda) \cdot \frac{1}{2}\Bigl(1 + \e^{4\sum_{i < b_m \le j} M_{ij}p_ip_j} \Bigr) \dx\mu_{\Lambda,q}(p) \\
			& \qquad = \int p_i p_j f_m(p^\Lambda) \dx\mu_{\Lambda,q}.
	\end{align*}
Here, we also used the fact that $f_m$ becomes $- f_m$ in the last substitution. So we have overall deduced the identity  
	\begin{align*}
	& \int p_i p_j f_{m-1}(p^\Lambda) \; \dx\mu_{\Lambda,q,I} = \int p_i p_j f_m(p^\Lambda) \dx\mu_{\Lambda,q},
	\end{align*} 
which yields the desired formula~\eqref{eq:Zeg_key_repres}. \medskip



For some $T >0$ that is fixed later we consider the event
	\[
		A \coloneqq \Bigl\{ p^\Lambda  : \#\Bigl\{1 \leq n \leq L \ : \  \sum_{k \in B(n-1)}  \sum_{ l \in B(n)}|M_{kl} p_k p_l| \ge T \Bigr\} \le \frac{L}{2} \Bigr\}.
\]
In view of the identity~\eqref{eq:Zeg_key_repres}, the statement of Lemma~\ref{lem:zeg} follows if we show the following two estimates, namely
\begin{equation}
  \label{eq:lem_zeg_first_estimate}
  | \int_A p_i p_j f_L(p^\Lambda) \; \dx \mu_{\Lambda, q, I} | \lesssim \frac{1}{|i-j|^\delta} 
\end{equation}
and
\begin{equation}
  \label{eq:lem_zeg_second_estimate}
  | \int_{A^c} p_i p_j f_L(p^\Lambda) \; \dx \mu_{\Lambda, q, I} | \lesssim \frac{1}{|i-j|^\delta} .
\end{equation}
Let us first derive the estimate~\eqref{eq:lem_zeg_first_estimate}. We set $T= \frac{1}{4} \ln L^{\frac{1}{2}}$. It follows from the definition of the event $A$ that for $p^\Lambda \in A$ it holds 
	\begin{align*}
		|f(p^\Lambda)| &\le \bigl( \tanh(2T) \bigr)^{\frac{L}{2}}= e^{\frac{L}{2} \ln \left( 1- \frac{2}{\exp(4T) + 1} \right)} \\ 
                & \leq e^{-\frac{L}{\exp(4T)+1}} \lesssim e^{-L^{\frac{1}{2}}} \lesssim  e^{-|i-j|^{\frac{\varepsilon}{3}}}. 
	\end{align*}
From this point-wise estimate, the desired inequality~\eqref{eq:lem_zeg_first_estimate} follows immediately. \newline
Now, let us turn to the
estimate~\eqref{eq:lem_zeg_second_estimate}. We will show that the complementary event $A^c$ has very small probability. However, we deduce a preliminary estimate first. More precisely, we derive that there is a constant $a>0$ such that
\begin{align}
  & \int e^{\frac{a}{L} \sum_{n=1}^L \sum_{k \in B(n-1) } \sum_{l \in B(n)} |M_{kl} p_k p_l |} \; \dx \mu_{\Lambda,q, I} \lesssim 1.   \label{eq:Zegarlinski_moment}
\end{align}
Indeed, by the decay property~\eqref{e_decay_ising} of the interaction $M_{kl}$, we can estimate for any $1 \leq n \leq L$ using the ad-hoc notation $d_{k,n} := \dist(k, B(n)) $
\begin{align*}
 &  \sum_{k \in B(n-1)} \sum_{l \in B(n)} |M_{kl} p_k p_l|  \leq \sum_{k \in B(n-1)} p_k^2 \sum_{l \in B(n)} |M_{kl}| + \sum_{l \in B(n)} p_l^2 \sum_{k \in B(n-1)} |M_{kl} | \\
  & \qquad \leq C\sum_{k \in B(n-1)} \frac{1}{(d_{k,n})^{1+ \alpha}} p_k^2  + C \sum_{l \in B(n)} \frac{1}{ (d_{l,n-1})^{1+ \alpha}} p_l^2 .
\end{align*}
Hence, we can estimate
\begin{align*}
  & \int e^{\frac{a}{L} \sum_{n=1}^L \sum_{k \in B(n-1) } \sum_{l \in B(n)} |M_{kl} p_k p_l |} \; \dx \mu_{\Lambda,q, I} \\
  & \qquad \leq    \int e^{\frac{Ca}{L} \sum_{n=1}^L \sum_{k \in B(n)}  \left( \frac{1}{(d_{k,n-1})^{1+ \alpha}} + \frac{1}{(d_{k,n})^{1+ \alpha}}  \right) p_k^2} \; \dx \mu_{\Lambda,q, I} .
\end{align*}
 Because for small enough $a \ll 1$ it holds
 \begin{equation*}
   \frac{2Ca}{L} \sum_{n=1}^L \sum_{k \in B(n)}  \left( \frac{1}{(d_{k,n-1})^{1+ \alpha}} + \frac{1}{(d_{k,n})^{1+ \alpha}}  \right) \leq 1,
 \end{equation*}
an application of Hoelder's inequality and of the moment estimate of Lemma~\ref{lem:moments} yields the desired estimate~\eqref{eq:Zegarlinski_moment} i.e.
\begin{align}
  & \int e^{\frac{a}{L} \sum_{n=1}^L \sum_{k \in B(n-1) } \sum_{l \in B(n)} |M_{kl} p_k p_l |} \; \dx \mu_{\Lambda,q, I} \notag \\
  & \qquad \leq  \prod_{n=1}^L \prod_{ k \in B(n) } \left( \int e^{\frac{Ca}{2}  p_k^2} \; \dx \mu_{\Lambda,q, I} \right)^{\frac{a}{2L} \left( \frac{1}{(d_{k,n-1})^{1+ \alpha}} + \frac{1}{(d_{k,n})^{1+ \alpha}}  \right) } \lesssim 1. \notag
\end{align}

Now, we derive~\eqref{eq:lem_zeg_second_estimate}, which is the last missing ingredient for the proof. We obtain from Markov's inequality that
\begin{align*}
  \e^{\frac{1}{2}aT} \mu_{\Lambda,q}(A^c)	\le \int e^{\frac{a}{L} \sum_{n=1}^L \sum_{k \in B(n-1) } \sum_{l \in B(n)} |M_{kl} p_k p_l |} \; \dx \mu_{\Lambda,q, I} \lesssim 1.
\end{align*}
Because $$T = \frac{1}{4} \ln L^{\frac{1}{2}} \qquad \mbox{and} \qquad  L \leq \frac{1}{|i-j|^{\frac{\varepsilon}{2}} } $$ the last inequality yields
\begin{align*}
  \mu_{\Lambda,q}(A^c) \lesssim \e^{-\frac{1}{8}a \ln L^{\frac{1}{2}} } \lesssim  \left(\frac{1}{|i-j|^{\frac{\varepsilon}{2}} } \right)^{\frac{1}{8}a} =
 \frac{1}{|i-j|^{\frac{a\varepsilon}{16}} }.
\end{align*}
The last inequality already yields the desired estimate~\eqref{eq:lem_zeg_second_estimate} by a combination of the Hoelder's inequality, the observation $|f_L(p^\Lambda)|\leq 1 $, and an application of the moment estimate of Lemma~\ref{lem:moments}.
\end{proof}

\section{Improving the preliminary covariance estimate of Proposition~\ref{p_mr_decay_reduced_measure}: final step of the proof of Theorem~\ref{p_mr_decay}}\label{s_cov_est_postprocess}
This section we will complete the proof of~Theorem~\ref{p_mr_decay}. Note that in Proposition~\ref{p_mr_decay_reduced_measure} we have deduced the following decay of correlations i.e. 
  \begin{equation}
  |\cov_{\mu_{\Lambda,q,|M|} } (x_i,x_j)| \lesssim \ \frac{1}{|i-j|^{\tilde \alpha}+1}. \label{e_prelim_decay_corr}
\end{equation}
for some $0 < \tilde \alpha < \alpha$. In this section, we will use this estimate and a recursive scheme to improve the decay of correlations. More precisely, we will deduce the following statement. 
\begin{proposition}\label{p_right_decay_attract_inter}
  The measure $\mu_\Lambda,q,|M|$ has the following decay of correlations
  \begin{equation}\label{e_right_decay_associated_measure}
  \cov_{\mu_{\Lambda,q,|M|} } (x_i,x_j) \lesssim \ \frac{1}{|i-j|^{2+\hat \alpha}+1}.
\end{equation}
for some constant $0< \hat \alpha$.
\end{proposition}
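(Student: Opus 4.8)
The plan is to run a bootstrap (postprocessing) argument based on the Lebowitz inequality for the ferromagnetic measure $\mu_{\Lambda,q,|M|}$, in the spirit of \cite[Section~3]{Simon} and \cite{FroeSpenc}. The starting point is the preliminary decay~\eqref{e_prelim_decay_corr} with exponent $\tilde\alpha$, which a priori may be tiny. The Lebowitz inequality furnishes, for sites $i<j$ and any intermediate site $k$ (or, more usefully, an intermediate block), an inequality of the schematic form
\[
  \cov_{\mu_{\Lambda,q,|M|}}(x_i,x_j)
  \;\lesssim\; \sum_{k,l} |M_{kl}| \,\cov_{\mu_{\Lambda,q,|M|}}(x_i,x_k)\,\cov_{\mu_{\Lambda,q,|M|}}(x_l,x_j),
\]
where the sum runs over a separating cut between $i$ and $j$. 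This is the continuous-spin analogue of the finite-differencing identity for the Ising model; the fact that $\mu_{\Lambda,q,|M|}$ has attractive interactions and symmetric single-site potentials (so all odd moments vanish and Griffiths/GKS-type inequalities apply, cf.~Lemma~\ref{p_second_GKS}) is exactly what makes the Lebowitz inequality available here, and is why the reduction of Section~\ref{s_griffiths_estimates} was carried out.

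First I would fix the combinatorial setup: split the interval $[i,j]$ into the ``bond'' at the cut and use~\eqref{e_decay_ising} to control $\sum|M_{kl}|$ across a cut located near the midpoint, which costs a factor $\lesssim |i-j|^{-(1+\alpha)}$ per summation (one power absorbed by the width of the sum). Then, if we have the inductive hypothesis $\cov(x_i,x_j)\lesssim |i-j|^{-\beta}$ for some $\beta>0$, plugging it into the Lebowitz inequality and summing the convolution $\sum_k |k-i|^{-\beta}|k-j|^{-\beta}$ over a suitable range (together with the decay of $M$) upgrades the exponent to roughly $\min(2\beta-1,\;\beta+\alpha,\ldots)$ up to logarithmic losses — the precise recursion $\beta \mapsto \Phi(\beta)$ must be written out carefully. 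The key observation is that the fixed point / stable region of this recursion lies strictly above $2$: iterating, the exponent increases at each step as long as it is below $1+\alpha$ (roughly), so after finitely many steps we land at an exponent $2+\hat\alpha$ with $\hat\alpha>0$, at which point the recursion stabilizes (the gain saturates at the ``input'' decay $2+\alpha$ of the interaction, minus an $\varepsilon$ lost to the summations). One must check that the implied constants $\lesssim$ do not blow up along the finitely many iterations — this is fine because only boundedly many steps are needed to pass from $\tilde\alpha$ to $2+\hat\alpha$.

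The main obstacle I anticipate is twofold. First, one must state and prove (or carefully cite) the correct form of the Lebowitz inequality for this class of unbounded continuous-spin measures; as with the GKS inequality in Lemma~\ref{p_second_GKS}, this likely requires a Sylvester-type expansion of the exponential and the evenness of $\psi_{k,q}$, plus a justification of convergence/integrability using the exponential moment bound~\eqref{e:exponential_moment} of Lemma~\ref{lem:moments}. Second, the bookkeeping of the recursion near the threshold — ensuring the exponent genuinely crosses $2$ rather than stalling just below it, and handling the borderline logarithmic factors (e.g.\ when $2\beta-1$ or convolution sums produce a $\log|i-j|$) — needs a small $\varepsilon$-room argument: one proves decay of order $2+\hat\alpha$ for some $\hat\alpha<\alpha$ strictly, never reaching $2+\alpha$, which is harmless and is exactly the statement of Proposition~\ref{p_right_decay_attract_inter}. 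Everything else (the midpoint cut, the convolution estimates $\sum |k-i|^{-\beta}|k-j|^{-\gamma}\lesssim |i-j|^{-\min(\beta,\gamma,\beta+\gamma-1)}$ up to logs, and the base case from~\eqref{e_prelim_decay_corr}) is routine.
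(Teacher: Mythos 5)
Your overall strategy --- bootstrap the exponent $\beta$ in Simon's inequality, starting from the preliminary decay $\tilde\alpha$ and iterating a bounded number of times until the exponent exceeds $2$ --- is a genuinely different route from the one the paper takes. The paper does \emph{not} bootstrap the exponent: it applies Simon's inequality with a \emph{fixed}-radius ball $A_i=\{l:|i-l|\le a\}$, rewrites the result as a linear recursion $\cov(x_i,x_j)\le\sum_k J_{ik}\cov(x_k,x_j)+R_{ij}$, shows that the free parameter $a$ can be chosen so large (using the preliminary decay $\tilde\alpha$ and the interaction decay $\alpha$) that $\sum_k J_{lk}\le\tilde c<1$, and then iterates this inequality $l\sim\log|i-j|$ times. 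The geometric factor $\tilde c^l$ then produces the polynomial decay, while the accumulated remainder $R$ is controlled by a pigeonhole argument: any chain $i\to k_1\to\cdots\to k_{l-1}\to j$ must contain at least one jump of size $\gtrsim|i-j|/l$, and that single jump already yields a factor $\lesssim|i-j|^{-(2+\hat\alpha)}$. Your approach, if executed correctly, would be somewhat shorter because only a fixed finite number of iterations is needed and no pigeonhole bookkeeping is required.

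However, your stated recursion $\beta\mapsto\min(2\beta-1,\,\beta+\alpha)$ is not correct and would not even get off the ground: for $\beta<1$ one has $2\beta-1<\beta$, so the exponent would \emph{decrease}, and since the base case $\tilde\alpha$ from Proposition~\ref{p_mr_decay_reduced_measure} is a priori tiny, the iteration would stall immediately. The source of the error is the one-variable convolution estimate $\sum_k|k-i|^{-\beta}|k-j|^{-\gamma}\lesssim|i-j|^{-\min(\beta,\gamma,\beta+\gamma-1)}$ you quote: Simon's inequality across a cut set $A$ is a \emph{double} sum over pairs $(l,n)\in A\times A^c$ weighted by $|M_{ln}|$, not a single sum over an intermediate site. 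For the midpoint cut $A=\{k:k\le m\}$, $m\approx(i+j)/2$, the dominant contribution comes from $l,n$ both close to $m$, where $\cov(x_i,x_l)\cov(x_n,x_j)\approx|i-j|^{-2\beta}$ and $\sum_{l,n\text{ near }m}|M_{ln}|=O(1)$; the endpoint contribution $l=i,\,n=j$ gives $|M_{ij}|\lesssim|i-j|^{-(2+\alpha)}$; and a careful estimate of the full double sum (split on $w=n-l$) shows the one-step map is in fact $\beta\mapsto\min(2\beta,\,2+\alpha)$ up to $\eps$-losses. With this corrected recursion the bootstrap does succeed: starting from $\tilde\alpha>0$, after $O(\log(\alpha/\tilde\alpha))$ steps one crosses $2$ and saturates just below $2+\alpha$, which matches the conclusion of Proposition~\ref{p_right_decay_attract_inter}. (Also note the saturation threshold is $2+\alpha$, not $1+\alpha$ as written in one sentence of your plan.) So the gap is concrete but repairable: you must redo the convolution estimate as a genuine double sum across the cut and obtain the correct $\Phi(\beta)$ before the rest of your outline goes through.
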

Once the decay~\eqref{e_right_decay_associated_measure} is deduced, a combination of~\eqref{e:repre_covariance},~\eqref{e_repr_covariance_moment},~\eqref{eq:covariance_domination},  and~\eqref{e_right_decay_associated_measure} yields the statement of Theorem~\ref{p_mr_decay}.\medskip

As noted before, the proceeding to deduce~\eqref{e_right_decay_associated_measure} is motivated by the discrete case. In order to establish a recursive relation for the covariance, we need that the measure $\mu_\Lambda,q,|M|$ is a Lebowitz measure i.e.
\begin{lemma}\label{p_Lebowitz_inequality}
  Assume that the Hamiltonian $H: \mathds{R}^\Lambda \to \mathds{R}$ is given by 
  \begin{equation*}
    H(p^\Lambda) = \sum_{i \in \Lambda} V_i(x_i) - \sum_{i,j \in \Lambda} M_{ij} p_i p_j,
  \end{equation*}
  where the single-site potentials are symmetric and the interaction is ferromagnetic i.e.
  \begin{equation*}
    V_i(p_i)= V_i (-p_i) \qquad \mbox{and} \qquad M_{ij} \geq 0.
  \end{equation*}
  Then the associated measure $\mu(dp^\Lambda) = \frac{1}{Z_\mu} \exp(- H(p^\Lambda)) \dx p^\Lambda$ is a Lebowitz measure i.e.~it satisfies the Lebowitz inequality
  \begin{align}
       & \int p_i p_j p_k p_l \dx \mu(p^\Lambda) \leq \cov_\mu(x_i, x_j) \cov_\mu( x_k, x_l) \notag \\
       &  \qquad + \cov_\mu(x_i, x_k) \cov_\mu( x_j, x_l) + \cov_\mu(x_i, x_l) \cov_\mu( x_k, x_j).     \label{eq:Lebowitz}
  \end{align}
As a direct consequence of Lemma~\ref{p_Lebowitz_inequality} we get that the measure $\mu_{\Lambda,q,|M|}$ is a Lebowitz measure and satisfies~\eqref{eq:Lebowitz}.
 \end{lemma}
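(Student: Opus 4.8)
The plan is to mimic the doubling‑of‑variables and exponential‑expansion argument already used for the second GKS inequality (Lemma~\ref{p_second_GKS}), the genuinely new input being a combinatorial comparison of the terms it produces. Since every $V_i$ is even, the measure $\mu$ is invariant under $p^\Lambda\mapsto-p^\Lambda$; hence $\int p_i\dx\mu=0$ and $\int p_ip_j\dx\mu=\cov_\mu(p_i,p_j)$, so~\eqref{eq:Lebowitz} is equivalent to $Q(i,j,k,l)\ge 0$, where
\[
  Q(i,j,k,l)\coloneqq \cov_\mu(p_i,p_j)\cov_\mu(p_k,p_l)+\cov_\mu(p_i,p_k)\cov_\mu(p_j,p_l)+\cov_\mu(p_i,p_l)\cov_\mu(p_j,p_k)-\int p_ip_jp_kp_l\dx\mu
\]
is the second Ursell function of $\mu$, up to an overall sign. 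First I would double the variables: let $\tilde p^\Lambda$ be an independent copy of $p^\Lambda$ under $\mu\otimes\mu$ and pass to the coordinates $a_m\coloneqq\tfrac12(p_m+\tilde p_m)$, $b_m\coloneqq\tfrac12(p_m-\tilde p_m)$. As at the beginning of Section~\ref{s_covariance_estimate},
\[
  H(p^\Lambda)+H(\tilde p^\Lambda)=\sum_{m\in\Lambda}\bigl(V_m(a_m+b_m)+V_m(a_m-b_m)\bigr)-2\sum_{k,l\in\Lambda}M_{kl}(a_ka_l+b_kb_l),
\]
so in the new coordinates the doubled measure is a \emph{ferromagnetic} system (recall $M_{kl}\ge 0$) in which the $a$‑variables and the $b$‑variables are coupled only through the even single‑site weights $\e^{-V_m(a_m+b_m)-V_m(a_m-b_m)}$; these are invariant, $V_m$ being even, under $a_m\mapsto-a_m$, under $b_m\mapsto-b_m$, and under $a_m\leftrightarrow b_m$. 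Consequently the doubled measure carries the global symmetries $a\mapsto-a$, $b\mapsto-b$ and $a\leftrightarrow b$, and in particular any monomial in the $a$'s and $b$'s with an odd total number of $a$‑factors, or with an odd total number of $b$‑factors, integrates to zero.

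Next I would rewrite $Q$ over the doubled system, using $\int p_ip_jp_kp_l\dx\mu=\langle p_ip_jp_kp_l\rangle$, $\cov_\mu(p_i,p_j)\cov_\mu(p_k,p_l)=\langle p_ip_j\tilde p_k\tilde p_l\rangle$ and likewise for the other two products, where $\langle\cdot\rangle$ denotes expectation under $\mu\otimes\mu$. Substituting $p_m=a_m+b_m$, $\tilde p_m=a_m-b_m$, expanding, discarding the monomials killed by the sign symmetries above, and then using the swap $a\leftrightarrow b$ to fold the surviving mixed terms together, a direct computation reduces $Q$ to a positive multiple of
\[
  \langle a_ia_ja_ka_l\rangle-\langle a_ia_jb_kb_l\rangle-\langle a_ia_kb_jb_l\rangle-\langle a_ia_lb_jb_k\rangle.
\]
It therefore remains to prove that this quantity is nonnegative for the ferromagnetic, even‑single‑site system in the $(a,b)$‑variables in which the $a$‑ and $b$‑blocks do not interact directly.

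For that I would expand $\e^{2\sum M_{kl}a_ka_l}$ and $\e^{2\sum M_{kl}b_kb_l}$ into power series in the monomials $\prod(a_ka_l)^{n_{kl}}$ and $\prod(b_kb_l)^{\tilde n_{kl}}$ with nonnegative coefficients, exactly as in the proof of Lemma~\ref{p_second_GKS}. Each of the four correlations above then becomes a nonnegative sum, over pairs $(n,\tilde n)$ of ``current'' configurations on $\Lambda\times\Lambda$, of products of one‑site integrals $\int a_m^{\alpha_m}b_m^{\beta_m}\e^{-V_m(a_m+b_m)-V_m(a_m-b_m)}\dx a_m\dx b_m$, which vanish unless every $\alpha_m$ and $\beta_m$ is even and are nonnegative otherwise. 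The configurations contributing to $\langle a_ia_ja_ka_l\rangle$ are exactly those whose $a$‑current has odd degree precisely on $\{i,j,k,l\}$ and whose $b$‑current has no odd‑degree site, while $\langle a_ia_jb_kb_l\rangle$ receives those whose sources split as $\{i,j\}$ for the $a$‑current and $\{k,l\}$ for the $b$‑current. The crux is then a switching‑type map merging $(n,\tilde n)$ feeding a mixed term into a configuration feeding $\langle a_ia_ja_ka_l\rangle$ --- note that the superposition $n+\tilde n$ has odd degree exactly on $\{i,j,k,l\}$ --- which one must show can be made injective and weight‑nondecreasing. Alternatively one can approximate each $\e^{-V_m}\dx p_m$ from within the Griffiths--Simon class by blocks of Ising spins with ferromagnetic internal couplings, invoke the classical Ising switching lemma, and pass to the limit. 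Granting this, $Q\ge 0$, which is~\eqref{eq:Lebowitz}; the assertion for $\mu_{\Lambda,q,|M|}$ is then the special case $V_m=\psi_{m,q}$, even by construction, with couplings $|M_{kl}|\ge 0$.

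\textbf{Main obstacle.} The switching/merging step, together with the bookkeeping of the one‑site moment factors --- which, unlike in the Ising case, do not factor over the $a$‑ and $b$‑variables --- is where essentially all the work lies; the doubling and the symmetry reductions are routine. It is also the step where some structure of the $V_i$ beyond mere evenness is genuinely needed (finiteness and positivity of the one‑site integrals), in line with the convexity and the quadratic confinement present in the measures $\mu_{\Lambda,q,|M|}$ to which the lemma is applied.
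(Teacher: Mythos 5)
Your doubling reduction is correct and is exactly how Lebowitz's argument begins: writing $p_m = a_m+b_m$, $\tilde p_m = a_m-b_m$ and using the three symmetries $a\mapsto -a$, $b\mapsto -b$, $a\leftrightarrow b$ of the doubled measure, one indeed finds
\begin{equation*}
  Q(i,j,k,l) = 4\bigl[\langle a_ia_ja_ka_l\rangle - \langle a_ia_jb_kb_l\rangle - \langle a_ia_kb_jb_l\rangle - \langle a_ia_lb_jb_k\rangle\bigr],
\end{equation*}
so $Q\ge 0$ is equivalent to this reduced inequality. For comparison, the paper itself does not prove the lemma at all; it simply points to Sylvester. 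Your write-up is therefore the only actual argument on the table, and the setup half of it is fine.

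The gap is precisely where you flag it, and it is a genuine one, not a bookkeeping step. The proposal ends with ``Granting this, $Q\ge 0$,'' but the reduced inequality is the entire content of the Lebowitz inequality, and neither route you sketch currently closes it. The switching lemma relies essentially on $\sigma^2=1$: for continuous spins the one-site factors $\int a_m^{\alpha_m}b_m^{\beta_m}\e^{-V_m(a_m+b_m)-V_m(a_m-b_m)}\dx a_m\dx b_m$ do not collapse, do not factor over $a$ and $b$, and there is no canonical current-rerouting map that is weight-nondecreasing---constructing one \emph{is} the theorem. The Griffiths--Simon approximation is a legitimate alternative, but it requires each single-spin density to lie in the GS class (a limit of ferromagnetic Ising block-spin distributions), and that is a substantive condition on $\e^{-\psi_{k,q}}$ which is not checked and is not obviously implied by ``even convex plus bounded even.'' Your closing remark that some structure beyond evenness is ``genuinely needed'' is in fact sharper than you perhaps realize: as stated, with only evenness and ferromagnetism, the lemma is false. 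Already for a single noninteracting site, an even density spreading its mass between a neighbourhood of $0$ and neighbourhoods of $\pm N$ gives $\langle p^4\rangle > 3\langle p^2\rangle^2$, violating~\eqref{eq:Lebowitz} with $i=j=k=l$; such a density can be smooth and even, though it cannot be written as $\e^{-\psi^c-\psi^b}$ with $\psi^c$ convex (min at $0$) and $\psi^b$ uniformly bounded. So a correct proof must use the perturbed-convexity of $\psi_{k,q}$, not merely its evenness, and your sketch does not yet do so. In short: the reduction is right and matches the classical route, the identification of the obstacle is perceptive, but the crux is not proved and the two escape routes offered each need an additional, nontrivial input to go through.
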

The statement of Lemma~\ref{p_Lebowitz_inequality} is rather standart and a proof can be found for example by Sylvester in~\cite{Sylvester}. \smallskip

The key tool for establishing a recursive relation for the covariance is the following simple consequence of the Lebowitz inequality, which is due to Simon (cf.~\cite[Theorem~3.1]{Simon}).
\begin{proposition}
  Let $A \subset \Lambda$. For convenience we write $n \notin A$ to indicate that $n \in \Lambda \backslash A$. Because~$\mu_{\Lambda,q,|M|}$ is a Lebowitz measure it holds for any $i,j \in \Lambda$ satisfying $i \in A$ and $j \notin A$ that
  \begin{align}
    \cov_{\mu_{\Lambda,q,|M|}} (x_i,x_j) & \leq \sum_{\substack{l \in A \\ n \notin A } } |M_{ln}| \Big[ \cov_{\mu_{\Lambda,q,|M|}} (x_i,x_l)  \cov_{\mu_{\Lambda,q,|M|}} (x_n,x_j) \notag \\
    & \qquad \qquad \qquad  + \cov_{\mu_{\Lambda,q,|M|}
} (x_i,x_n) \cov_{\mu_{\Lambda,q,|M|}} (x_l,x_j)\Big]      \label{e_simon_inequality}
  \end{align}
\end{proposition}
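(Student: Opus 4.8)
The plan is to derive the Simon-type inequality~\eqref{e_simon_inequality} directly from the Lebowitz inequality~\eqref{eq:Lebowitz} by a summation-by-parts / integration-by-parts manoeuvre that isolates a single interaction bond crossing the boundary of $A$. First I would record the elementary fact that for the measure $\mu_{\Lambda,q,|M|}$ the covariance $\cov(x_i,x_j)$ depends differentiably on the interaction coefficients $M_{kl}$, and that
\[
  \frac{\partial}{\partial M_{kl}} \int p_i p_j \,\dx\mu_{\Lambda,q,|M|}
  = \cov_{\mu_{\Lambda,q,|M|}}(p_ip_j,\,p_kp_l),
\]
since the interaction enters the density as $\e^{-\sum |M_{kl}|p_kp_l}$ (with an overall minus sign absorbed into the convention). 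The idea, following Simon~\cite[Theorem~3.1]{Simon}, is to switch on the bonds joining $A$ to its complement one parameter $\lambda$ at a time: let $M^{(\lambda)}$ agree with $|M|$ on bonds inside $A$ and inside $A^c$, and equal $\lambda|M_{ln}|$ on each bond with $l\in A$, $n\notin A$. At $\lambda=0$ the sites $i\in A$ and $j\notin A$ sit in decoupled subsystems, so $\cov(x_i,x_j)=0$ because each factor is centered; at $\lambda=1$ we recover $\mu_{\Lambda,q,|M|}$.

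The main computational step is then the fundamental theorem of calculus along $\lambda$:
\[
  \cov_{\mu_{\Lambda,q,|M|}}(x_i,x_j)
  = \int_0^1 \sum_{\substack{l\in A\\ n\notin A}} |M_{ln}|\;
    \cov_{\mu^{(\lambda)}}(p_ip_j,\,p_lp_n)\,\dx\lambda .
\]
Here I would use that $\mu^{(\lambda)}$ is again a ferromagnetic, symmetric-single-site measure of the type covered by Lemma~\ref{p_Lebowitz_inequality} (its interaction matrix has nonnegative entries and the $\psi_{k,q}$ are even), hence it obeys the Lebowitz inequality~\eqref{eq:Lebowitz} with $\mu$ replaced by $\mu^{(\lambda)}$. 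Applying~\eqref{eq:Lebowitz} with the four indices $i,j,l,n$ gives
\[
  \int p_ip_jp_lp_n\,\dx\mu^{(\lambda)}
  \le \cov_{\mu^{(\lambda)}}(x_i,x_j)\cov_{\mu^{(\lambda)}}(x_l,x_n)
   + \cov_{\mu^{(\lambda)}}(x_i,x_l)\cov_{\mu^{(\lambda)}}(x_n,x_j)
   + \cov_{\mu^{(\lambda)}}(x_i,x_n)\cov_{\mu^{(\lambda)}}(x_l,x_j),
\]
and since all single-site potentials are even the odd moments vanish, so $\int p_ip_j\,\dx\mu^{(\lambda)}\int p_lp_n\,\dx\mu^{(\lambda)} = \cov_{\mu^{(\lambda)}}(x_i,x_j)\cov_{\mu^{(\lambda)}}(x_l,x_n)$; subtracting this one term yields
\[
  \cov_{\mu^{(\lambda)}}(p_ip_j,\,p_lp_n)
  \le \cov_{\mu^{(\lambda)}}(x_i,x_l)\cov_{\mu^{(\lambda)}}(x_n,x_j)
   + \cov_{\mu^{(\lambda)}}(x_i,x_n)\cov_{\mu^{(\lambda)}}(x_l,x_j).
\]
Inserting this into the integral representation, and then using the monotonicity of pair correlations in the coupling constants (a second-GKS / Griffiths-type fact, itself a consequence of the machinery behind Lemma~\ref{p:attractive_interact_dominates}, which gives $0\le\cov_{\mu^{(\lambda)}}(x_a,x_b)\le\cov_{\mu_{\Lambda,q,|M|}}(x_a,x_b)$ for $\lambda\le 1$) to replace each $\mu^{(\lambda)}$-covariance on the right by the corresponding $\mu_{\Lambda,q,|M|}$-covariance, the $\lambda$-integral trivializes and~\eqref{e_simon_inequality} drops out.

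The step I expect to be the main obstacle is the justification that one may replace the $\mu^{(\lambda)}$-covariances by the full $\mu_{\Lambda,q,|M|}$-covariances: this needs the monotonicity $\partial_{M_{kl}}\cov(x_a,x_b)\ge 0$ for ferromagnetic couplings, i.e.\ $\cov(x_ax_b, x_kx_l)\ge 0$, which is exactly a second-GKS inequality for these continuous-spin measures. In the finite-range/bounded-spin world this is classical, and in the present setting the Sylvester expansion argument already used in the proof of Lemma~\ref{p_second_GKS} applies verbatim once one checks integrability of the relevant moments via Lemma~\ref{lem:moments}; so the "hard part" is really bookkeeping rather than a new idea. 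An alternative that avoids this entirely is to carry out the interpolation so that at the endpoint one bond is switched off instead: route the argument through Simon's original telescoping over the crossing bonds, which only ever compares $\mu_{\Lambda,q,|M|}$ with measures obtained from it by deleting bonds, for which the needed inequality is again pure second-GKS positivity. Either way the structural input is: differentiate in a coupling, apply Lebowitz to kill the four-point term, use ferromagnetic monotonicity to bound below by zero and above by the full measure, and integrate.
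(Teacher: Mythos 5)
The paper does not prove this proposition; it simply cites \cite[Theorem~3.1]{Simon}. So you are supplying an argument that the paper leaves as a reference, and your argument is the right one: it is precisely Simon's interpolation proof adapted to the continuous-spin setting of this paper. The structure is sound. Interpolate in the bonds crossing $\partial A$, note that the endpoint $\lambda=0$ decouples $i$ from $j$ (and the means vanish by evenness of $\psi_{k,q}$, so the covariance there is zero), differentiate, kill the four-point term with the Lebowitz inequality from Lemma~\ref{p_Lebowitz_inequality}, and use second-GKS monotonicity of pair correlations in the ferromagnetic couplings to replace the $\mu^{(\lambda)}$-covariances by $\mu_{\Lambda,q,|M|}$-covariances before integrating. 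You correctly flag the monotonicity step as the only nontrivial input; it follows from the same Sylvester/GKS machinery the paper already develops for Lemma~\ref{p_second_GKS}, together with the moment bounds of Lemma~\ref{lem:moments} to justify differentiation under the integral, and your alternative of telescoping over bonds (deleting rather than scaling) is equally valid and arguably closer to Simon's original presentation.

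One caveat worth spelling out rather than leaving to a parenthetical ``absorbed into the convention'': with the density literally as written in \eqref{eq:def_mu_q_|M|}, namely $\propto \e^{-\sum_{k,l}|M_{kl}|p_kp_l}$, the off-diagonal part of the exponent is $-|M_{kl}|p_kp_l$, which is antiferromagnetic, and then the differentiation in a crossing coupling gives $\partial_\lambda \E_{\mu^{(\lambda)}}[p_ip_j] = -\sum|M_{ln}|\cov_{\mu^{(\lambda)}}(p_ip_j,p_lp_n)$, so the Lebowitz bound would go the wrong way. The paper is in fact internally inconsistent here: in the proof of Lemma~\ref{p:attractive_interact_dominates}, $\tilde H(p^\Lambda,1)$ works out to $\sum\psi_{k,q}-\sum|M_{kl}|p_kp_l$, which is the genuinely ferromagnetic Hamiltonian, and that is what everything downstream (GKS, Lebowitz, Simon) actually requires. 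So your derivative formula and FTC step are correct only under that intended ferromagnetic sign, and it would strengthen your write-up to say so explicitly rather than hide it. Apart from this (and the harmless factor-of-two overcounting of unordered bonds, which the paper itself is cavalier about), the proof is complete and correct.
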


Now, all the preparations are done and we can turn to the proof of Proposition~\ref{p_right_decay_attract_inter}. \medskip

For the any site $k \in \Lambda$, we define the set $A_k$ by
\begin{align*}
  A_k = \left\{ l \in \Lambda \ | \ |i-l| \leq a \right\},
\end{align*}
for some integer $a>0$ which is chosen large enough later. Let us now fix a pair $i,j \in \Lambda$ that are sufficiently far away i.e. $|i-j| > a$.  Then it holds $j \notin A_i$ and an application of Simon's inequality~\eqref{e_simon_inequality} to the set $A_i$ yields
\begin{align*}
  \cov (x_i,x_j) & \leq \sum_{\substack{k \in A_i \\ n \notin A_i } } |M_{kn}| \Big[ \cov (x_i,x_k)  \cov (x_n,x_j) + \cov (x_i,x_n)
  \cov (x_k,x_j)\Big].
\end{align*}
Here and in the remaining part of this section, we wrote $\cov$ instead of $\cov_{\mu_\Lambda,q,|M| }$ for convenience. We split up the right hand side of the last identity and get 
\begin{align}\notag
  \cov (x_i,x_j) & \leq  \sum_{\substack{k \in A_i \\ n \notin A_i } } |M_{kn}|  \cov (x_i,x_n)  \cov (x_k,x_j) \\
& \qquad + \sum_{\substack{k \in A_i \\ n \notin A_i \\ n \notin A_j } } |M_{kn}| \cov (x_i,x_k) \cov (x_n,x_j) \notag \\
& \qquad + \underbrace{\sum_{\substack{k \in A_i \\ n \notin A_i \\ n \in A_j } } |M_{kn}| \cov (x_i,x_k) \cov (x_n,x_j)}_{=:R_{ij}}. \label{e_def_J_i_j}
\end{align}
It turns out that the term $R_{ij}$ decays nicely in $|i-j|$. Indeed, observe that 
\begin{align*}
  k \in A_i \quad \mbox{and} \quad  n \in A_j && \Rightarrow && |k-n| \geq |i-j|-2a.
\end{align*}
Therefore one gets for $|i-j|$ large enough the estimate
\begin{align*}
   R_{ij} & \overset{\eqref{e_decay_ising}}{\leq} \sum_{\substack{k \in A_i \\ n \notin A_i \\ n \in A_j } } \frac{C}{|k-n|^{2+\alpha} +1}  \cov (x_i,x_k) \cov (x_n,x_j) \\
   & \leq \frac{C}{|i-j|^{2+\alpha} +1} \sum_{\substack{k \in A_i \\ n \notin A_i \\ n \in A_j } }  \cov (x_i,x_k) \cov (x_n,x_j).
\end{align*}
Observe that
\begin{align}
  |\cov(x_n,x_j)| & =\left| \int x_n x_j \mu_{\Lambda,q,|M|}\right|  \notag \\
 &\leq \left(  \int |x_n|^2 \mu_{\Lambda,q,|M|}\right)^{\frac{1}{2}} \left(  \int |x_j|^2 \mu_{\Lambda,q,|M|}\right)^{\frac{1}{2}} \overset{\eqref{e:arbitrary_moment}}{\leq} C, \label{e_uniform_two_point_correlation_est}
\end{align}
 uniformly in $n$ and $j$. So using~\eqref{e_uniform_two_point_correlation_est} yields that
\begin{align}
   R_{ij} \leq \frac{C}{|i-j|^{2+\alpha} +1} \sum_{\substack{k \in A_i \\ n \notin A_i \\ n \in A_j } } 1  \leq 2a \ \frac{C}{|i-j|^{2+\alpha} +1} . \label{e_est_remainder_R_i_j}
\end{align}
Therefore, it is only left to estimate the first and the second sum on the right hand side of~\eqref{e_def_J_i_j}. This estimate turns out to be a little bit subtle and is done via a recursive scheme. Before setting up the recursion let us rewrite the first and the second sum of~\eqref{e_def_J_i_j}. Using the fact that $k \in A_i \Rightarrow k \notin A_j$, relabeling $k$ and $n$ in the second sum and considering $M_{kn}=M_{nk}$ yields
\begin{align} \notag
\cov (x_i,x_j) & \leq \sum_{\substack{k \in A_i \\ n \notin A_i \\ k \notin A_j} } |M_{kn}|  \cov (x_i,x_n)  \cov (x_k,x_j) \\
& \qquad + \sum_{\substack{n \in A_i \\ k \notin A_i \\ k \notin A_j } } |M_{kn}| \cov (x_i,x_n) \cov (x_k,x_j) \notag \\
& \qquad + R_{ij}. \label{e_d_remainder_cov}
\end{align}
For $k \neq i$ we define the coefficients
\begin{align} \label{e_def_tilde_J}
  \ J_{ik} =
  \begin{cases}
        \sum_{n \notin A_i} |M_{kn}| \cov(x_i, x_n), & \mbox{for } k \in A_i,\\
    \sum_{n \in A_i} |M_{kn}| \cov(x_i, x_n), & \mbox{for } k \notin A_i.
  \end{cases}
\end{align}
With this notation the estimate~\eqref{e_d_remainder_cov} can be written as
\begin{align}\label{e_recursive_rewritten}
  \cov (x_i,x_j) \leq \sum_{k \notin A_j } \ J_{ik}  \cov (x_k,x_j) + R_{ij},
\end{align}
which holds for every $j \notin A_i$. Because $k \notin A_j$ one can iteratively apply the estimate~\eqref{e_recursive_rewritten} and get the estimate
\begin{align*} 
  \cov (x_i,x_j) & \leq \sum_{k_{1} \notin A_j } \ J_{ik_{1}}  \cov (x_{k_{1}},x_j) + R_{ij} \\
  & \leq \sum_{k_{1} \notin A_j } \ J_{ik_1}  \left( \sum_{k_2 \notin A_j } \ J_{k_1 k_2}  \cov (x_{k_2},x_j) + R_{k_1 j}\right) + R_{ij} \\
 & = \sum_{ \substack{ k_{1} \notin A_j \\ k_2 \notin A_j} } \ J_{i k_1}  \ J_{k_1 k_2}  \cov (x_{k_2},x_j)  + \sum_{ k_{1} \notin A_j  } \ J_{i k_1}  R_{k_1 j} + R_{i j}
\end{align*}
After $l$-many iterations we get
\begin{align}
  \cov (x_i,x_j)  &
  \left. \begin{aligned}[t]
    = \sum_{ k_{1}, \ldots k_l \notin A_j } \ J_{i k_1} \
    J_{k_1 k_2} \cdots \ J_{k_{l-1} k_l} \cov (x_{k_l},x_j)
  \end{aligned} \right\} =: T 
 \notag  \\
  \label{e_covariance_recursion} &  \left.  \begin{aligned}
    & \qquad + \sum_{ k_{1}, \ldots k_{l-1} \notin A_j  } \ J_{i k_1} \cdots \ J_{k_{l-2} k_{l-1}}  R_{k_{l-1} j} \\
    & \qquad + \sum_{ k_{1}, \ldots k_{l-2} \notin A_j  } \ J_{i k_1} \cdots \ J_{k_{l-3} k_{l-2}}  R_{k_{l-2} j}\\
    & \qquad +\cdots \cdots + R_{i j}.
  \end{aligned} \right\} =: R 
\end{align}
Firstly, let us estimate the term $T$. For this purpose we need the auxiliary observation that
\begin{align}
  \label{e_est_coeff_tilde_J} 
  \sum_k \ J_{lk} \leq \tilde c <1
\end{align}
for some constant~$\tilde c >0$ by choosing the free parameter $a$ sufficiently large. Indeed, we can split up the sum into two parts, namely
\begin{align}\label{e_aux_est_coeff_tilde_J}
  \sum_k \ J_{lk} = \sum_{k \in A_i} \ J_{lk} + \sum_{k \notin A_i} \ J_{lk} .
\end{align}
We firstly estimate the sum $\sum_{k \in A_l} \ J_{lk}$. From the definition~\eqref{e_def_tilde_J} of~$\ J_{lk}$ we get
\begin{align*}
  \sum_{k \in A_l} \ J_{lk} = \sum_{k \in A_l}     \sum_{n \notin A_l} |M_{kn}| \cov(x_l, x_n)  .
\end{align*}
Using the definition of $A_l$, noting that $n \notin A_l$ and the preliminary decay~\eqref{e_prelim_decay_corr} of $\cov(x_l,x_n)$ we get  
\begin{align*}
  \sum_{k \in A_l} \ J_{lk} \leq  C \frac{1}{|a|^{\tilde \alpha}}\sum_{k \in A_l}     \sum_{n \notin A_l} |M_{kn}|  .
\end{align*}
Using now the decay~\eqref{e_decay_ising} of the interaction i.e.~$|M_{kn}| \lesssim \frac{1}{|k-n|^{2 + \alpha}+1}  $ yields
\begin{align}
  \sum_{k \in A_l} \ J_{lk} & \leq  \frac{1}{|a|^{\tilde \alpha}}\sum_{k \in A_l}     \sum_{n \notin A_l} \frac{C}{|k-n|^{2 + \alpha}+1}  \notag\\
  & \leq \frac{C}{|a|^{\tilde \alpha}} < \frac{1}{2}, \label{e_estimate_first_critical_term}
\end{align}
if we choose the free constant $a$ large enough.\newline
Now, let us turn to the estimation of the second sum in~\eqref{e_aux_est_coeff_tilde_J}. From the definition~\eqref{e_def_tilde_J} of~$\ J_{lk}$ we get
\begin{align}
  \sum_{k \notin A_l} \ J_{lk} & = \sum_{k \notin A_l}     \sum_{n \in A_l} |M_{kn}| \cov(x_l, x_n) \notag \\
  & = \sum_{k \notin A_l} \   \ \sum_{n \in A_l, |n-l| \leq \frac{a}{2}} |M_{kn}| \cov(x_l, x_n)  \notag \\
  & \qquad + \sum_{k \notin A_l}\  \    \sum_{n \in A_l, |n-l| > \frac{a}{2}} |M_{kn}| \cov(x_l, x_n) . \label{e_splitting_critical_term}
\end{align}
We consider the first sum on the right hand side of the last identity. Note that for $k \notin A_l$ 
\begin{align*}
  | n - l  | \leq \frac{a}{2} \quad \Rightarrow \quad  |k-n| \geq \frac{a}{2}.
\end{align*}
 Therefore we get
\begin{align}
  \sum_{k \notin A_l} \   \ \sum_{n \in A_l, |n-l| \leq \frac{a}{2}} |M_{kn}| \cov(x_l, x_n)    & \overset{\eqref{e_uniform_two_point_correlation_est}}{\leq} C \sum_{n \in A_l, |n-l| \leq \frac{a}{2}} |M_{kn}| \notag \\
  & \overset{~\eqref{e_decay_ising}}{\leq} C \sum_{n \in A_l, |n-l| \leq \frac{a}{2}} \frac{1}{|k-n|^{2+ \alpha}} \notag \\
  & \leq    C \frac{1}{a^{\frac{\alpha}{2}}}\sum_{n \in A_l, |n-l| \leq \frac{a}{2}} \frac{1}{|k-n|^{2+ \frac{\alpha}{2}}}  \notag \\
  & < \frac{1}{4}, \label{e_est_critical_term_1}
\end{align}
if we choose the free parameter $a$ sufficiently large. Let us now consider the second term on the right hand side of the equation~\eqref{e_splitting_critical_term}. Using the preliminary decay~\eqref{e_prelim_decay_corr} of correlations we get
\begin{align}
  \sum_{k \notin A_l}\  \    \sum_{n \in A_l, |n-l| > \frac{a}{2}} |M_{kn}| \cov(x_l, x_n) & \leq \frac{C}{a^{\tilde \alpha}} \sum_{n,k} |M_{kn}| < \frac{1}{4} \label{e_est_critical_term_2}
 \end{align}
Overall, a combination of~\eqref{e_est_critical_term_1} and~\eqref{e_est_critical_term_2} yields 
\begin{align*}
    \sum_{k \notin A_l} \ J_{lk}  < \frac{1}{2},
\end{align*}
which together with~\eqref{e_aux_est_coeff_tilde_J} and~\eqref{e_estimate_first_critical_term} implies the desired the estimate~\eqref{e_est_coeff_tilde_J}. \smallskip

Let us return to the estimation of the term $T$ of~\eqref{e_covariance_recursion}. Using the estimate~\eqref{e_uniform_two_point_correlation_est} and~\eqref{e_est_coeff_tilde_J} we get 
\begin{align*}
  T \leq C_1 \ \tilde c^l 
\end{align*}
for some constant $0 < \tilde c < 1$. If we choose $l$ large enough this estimate clearly yields the desired estimate
\begin{align} \label{e_est_T_cov_recur}
  T \leq \  C_1 \ \tilde c^l \ \leq C_2 \frac{1}{|i-j|^{2+\hat \alpha}}.
\end{align}
Because the estimate for the remainder term $R$ of of~\eqref{e_covariance_recursion} will behave bad in the the number of iterations $l$ (see estimates from below), we have to choose at the same time the number of iterations as small as possible. Note that
\begin{align*}
 C_1 \ \tilde c^l  \ \leq C_2 \frac{1}{|i-j|^{2+\hat \alpha}}  && \Leftrightarrow && \exp\left( l \log \tilde c \right) \leq \frac{C_2}{C_1} \ \frac{1}{|i-j|^{2+\hat \alpha}} \\
&& \Leftrightarrow &&  l \log \tilde c  \leq \log \frac{C_2}{C_1}  + \log \frac{1}{|i-j|^{2+\hat \alpha}} \\
&& \Leftrightarrow &&  l  \geq  \frac{1}{\log \tilde c} \log \frac{C_2}{C_1}  + \frac{1}{\log \tilde c}\log \frac{1}{|i-j|^{2+\hat \alpha}}.
\end{align*}
Because we can choose $C_1 \leq C_2$ we can choose the number of iterations $l$ to be the smallest integer larger than $\left| \frac{1}{\log \tilde c} \right| \left| \log \frac{1}{|i-j|^{2+\hat \alpha}}\right|$ i.e. 
\begin{align} \label{e_how_to_choose_l}
  l \sim \underbrace{\left| \frac{1}{\log \tilde c} \right|}_{=: C_3} \left| \log \frac{1}{|i-j|^{2+\hat \alpha}}\right|, 
\end{align}
and the estimate~\eqref{e_est_T_cov_recur} is still valid. \medskip

It is only left to estimate of the remainder term $R$ of~\eqref{e_covariance_recursion}. Firstly, let us estimate the term  
\begin{align*}
  \sum_{ k_{1}, \ldots k_{l-1} \notin A_j  } \ J_{i k_1} \cdots \ J_{k_{l-2} k_{l-1}}  R_{k_{l-1} j}.
\end{align*}
For any sequence of indexes $i, k_1, k_2, \ldots k_{l-1}, j$ there exists at least one pair $k_s, k_{s+1}$ such that 
\begin{align}
  |k_s - k_{s+1}| \geq \frac{|i-j|}{l}. \label{e_cond_dist_rec_cov}
\end{align}
Because we have chosen $l$ according to~\eqref{e_how_to_choose_l} this yields
\begin{align*}
  |k_s - k_{s+1}| \geq \frac{|i-j|}{ C_3 \left| \log \frac{1}{|i-j|^{2+\hat \alpha}}\right|} \geq |i-j|^{1-\varepsilon} \geq a,
\end{align*}
if we choose $|i-j|$ sufficiently large with respect to $ \varepsilon>0$. Hence it holds that $k_s \notin A_{k_{s+1}}$ and $k_{s+1} \notin A_{k_s}$. So one can estimate  
\begin{align*}
   \ J_{k_s k_{s+1}} & \overset{\eqref{e_def_tilde_J}}{=}  \sum_{n \in A_{k_s}} |M_{ k_{s+1} n}| \cov(x_{k_s}, x_n) \\
   & \overset{\eqref{e_uniform_two_point_correlation_est}}{\leq}  C  \sum_{n \in A_{k_s}} |M_{ k_{s+1} n}|
\end{align*}
Note that $n \in A_{k_s}$ yields 
\begin{align*}
  |k_{s+1} - n| \geq |k_{s+1} - k_s| - a.
\end{align*}
Therefore we get the estimate
\begin{align}
   \ J_{k_s k_{s+1}} & \leq  C a  \ \frac{1}{\left(|k_{s+1} - k_s| - a\right)^{2+\alpha}+1} \notag \\
   & \leq C a \ \frac{1}{\left(|i-j|^{1-\varepsilon} - a\right)^{2+\alpha}+1} \notag \\
   & \leq 2 C a \frac{1}{\left(|i-j|^{1-\varepsilon} \right)^{2+\alpha}+1} \notag \\
   & \leq 2 C a \frac{1}{|i-j|^{2+\hat \alpha}+1} \label{e_est_tilde_J}
\end{align}
for some $0 < \hat \alpha < \alpha$, if we choose $|i-j|$ large enough.\newline
Let us now turn to the estimation of 
\begin{align*}
  \sum_{ k_{1}, \ldots k_{l-1} \notin A_j  } \ J_{i k_1} \cdots \ J_{k_{l-2} k_{l-1}}  R_{k_{l-1} j} \leq \sum_{ k_{1}, \ldots k_{l-1} } \ J_{i k_1} \cdots \ J_{k_{l-2} k_{l-1}}  R_{k_{l-1} j} .
\end{align*}
Because at least one pair~$k_s, k_{s+1}$ satisfies the condition~\eqref{e_cond_dist_rec_cov} we have
\begin{align}
&  \sum_{ k_{1}, \ldots k_{l-1} } \ J_{i k_1} \cdots \ J_{k_{l-2} k_{l-1}}  R_{k_{l-1} j} \label{e_decomp_trick_large_jumps}   \\ & \leq \sum_{ \substack{ k_{1}, \ldots k_{l-1} \\ \mbox{\tiny $i$ and $k_1$ satisfy~\eqref{e_cond_dist_rec_cov}}} } \ J_{i k_1} \cdots \ J_{k_{l-2} k_{l-1}}  R_{k_{l-1} j} \notag \\
  & \quad + \sum_{ \substack{ k_{1}, \ldots k_{l-1} \\ \mbox{\tiny $k_1$ and $k_2$ satisfy~\eqref{e_cond_dist_rec_cov}}} } \ J_{i k_1} \cdots \ J_{k_{l-2} k_{l-1}}  R_{k_{l-1} j} \notag \\
& \quad + \ldots +  \sum_{ \substack{ k_{1}, \ldots k_{l-1} \\ \mbox{\tiny $k_{l-1}$ and $j$ satisfy~\eqref{e_cond_dist_rec_cov}}} } \ J_{i k_1} \cdots \ J_{k_{l-2} k_{l-1}}  R_{k_{l-1} j} . \notag
\end{align}
Each term on the right hand side can be estimated in the same way. For example let us estimate the first term. We have
\begin{align*}
&  \sum_{ \substack{ k_{1}, \ldots k_{l-1} \\ \mbox{\tiny $i$ and $k_1$
        satisfy~\eqref{e_cond_dist_rec_cov}}} } \ J_{i k_1} 
  \cdots \ J_{k_{l-2} k_{l-1}} R_{k_{l-1} j} \\
& \qquad \overset{\eqref{e_est_remainder_R_i_j}}{\leq}  C a^2 \sum_{ \substack{ k_{1}, \ldots k_{l-1} \\ \mbox{\tiny $i$ and $k_1$
        satisfy~\eqref{e_cond_dist_rec_cov}}} } \ J_{i k_1} 
  \cdots \ J_{k_{l-2} k_{l-1}} \\
& \qquad \overset{\eqref{e_est_tilde_J}}{\leq}  Ca^2  \frac{1}{|i-j|^{2+\hat \alpha}+1}  \sum_{  k_{1}, \ldots k_{l-1} } \ J_{k_1 k_2} 
  \cdots \ J_{k_{l-2} k_{l-1}} \\
& \qquad \overset{\eqref{e_est_coeff_tilde_J} }{\leq}  C a^2 \frac{1}{|i-j|^{2+\hat \alpha}+1}  .
\end{align*}
Every term on the right hand side of~\eqref{e_decomp_trick_large_jumps} can be estimated in this way. Only for the last term one has to use the decay~\eqref{e_est_remainder_R_i_j} of $R_{k_{l-1}j}$ instead of the decay~\eqref{e_est_tilde_J}  of $J_{k_s k_{s+1}}$. So overall one obtains the estimate 
  \begin{align*}
 & \sum_{ k_{1}, \ldots k_{l-1} \notin A_j  } \ J_{i k_1} \cdots \ J_{k_{l-2} k_{l-1}}  R_{k_{l-1} j}   \leq C a^2 l  \ \frac{1}{|i-j|^{2+\hat \alpha}+1}. 
\end{align*}
This estimate and analog estimates for the other terms of $R$ given by~\eqref{e_covariance_recursion} accumulates in the following estimation of $R$ i.e.
\begin{align*}
  R  & \leq \sum_{\tilde l =1}^l \tilde l C a^2 \ \frac{1}{|i-j|^{2+\hat \alpha}+1} \\ 
  & \leq C a^2 \ \frac{l (l+1)}{2} \frac{1}{|i-j|^{\frac{\hat \alpha}{2}}} \  \frac{1}{|i-j|^{2+\frac{\hat \alpha}{2}+1}}.
\end{align*}
Note that by our choice of $l$ (see~\eqref{e_how_to_choose_l}) we have 
\begin{align*}
  \frac{l (l+1)}{2} \frac{1}{|i-j|^{\frac{\hat \alpha}{2}}} \leq C,
\end{align*}
if we choose $|i-j|$ large enough. Therefore we obtained the desired estimate for the remainder term $R$ i.e.
\begin{align*}
  R \leq C a^2  \frac{1}{|i-j|^{2+ \frac{\hat \alpha}{2}}+1}.
\end{align*}
This completes the proof of Proposition~\ref{p_right_decay_attract_inter} and therefore also the proof of Theorem~\ref{p_mr_decay}.

\begin{acknowledgement}
The first author wants to thank Maria Westdickenberg (ne\'e Reznikoff), Felix Otto, Nobuo Yoshida, and Chris Henderson for the fruitful and inspiring discussions on this topic. The second author is grateful to Felix Otto for bringing this subject
to his attention. Both authors are indebted to the \emph{Max-Planck Institute for Mathematics in the Sciences} in Leipzig for funding during the years 2010 to 2012, where most of the content of this article originated.
\end{acknowledgement}

\bibliographystyle{amsalpha}

\bibliography{covariance_1d}

\end{document}